\def\ge {\geqslant}
\def\le {\leqslant}
\begin{document}

\newtheorem{theorem}{Theorem}
\newtheorem{lemma}[theorem]{Lemma}
\newtheorem{claim}[theorem]{Claim}
\newtheorem{cor}[theorem]{Corollary}
\newtheorem{conj}[theorem]{Conjecture}
\newtheorem{prop}[theorem]{Proposition}
\newtheorem{definition}[theorem]{Definition}
\newtheorem{question}[theorem]{Question}
\newtheorem{example}[theorem]{Example}
\newcommand{\hh}{{{\mathrm h}}}
\newtheorem{remark}[theorem]{Remark}

\numberwithin{equation}{section}
\numberwithin{theorem}{section}
\numberwithin{table}{section}
\numberwithin{figure}{section}

\def\sssum{\mathop{\sum\!\sum\!\sum}}
\def\ssum{\mathop{\sum\ldots \sum}}
\def\iint{\mathop{\int\ldots \int}}

\newcommand{\diam}{\operatorname{diam}}

\def\squareforqed{\hbox{\rlap{$\sqcap$}$\sqcup$}}
\def\qed{\ifmmode\squareforqed\else{\unskip\nobreak\hfil
\penalty50\hskip1em \nobreak\hfil\squareforqed
\parfillskip=0pt\finalhyphendemerits=0\endgraf}\fi}

\newfont{\teneufm}{eufm10}
\newfont{\seveneufm}{eufm7}
\newfont{\fiveeufm}{eufm5}
%
%
\newfam\eufmfam
     \textfont\eufmfam=\teneufm
\scriptfont\eufmfam=\seveneufm
     \scriptscriptfont\eufmfam=\fiveeufm
%
%
\def\frak#1{{\fam\eufmfam\relax#1}}

\newcommand{\bflambda}{{\boldsymbol{\lambda}}}
\newcommand{\bfmu}{{\boldsymbol{\mu}}}
\newcommand{\bfxi}{{\boldsymbol{\xi}}}
\newcommand{\bfrho}{{\boldsymbol{\rho}}}

\def\eps{\varepsilon}

\def\fK{\mathfrak K}
\def\fT{\mathfrak{T}}

\def\fI{{\mathfrak I}}
\def\fA{{\mathfrak A}}
\def\fB{{\mathfrak B}}
\def\fC{{\mathfrak C}}
\def\fI{{\mathfrak I}}
\def\fM{{\mathfrak M}}
\def\fQ{{\mathfrak  Q}}
\def\fS{{\mathfrak  S}}
\def\fU{{\mathfrak U}}

\def\T {\mathsf {T}}
\def\Tor{\mathsf{T}_d}
\def\Tore{\widetilde{\mathrm{T}}_{d} }

\def\sM {\mathsf {M}}

\def\Kmnd{\cK_d(m,n)}
\def\Kmnp{\cK_p(m,n)}
\def\Kmnq{\cK_q(m,n)}

\def \balpha{\bm{\alpha}}
\def \bbeta{\bm{\beta}}
\def \bgamma{\bm{\gamma}}
\def \bdelta{\bm{\delta}}
\def \blambda{\bm{\lambda}}
\def \bchi{\bm{\chi}}
\def \bphi{\bm{\varphi}}
\def \bpsi{\bm{\psi}}
\def \bnu{\bm{\nu}}
\def \bomega{\bm{\omega}}

\def \bxi{\bm{\xi}}

\def \bell{\bm{\ell}}

\def\eqref#1{(\ref{#1})}

\def\vec#1{\mathbf{#1}}

\newcommand{\abs}[1]{\left| #1 \right|}

\def\Zq{\mathbb{Z}_q}
\def\Zqx{\mathbb{Z}_q^*}
\def\Zd{\mathbb{Z}_d}
\def\Zdx{\mathbb{Z}_d^*}
\def\Zf{\mathbb{Z}_f}
\def\Zfx{\mathbb{Z}_f^*}
\def\Zp{\mathbb{Z}_p}
\def\Zpx{\mathbb{Z}_p^*}
\def\cM{\mathcal M}
\def\cE{\mathcal E}
\def\cH{\mathcal H}

\def\sfB{\mathsf {B}}
\def\sfC{\mathsf {C}}
\def\L{\mathsf {L}}
\def\FF{\mathsf {F}}

\def\sD {\mathscr{D}}
\def\sE {\mathscr{E}}
\def\sS {\mathscr{S}}

\def\cA{{\mathcal A}}
\def\cB{{\mathcal B}}
\def\cC{{\mathcal C}}
\def\cD{{\mathcal D}}
\def\cE{{\mathcal E}}
\def\cF{{\mathcal F}}
\def\cG{{\mathcal G}}
\def\cH{{\mathcal H}}
\def\cI{{\mathcal I}}
\def\cJ{{\mathcal J}}
\def\cK{{\mathcal K}}
\def\cL{{\mathcal L}}
\def\cM{{\mathcal M}}
\def\cN{{\mathcal N}}
\def\cO{{\mathcal O}}
\def\cP{{\mathcal P}}
\def\cQ{{\mathcal Q}}
\def\cR{{\mathcal R}}
\def\cS{{\mathcal S}}
\def\cT{{\mathcal T}}
\def\cU{{\mathcal U}}
\def\cV{{\mathcal V}}
\def\cW{{\mathcal W}}
\def\cX{{\mathcal X}}
\def\cY{{\mathcal Y}}
\def\cZ{{\mathcal Z}}
\newcommand{\rmod}[1]{\: \mbox{mod} \: #1}

\def\cg{{\mathcal g}}

\def\vr{\mathbf r}
\def\vx{\mathbf x}
\def\va{\mathbf a}
\def\vb{\mathbf b}
\def\vc{\mathbf c}
\def\vh{\mathbf h}
\def\vk{\mathbf k}
\def\vm{\mathbf m}
\def\vz{\mathbf z}
\def\vu{\mathbf u}
\def\vv{\mathbf v}

\def\e{{\mathbf{\,e}}}
\def\ep{{\mathbf{\,e}}_p}
\def\eq{{\mathbf{\,e}}_q}

\def\Tr{{\mathrm{Tr}}}
\def\Nm{{\mathrm{Nm}}}

 \def\SS{{\mathbf{S}}}

\def\lcm{{\mathrm{lcm}}}

 \def\0{{\mathbf{0}}}

\def\({\left(}
\def\){\right)}
\def\l|{\left|}
\def\r|{\right|}
\def\fl#1{\left\lfloor#1\right\rfloor}
\def\rf#1{\left\lceil#1\right\rceil}
\def\sumstar#1{\mathop{\sum\vphantom|^{\!\!*}\,}_{#1}}

\def\mand{\qquad \mbox{and} \qquad}

\def\tblue#1{\begin{color}{blue}{{#1}}\end{color}}

%
%
%
%
%
%




\hyphenation{re-pub-lished}

\mathsurround=1pt

\def\bfdefault{b}

\def \F{{\mathbb F}}
\def \K{{\mathbb K}}
\def \N{{\mathbb N}}
\def \Z{{\mathbb Z}}
\def \Q{{\mathbb Q}}
\def \R{{\mathbb R}}
\def \C{{\mathbb C}}
\def\Fp{\F_p}
\def \fp{\Fp^*}

 \def \xbar{\overline x}

\title[Large Values of Weyl Sums]
{On Large Values of Weyl Sums}  

 \author[C. Chen] {Changhao Chen}

\address{Department of Pure Mathematics, University of New South Wales,
Sydney, NSW 2052, Australia}
\email{changhao.chenm@gmail.com}

 \author[I. E. Shparlinski] {Igor E. Shparlinski}

\address{Department of Pure Mathematics, University of New South Wales,
Sydney, NSW 2052, Australia}
\email{igor.shparlinski@unsw.edu.au}

\begin{abstract}  A special case of the Menshov--Rademacher theorem implies 
for almost all  polynomials $x_1Z+\ldots +x_d Z^{d} \in \R[Z]$ of degree $d$
for  the Weyl sums satisfy  the  upper bound 
$$
\left| \sum_{n=1}^{N}\exp\(2\pi i \(x_1 n+\ldots +x_d n^{d}\)\) \right| \leqslant N^{1/2+o(1)}, \qquad N\to \infty. 
$$
Here we investigate  the exceptional sets of coefficients $(x_1, \ldots, x_d)$ with large values of Weyl sums
for infinitely many $N$, and show that  in terms 
of the Baire categories and Hausdorff dimension they are quite massive, in particular of positive 
Hausdorff dimension in any fixed cube inside of  $[0,1]^d$. 
We also use a different technique to give similar results for sums with just one monomial $xn^d$. 
We apply these results to show that the set  of poorly distributed modulo one
polynomials   is rather massive as well. 
\end{abstract}

\keywords{Weyl sum, exceptional set, Vinogradov mean value theorem, rational exponential sums, Baire category, Hausdorff dimension}
\subjclass[2010]{11K38, 11L15, 28A78, 28A80}

\maketitle

\tableofcontents

\section{Introduction}

\subsection{Motivation}

Here we consider a new type of problems of metric number theory where the vectors 
of real numbers are classified by the size of the corresponding Weyl sums given by~\eqref{eq:WeylSum} 
below,  rather than by their Diophantine approximation properties as in the classical settings, see~\cite{Bug,Harm}.   

Clearly
both points of view are ultimately related and  operated in similar notions such as the Lebesgue measure and Hausdorff dimension.
They are also both related to the question of uniformity of distribution  modulo one of fractional parts of real polynomials.  
However,  our study of sets of large Weyl sums also uses several new ideas and technics.  We believe that these ideas 
and concrete results on such  a very powerful and versatile tool as exponential sums can find applications
to other problems.  In particular, in Section~\ref{sec:ud mod1} below we give one of such applications and show that the set 
of polynomials which are poorly distributed modulo one is rather massive (in fact, our results are quantitative and thus more precise). 

In problems of this kind, the case $d\ge 3$ is much harder than the case $d=2$. The main reason is that   Lemma~\ref{lem:Gauss} below, 
giving an exact size of Gauss sums, 
which we have  for the case $d=2$,  does not   in general have any analogues for $d\ge 3$, see also  Remark~\ref{rem:why1} below.

\subsection{Set-up and background}
We now  describe our main objects of study. 

For an integer $d \geqslant 2$, let $\Tor = (\R/\Z)^d$ denote  the  $d$-dimensional unit torus. 

For  a vector $\vx = (x_1, \ldots, x_d)\in \Tor$ and $N \in\N$, we consider the exponential   
sums
\begin{equation}
\label{eq:WeylSum} 
S_d(\vx; N)=\sum_{n=1}^{N}\e\(x_1 n+\ldots +x_d n^{d} \), 
\end{equation}
which  are commonly called {\it Weyl sums\/}, where   throughout  the paper we denote $\e(x)=\exp(2 \pi i x)$. 
From the {\it Parseval identity\/}   
$$
\int_{\Tor} |S_d(\vx; N)|^{2} d \vx= N
$$
one immediately concludes that for any fixed $\alpha> 0$ the set of 
$\vx \in \Tor$ with $|S_d(\vx; N)|  \geqslant N^{\alpha}$ is of {\it Lebesgue measure\/} at most $N^{1-2\alpha}$, which is nontrivial when $1/2<\alpha<1$.

Furthermore, from  the {\it Vinogradov mean value theorem\/}, in the currently known form
$$
\int_{\Tor} |S_d(\vx; N)|^{2s(d)}d\vx \leqslant  N^{s(d)+o(1)}\qquad \text{as} \ N \to \infty,
$$
where $s(d) = d(d+1)/2$, due to Bourgain, Demeter and Guth~\cite{BDG} (for $d \geqslant 4$) 
and Wooley~\cite{Wool2} (for $d=3$)  (see also a more general form due to Wooley~\cite{Wool5}),   
 one can derive a much stronger  bound $N^{s(d)(1-2\alpha) + o(1)}$ when $1/2<\alpha<1$.
 
In fact, a  special case of the {\it Menshov--Rademacher theorem\/}, see~\cite[p.~251]{KaSa},
  implies that      for almost all $\vx\in \Tor$ (with respect to the Lebesgue measure) we have  
 \begin{equation}
\label{eq:M-R}
  |S_d(\vx; N)| \leqslant  N^{1/2}(\log N)^{3/2+o(1)}, \qquad \text{as} \ N \to \infty .
\end{equation} 
For completeness we give a proof of~\eqref{eq:M-R}  in Appendix~\ref{app:A}.

Hence if  for $0<\alpha<1$  we  define the  set 
$$
\cE_{\alpha, d}=\{\vx\in \Tor:~|S_d(\vx; N)|\geqslant N^{\alpha} \text{ for infinitely many } N\in \N\},
$$ 
and  define
$$
\vartheta_d =\inf \{\alpha>0:~\lambda(\cE_{\alpha, d})=0\}, 
$$
 where we use  $\lambda(\cA)$ to denote the  {\it Lebesgue measure\/} of $\cA \subseteq \Tor$, 
then  by~\eqref{eq:M-R}   we have 
 $$
\vartheta_d \leqslant 1/2.
$$
In fact we make:

\begin{conj}\label{conj:1/2}  
For each integer $d\geqslant 2$ we have  
$$
\vartheta_d =1/2. 
$$
\end{conj}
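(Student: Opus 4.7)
My plan is to prove Conjecture~\ref{conj:1/2} by establishing the matching lower bound $\vartheta_d \geq 1/2$; combined with the already-recorded Menshov--Rademacher upper bound $\vartheta_d \leq 1/2$ this gives $\vartheta_d = 1/2$. Concretely, I will show that $\lambda(\cE_{\alpha, d}) \geq 1/8$ for every $\alpha < 1/2$, via a Paley--Zygmund (second moment) argument powered by an \emph{exact} evaluation of the fourth moment of $S_d(\vx; N)$.

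First I would expand $|S_d(\vx; N)|^4$ and integrate term by term to obtain
$$
\int_{\Tor} |S_d(\vx; N)|^4\, d\vx = \#\bigl\{(n_1, n_2, n_3, n_4) \in \{1, \ldots, N\}^4 : n_1^j + n_2^j = n_3^j + n_4^j,\ 1 \leq j \leq d\bigr\}.
$$
For $d \geq 2$ the equations for $j = 1$ and $j = 2$ already determine $n_1 + n_2$ and $n_1 n_2$, hence (via Newton's identities) the multiset $\{n_1, n_2\}$, so they alone force $\{n_1, n_2\} = \{n_3, n_4\}$ as multisets and the higher-degree constraints add nothing. A direct count of ordered representations of each multiset then yields the exact identity
$$
\int_{\Tor} |S_d(\vx; N)|^4\, d\vx = 2N^2 - N.
$$

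Next I would apply the Paley--Zygmund inequality to $Z = |S_d(\cdot; N)|^2$. Parseval gives $\int_{\Tor} Z\, d\vx = N$, and the previous step gives $\int_{\Tor} Z^2\, d\vx \leq 2N^2$, so taking $\theta = 1/2$ produces
$$
\lambda\bigl\{\vx \in \Tor : |S_d(\vx; N)| \geq \sqrt{N/2}\bigr\} \geq \frac{(1/2)^2\, N^2}{2N^2} = \frac{1}{8}
$$
for every $N \geq 1$. For any fixed $\alpha < 1/2$ and all $N$ large enough, $N^\alpha < \sqrt{N/2}$, so the level set $A_N := \{\vx : |S_d(\vx; N)| \geq N^\alpha\}$ also satisfies $\lambda(A_N) \geq 1/8$. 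The reverse Fatou inequality on the finite measure space $(\Tor, \lambda)$ then gives
$$
\lambda(\cE_{\alpha, d}) = \lambda\bigl(\limsup_N A_N\bigr) \geq \limsup_N \lambda(A_N) \geq \frac{1}{8},
$$
which is positive and hence yields $\vartheta_d \geq 1/2$.

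The only step with any real content is the exact fourth-moment identity; everything after it is a routine Paley--Zygmund plus reverse Fatou extraction that works uniformly in $d$, bypassing the $d \geq 3$ difficulty flagged in the introduction (no analogue of the exact Gauss-sum formula is required, because the multiset collapse is already forced by the $j = 1, 2$ equations alone). I do not expect a serious obstacle for the statement as phrased. However, if one wished to upgrade the conclusion from ``$\lambda(\cE_{\alpha, d}) > 0$'' to ``$\lambda(\cE_{\alpha, d}) = 1$'', the hard part would be controlling the correlations $\lambda(A_N \cap A_M)$ between level sets at well-separated scales, via either a near-independence estimate for Weyl sums or a suitable zero--one law; that is presumably where the genuine subtleties lie, and it is where I would expect to spend essentially all of the effort.
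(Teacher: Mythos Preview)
Your argument is correct and, notably, settles what the paper poses as an \emph{open conjecture}: the paper does not prove Conjecture~\ref{conj:1/2} but only sketches an approach in Section~6.4 based on the second Borel--Cantelli lemma, which would require quasi-independence of the level sets $\cA_{d,\alpha,i}=\{\vx:|S_d(\vx;i)|\ge i^\alpha\}$ --- a step the authors explicitly leave unresolved.

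Your route is genuinely different and simpler. The paper's Section~6.4 combines Parseval with the trivial bound $|S_d(\vx;N)|\le N$ to get only $\lambda(\cA_{d,\alpha,N})\gg 1/N$; this produces a divergent series but then needs a quasi-independence estimate to feed into Borel--Cantelli. You instead compute the fourth moment exactly --- for $d\ge 2$ the Vinogradov system in four variables has only the diagonal solutions, so $\int_{\Tor}|S_d(\vx;N)|^4\,d\vx=2N^2-N$ --- which keeps the ratio of the fourth moment to the squared second moment bounded by $2$, and Paley--Zygmund with $\theta=1/2$ then delivers the \emph{uniform} lower bound $\lambda(A_N)\ge N/(4(2N-1))>1/8$. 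With a uniform bound in hand, reverse Fatou on the probability space $\Tor$ gives $\lambda(\cE_{\alpha,d})=\lambda(\limsup_N A_N)\ge\limsup_N\lambda(A_N)\ge 1/8$ directly, sidestepping all independence issues. Every step checks: the multiset collapse $\{n_1,n_2\}=\{n_3,n_4\}$ from the $j=1,2$ equations alone is standard Newton, the count $2N^2-N$ is correct, and reverse Fatou is valid on a finite measure space. Your closing remark is also apt: upgrading to $\lambda(\cE_{\alpha,d})=1$ (Conjecture~\ref{conj:lambda_Omega}) would still require the correlation control or zero--one law that the paper's proposed approach was aiming at.
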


Here we are mostly interested in the structure of the set of exceptional $\vx\in \Tor$ 
for which~\eqref{eq:M-R} does not hold. For convenience  we
call $\cE_{\alpha, d}$ the exceptional set for each $0<\alpha<1$ and $d\in \N$. Thus we study the exceptional sets 
$\cE_{\alpha, d}$ and show that they are massive enough   in a sense of {\it Baire categories\/}
and the {\it Hausdorff dimension\/}.

\subsection{Main results}

Recall that a subset of $\R^{d}$ is called {\it nowhere dense \/} if its closure in $\R^{d}$ has an empty interior.  We now recall the following:

\begin{definition}
A subset of $\R^{d}$ is of the {\it first  Baire category\/} if it is a countable union of nowhere dense sets; otherwise it is called of the {\it second Baire category\/}. 
\end{definition}

For the basic properties and various applications of Baire categories we refer to~\cite{Oxtoby, SteinShakarchi}.

We now show that the complements of the sets  $\cE_{\alpha, d}$ are  small.

\begin{theorem}\label{thm:Baire}
For each $0<\alpha<1$ and integer $d\geqslant 2$, the subset $\Tor \setminus \cE_{\alpha, d}$ is of the  first Baire category.
\end{theorem}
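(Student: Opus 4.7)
The plan is to establish that
\begin{equation*}
\bigcap_{M=1}^\infty A_M \subseteq \cE_{\alpha, d}, \qquad \text{where} \quad A_M := \bigcup_{N \geqslant M} \{\vx \in \Tor : |S_d(\vx; N)| > N^\alpha\},
\end{equation*}
is a dense $G_\delta$ subset of $\Tor$. Once this is done, the Baire category theorem implies that the complement $\Tor \setminus \bigcap_M A_M = \bigcup_M (\Tor \setminus A_M)$ is a countable union of closed nowhere dense sets, and since $\Tor \setminus \cE_{\alpha, d}$ is contained in this union, it is itself of the first Baire category. Each $A_M$ is automatically open, since the map $\vx \mapsto S_d(\vx; N)$ is continuous on $\Tor$ for every $N$, so the whole task reduces to proving that each $A_M$ is \emph{dense} in $\Tor$.

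Equivalently, for every non-empty open $U \subseteq \Tor$ and every $M \in \N$, I must exhibit $\vx \in U$ and $N \geqslant M$ with $|S_d(\vx; N)| > N^\alpha$. The natural candidates are rational vectors $\vx = \vb/q \in U$, whose collection is dense in $\Tor$ as $q$ varies. For such an $\vx$ and for $N = qT$ with $T \in \N$, the congruence $(sq+r)^j \equiv r^j \pmod{q}$ yields the exact collapse
\begin{equation*}
S_d(\vb/q; qT) \;=\; T \cdot G_d(\vb, q), \qquad G_d(\vb, q) := \sum_{n=1}^{q} \e\left( \sum_{j=1}^d b_j n^j / q \right).
\end{equation*}
Provided $G_d(\vb, q) \neq 0$, this gives $|S_d(\vb/q; qT)| = T\,|G_d(\vb, q)|$, and choosing $T$ larger than both $M/q$ and $(q^{\alpha}/|G_d(\vb, q)|)^{1/(1-\alpha)}$ (possible because $\alpha < 1$) forces $N := qT \geqslant M$ together with $|S_d(\vx; N)| > N^\alpha$, exactly as required.

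The main obstacle is therefore purely arithmetic: to guarantee that every non-empty open $U \subseteq \Tor$ contains a rational $\vx = \vb/q$ with $G_d(\vb, q) \neq 0$. The proposed strategy is to take $q$ large enough that the grid $q^{-1}\Z^d / \Z^d$ intersects $U$ in roughly $q^d \lambda(U)$ points, and then to combine the Parseval identity
\begin{equation*}
\sum_{\vb \in \{0, 1, \ldots, q-1\}^d} |G_d(\vb, q)|^2 \;=\; q^{d+1}
\end{equation*}
with a pointwise upper bound (the exact Gauss-sum evaluation $|G_2(\vb, q)| = \sqrt{q}$ provided by Lemma~\ref{lem:Gauss} when $d=2$, and the Hua-type bound $|G_d(\vb, q)| \ll q^{1-1/d}$ when $d \geqslant 3$) to force at least $\gg q^{d - 1 + 2/d}$ residue vectors $\vb \in \{0,1,\ldots,q-1\}^d$ to satisfy $|G_d(\vb, q)| \gg \sqrt{q}$. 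For $d = 2$ the density is then automatic, since the family $\{\vb/q : \gcd(b_2, q) = 1\}$ is already dense in $\Tor$. For $d \geqslant 3$ one still needs to verify that these ``good'' $\vb$ are not concentrated in some proper subtorus or exceptional arithmetic set, so that at least one of them lies in $U$; I expect this equidistribution-type statement to be the delicate point of the proof.
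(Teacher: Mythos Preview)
Your overall framework---exhibiting a dense $G_\delta$ inside $\cE_{\alpha,d}$ by taking rational $\vb/q$, using the periodicity collapse $S_d(\vb/q;qT)=T\,G_d(\vb,q)$, and then letting $T\to\infty$---is correct and is exactly the skeleton of the paper's argument. The paper too reduces everything to showing that the set of rationals $\va/p$ (with $p$ prime) whose complete sum is large is dense in $\Tor$; this is Corollary~\ref{cor:dense}, and the rest of the proof of Theorem~\ref{thm:Baire} is essentially your $G_\delta$ bookkeeping.

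The gap you flag at the end is genuine and is precisely where the work lies. Two remarks. First, your counting is weaker than necessary: for \emph{prime} $q=p$ the Weil bound (Lemma~\ref{lem:Weil}) gives $|G_d(\vb,p)|\leqslant(d-1)\sqrt{p}$, not merely the Hua bound $\ll p^{1-1/d}$, and feeding this into Parseval yields a \emph{positive proportion} $\gg p^d$ of $\vb$ with $|G_d(\vb,p)|\gg\sqrt{p}$ (this is essentially Lemma~\ref{lem:LB Dens}, due to Knizhnerman--Sokolinskii). Second, even a positive proportion is not enough on its own: one must still place a good $\vb$ in an arbitrary small box, and Remark~\ref{rem:why1} shows that for $d\geqslant3$ the \emph{vanishing} set $\{T_{d,p}(\va)=0\}$ can itself be dense in $\Tor$, so the equidistribution you hope for is not a formality.

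The paper resolves this with one additional idea you are missing: the invariance $|T_{d,p}(\va)|=|T_{d,p}(\lambda\circ\va)|$ under the dilation $\lambda\circ\va=(\lambda a_1,\ldots,\lambda^d a_d)$, $\lambda\in\F_p^*$. Starting from any single good $\va$, the Weil bound applied to the monomial curve $\lambda\mapsto\lambda\circ\va$ (Lemmas~\ref{lem:k} and~\ref{lem:box dense}) shows that the orbit meets every box of side length $\gg p^{1-\kappa_d}\log p$; letting $p\to\infty$ gives density (Corollary~\ref{cor:dense}). This dilation trick is the substantive step that completes your outline.
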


Alternatively,  Theorem~\ref{thm:Baire} is equivalent to the statement that the complement 
$\Tor \setminus\Xi_{d}$ to the set 
\begin{align*}
\Xi_{d}= \bigl\{\vx\in \Tor:~\forall \varepsilon>0, \ |S_d(\vx; N)| &\geqslant N^{1-\varepsilon}\\
& \text{ for infinitely many } N\in \N \bigr\}
\end{align*}
is of first category. Indeed, let $\alpha_j =1-1/j$, $j=1, 2, \ldots$. Then   
$$
\Tor \setminus\Xi_{d}= \Tor \setminus\(\bigcap_{j=1}^{\infty}\cE_{\alpha_j, d}\) = 
 \bigcup_{j=1}^{\infty} \( \Tor \setminus \cE_{\alpha_j, d}\)
$$ 
is a countable union of  first category sets, and is of  first category too.
Since  also  for any $0<\alpha<1 $ we have   $\Xi_{d} \subseteq \cE_{\alpha, d}$, we obtain the desired equivalence.

For sets of  Lebesgue measure zero, it is common to use  the {\it Hausdorff dimension\/} to describe their size; for the properties of the Hausdorff dimension and its applications we refer to~\cite{Falconer, Mattila1995}.  We recall that for $\cU \subseteq \R^{d}$
$$
\diam \cU = \sup\{\| u-v\|_{L^2}:~u,v \in \cU\}
$$
where $\|w\|_{L^2}$ is the Euclidean norm in $\R^{d}$. 

\begin{definition} 
\label{def:Hausdorff}
The  Hausdorff dimension of a set $\cA\subseteq \R^{d}$ is defined as 
\begin{align*}
\dim \cA=\inf\Bigl\{s>0:~\forall \, & \eps>0,~\exists \, \{ \cU_i \}_{i=1}^{\infty}, \ \cU_i \subseteq \R^{d},\\
&  \text{such that } \cA\subseteq \bigcup_{i=1}^{\infty} \cU_i \text{ and } \sum_{i=1}^{\infty}\(\diam\cU_i\)^{s}<\eps \Bigr\}.
\end{align*}
\end{definition}

We show that  for $d\geqslant 2$ and any $0<\alpha<1$ the exceptional set  $\cE_{\alpha, d}$ 
is {\it everywhere rich\/} in  a sense that its intersection 
$$
 \cE_{\alpha, d}\(\fQ\)  =  \cE_{\alpha, d} \cap \fQ,
$$
with any  cube $\fQ \subseteq \Tor$, is of  positive Hausdorff dimension, and give an explicit 
lower bound on this dimension.

We now define 
\begin{equation}  
\label{eq:kappa}
\kappa_d =  \max_{\nu =1, \ldots, d} \min\left\{ \frac{1}{2\nu} ,  \frac{1}{2d-\nu} \right \}.
\end{equation}
We note that
$$
\lim_{d \to \infty} d \kappa_d =\frac{3}{4}
$$
and in fact  if $3 \mid d$ then $\kappa_d=3/4d$.

\begin{theorem}\label{thm:dimension}
For  each $0<\alpha<1$ and any cube $\fQ \subseteq \Tor$   we have 
\begin{itemize}
\item[(i)] for $d=2$, 
$$
\dim \cE_{\alpha, 2} \(\fQ\) \geqslant \min\{ 3/2,\, 3(1-\alpha)\}; 
$$
\item[(ii)] for $d\geqslant 3$, 
$$
\dim \cE_{\alpha, d} \(\fQ\) \geqslant  \min\left\{ \kappa_d,  2 \kappa_d (1-\alpha) \right\}.
$$
\end{itemize}
\end{theorem}

Note that for $0<\alpha \leqslant 1/2$ Theorem~\ref{thm:dimension} asserts that  
$$
\dim \cE_{\alpha, d} \(\fQ\)  \geqslant 
\begin{cases}
 3/2, & \text{for} \ d = 2,\\
\kappa_d  & \text{for} \ d \geqslant 3.
\end{cases}
$$
However Conjecture~\ref{conj:1/2} asserts that for 
any $\alpha\in (0, 1/2)$ and any integer $d\geqslant 2$  we have  $\lambda(\cE_{\alpha, d})>0$ and hence we expect 
$$
\dim \cE_{\alpha, d}=d, 
$$
and even stronger 
$$
\dim  \cE_{\alpha, d} \(\fQ\)=d, 
$$
for any cube $\fQ \subseteq \Tor$.
We remark that in fact we expect  $\lambda(\cE_{\alpha, d}) =1$ for any  $\alpha\in (0, 1/2)$, see Conjecture~\ref{conj:lambda_Omega} below.

Our approach to  Theorem~\ref{thm:dimension}  is based on a version  of the classical   {\it Jarn\'ik--Besicovitch theorem\/}, see~\cite[Theorem~10.3]{Falconer} or~\cite{BDV} and on the investigation  of the distribution of large values of 
rational exponential sums with
prime denominators. This question  is of independent interest and it also gives us an opportunity to mention very interesting but perhaps not so well-known 
results of Knizhnerman and  Sokolinskii~\cite{KnSok1, KnSok2} about large and small values of rational exponential sums. 

Furthermore, we also investigate the monomial sums 
 $$
\sigma_d(x; N)=\sum_{n=1}^{N} \e(xn^d)
$$
to which the above technique does not apply. 
Similarly to the sets $\cE_{\alpha, d}$, for each  $0<\alpha<1$ let 
$$
\sE_{\alpha, d}= \{x\in [0, 1):~|\sigma_d(x; N)| \geqslant N^{\alpha}\text{ for infinitely many } N\in \N\}.
$$

Similarly to Theorem~\ref{thm:Baire} and Theorem~\ref{thm:dimension},  we  also obtain the corresponding results for the monomial sums.

\begin{theorem}
\label{thm:Baire-mon}
For each $0<\alpha<1$ and each integer  $d\geqslant 2$,   the set  $[0, 1)\setminus \sE_{ \alpha, d}$ is of first Baire category.
\end{theorem}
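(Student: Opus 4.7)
The strategy is to prove the stronger statement that $\sE_{\alpha, d}$ is residual in $[0,1)$; since residual sets have meager (first Baire category) complements, this gives the theorem. Writing
$$
\sE_{\alpha, d} = \bigcap_{M=1}^{\infty} V_M, \qquad V_M = \bigcup_{N \geqslant M} \{x \in [0,1):~|\sigma_d(x; N)| \geqslant N^{\alpha}\},
$$
it suffices to show that $\mathrm{int}(V_M)$ is dense in $[0,1)$ for every $M \geqslant 1$, since then $\bigcap_{M} \mathrm{int}(V_M)$ is a dense $G_\delta$ set contained in $\sE_{\alpha, d}$. Concretely, given any nonempty open interval $I \subseteq [0,1)$ and $M \geqslant 1$, the goal is to exhibit $N \geqslant M$ and an open subinterval $J \subseteq I$ on which $|\sigma_d(x; N)| > N^{\alpha}$ everywhere.

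To produce $J$ and $N$, the idea is to exploit the large values of $\sigma_d$ near rational points $a/q$ with $q$ prime, $q \equiv 1 \pmod{d}$, and $\gcd(a, q) = 1$. Writing $N = kq$ and splitting the sum by residue classes modulo $q$ gives
$$
\sigma_d(a/q;\, kq) = k \sum_{r=1}^{q} \e(a r^d/q) = k\, G_d(a, q),
$$
where $G_d(a,q)$ denotes the rational exponential sum on the right. Since $d \mid q-1$, a standard count of solutions to $n^d \equiv m^d \pmod{q}$ yields $\sum_{a=1}^{q-1} |G_d(a, q)|^2 = (d-1)q(q-1)$; combined with the Weil bound $|G_d(a,q)| \leqslant (d-1)\sqrt{q}$ and the fact that $|G_d(a,q)|$ is constant on each coset of $(\Z_q^*)^d$ in $\Z_q^*$, this forces at least one such coset $C$ to satisfy $|G_d(a,q)| \geqslant c\sqrt{q}$ for every $a \in C$, with $c = c(d) > 0$ absolute. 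Applying the P\'olya--Vinogradov inequality to the multiplicative character of order $d$ that cuts out $C$ shows that, provided $q$ is sufficiently large in terms of $|I|$ and $d$, the coset $C$ contains some $a$ with $a/q$ in the middle third of $I$.

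With such $a$ and $q$ fixed, I take $k$ large enough that $N := kq \geqslant M$ and $N^{1-\alpha} \geqslant 4\sqrt{q}/c$, which yields
$$
|\sigma_d(a/q; N)| = (N/q)|G_d(a,q)| \geqslant cN/\sqrt{q} \geqslant 4 N^{\alpha}.
$$
The crude derivative bound $|\sigma_d'(x; N)| \leqslant 2\pi N^{d+1}$ and the mean value theorem give $|\sigma_d(x; N) - \sigma_d(a/q; N)| \leqslant 2\pi N^{d+1} |x - a/q|$, so if $\delta := N^{\alpha-d-1}/(4\pi)$ and $J := (a/q - \delta,\, a/q + \delta)$, then $|\sigma_d(x; N)| > N^{\alpha}$ for every $x \in J$. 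For $N$ large enough we also have $\delta < |I|/3$, which secures $J \subseteq I$ and completes the reduction.

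The main obstacle is realising $|G_d(a,q)| \gg \sqrt{q}$ jointly with $a/q \in I$. For primes $q$ with $\gcd(d, q-1) = 1$ one has $G_d(a, q) = 0$ for every $a$ coprime to $q$, and the scheme collapses on such $q$; restricting to primes $q \equiv 1 \pmod{d}$ (infinitely many by Dirichlet's theorem) sidesteps this, while P\'olya--Vinogradov supplies the equidistribution of the selected $d$th power coset within the prescribed interval. No tool deeper than classical character sum estimates is required.
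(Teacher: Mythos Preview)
Your proof is correct but follows a genuinely different route from the paper. The paper approximates by rationals $a/p^{d}$ with prime-power denominator and invokes the exact evaluation
\[
\sum_{n=1}^{p^{d}} \e\!\left(\frac{a n^{d}}{p^{d}}\right) = p^{d-1} \qquad (\gcd(a,p)=1),
\]
so that \emph{every} reduced $a/p^{d}$ is a ``large-sum'' point; density in $[0,1)$ is then immediate and no character-sum input is needed beyond this identity and the continuity estimate. You instead approximate by $a/q$ with $q$ prime, $q\equiv 1\pmod d$, use a second-moment/Weil argument to isolate a coset of $d$th powers on which $|G_d(a,q)|\gg\sqrt{q}$, and then call on P\'olya--Vinogradov to place a representative of that coset inside the target interval. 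Both schemes yield the same $G_\delta$ mechanism and suffice for the Baire statement. The paper's approach is shorter and avoids character sums entirely; yours stays with prime denominators (which is closer in spirit to the $d$-variable case treated earlier in the paper) and makes explicit the equidistribution of the ``good'' residues, at the cost of the extra P\'olya--Vinogradov step. One minor wording point: the coset $C$ is not cut out by a single character of order $d$ but by the full group of characters trivial on $(\Z_q^*)^d$; your conclusion is unaffected, since P\'olya--Vinogradov applies to each nonprincipal character in that average.
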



We also show the positivity of the Hausdorff dimension of 
$$
\sE_{\alpha, d}(\fI)=\sE_{\alpha, d}\cap \fI
$$ 
for any interval $\fI\subseteq \T$.   In analogy to Theorem~\ref{thm:dimension} we have the following result.

\begin{theorem}
\label{thm:dimension-mon}
For  each $0<\alpha<1$ and any interval $\fI\subseteq \T$,   we have 
\begin{itemize}
\item[(i)] for $d=2$, 
$$
\dim \sE_{ \alpha, 2}(\fI)\geqslant \min \{1, 2(1-\alpha)\}; 
$$
\item[(ii)] for $d\geqslant 3$, 
$$
\dim \sE_{ \alpha, d}(\fI)\geqslant  (1+1/d) \min\left \{2/(d +2),  1-\alpha \right \}.
$$
\end{itemize} 
\end{theorem}
 
Note that for $0<\alpha \leqslant 1/2$  Theorem~\ref{thm:dimension-mon}~(i), for the case $\fI=\T$, asserts that  
$$
\dim \sE_{\alpha, 2}=1.
$$
In fact only the case of $\alpha = 1/2$ is of interest as for $\alpha < 1/2$, and this is instant 
from the result of  Fiedler, Jurkat and K\"orner~\cite[Theorem~2]{FJK}. 

For $0<\alpha\leqslant d/(d+2)$ with $d\geqslant 3$  Theorem~\ref{thm:dimension-mon}~(ii), for the case $\fI=\T$, asserts that 
$$
\dim \sE_{\alpha, d}\geqslant \frac{2(d+1)}{d(d+2)}.
$$
However we conjecture that for each $0<\alpha\leqslant 1/2$ and each $d\geqslant 2$ one  has  
$$
\dim \sE_{\alpha, d}=1
$$ 
and perhaps even stronger 
$$
\dim \sE_{\alpha, d}\(\fI\)=1, 
$$
for any interval $\fI \subseteq \T$. 

\subsection{Applications to uniform distribution modulo one}
\label{sec:ud mod1}

 A quantitative  way to describe the {\it uniformity of distribution modulo one} is given by the  {\it discrepancy\/}, see~\cite{DrTi}. 

\begin{definition}
\label{def:Discr}
Let $x_n$, $n\in \N$,  be a sequence in $[0,1)$. The {\it discrepancy\/}  of this sequence at length $N$ is defined as 
$$
D_N = \sup_{0\le a<b\le 1} \left |  \#\{1\le n\le N:~x_n\in (a, b)\} -(b-a) N \right |.
$$
\end{definition}

 Recalling that a sequence is uniform distributed modulo one if and only if the corresponding discrepancy 
$$
D_N=o(N) \qquad \text{as} \ N \to \infty,
$$
see~\cite[Theorem~1.6]{DrTi} for a proof. We note that sometimes in the literature the scaled quantity $N^{-1}D_N $ is   called the discrepancy, 
since our argument looks cleaner with Definition~\ref{def:Discr}, we adopt it here. 
 
For $\vx\in \Tor$ and the sequence 
$$
x_1n+\ldots +x_d n^{d}, \qquad n\in \N,
$$
we  denote by $D_d(\vx; N)$ the corresponding discrepancy.  Motivated by the work of Wooley~\cite[Theorem~1.4]{Wool3}, 
the authors~\cite{ChSh2}  
have shown that for almost all $\vx\in \Tor$ with $d\ge 2$ one has 
\begin{equation}
\label{eq:D1/2}
D_d(\vx; N)\le N^{1/2+o(1)} \qquad \text{as} \ N \to \infty.
\end{equation}
In view of Lemmas~\ref{lem:KS} and~\ref{lem:KH} below,
Conjecture~\ref{conj:1/2} is equivalent to the statement  that the exponent $1/2$ in~\eqref{eq:D1/2} cannot be improved.

Thus,  the bound~\eqref{eq:D1/2},  combined with Lemma~\ref{lem:KH} below, provides yet another way to obtain that 
$$
S_d(\vx; N)\ll N^{1/2+o(1)},\qquad \text{as} \ N \to \infty,
$$
holds for almost all $\vx\in \Tor$ (which is a slightly less precise version of~\eqref{eq:M-R}). 

Let
$$
\cD_{\alpha, d}=\{\vx\in \Tor:~D_d(\vx; N)\ge N^{\alpha} \text{ for infinitely many } N\in \N\}.
$$

\begin{theorem}
\label{thm:BaireD}
For each $0<\alpha<1$ and integer $d\ge 2$ the subset $\Tor\setminus \cD_{\alpha, d}$ is of the first Baire category. 
\end{theorem}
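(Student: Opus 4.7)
The plan is to deduce Theorem~\ref{thm:BaireD} directly from Theorem~\ref{thm:Baire} via the Koksma--Hlawka type inequality recorded in Lemma~\ref{lem:KH}, which (for the test function $e(t)$, whose total variation is an absolute constant) furnishes an inequality of the shape
$$
|S_d(\vx; N)| \leqslant c \cdot D_d(\vx; N)
$$
for all $\vx \in \Tor$ and all $N \in \N$, with some absolute constant $c > 0$. Thus a large Weyl sum forces a correspondingly large discrepancy of the polynomial sequence $x_1 n + \ldots + x_d n^d$.

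The key step is the following inclusion: for any pair of exponents $0 < \alpha < \beta < 1$,
$$
\cE_{\beta, d} \subseteq \cD_{\alpha, d}.
$$
Indeed, if $\vx \in \cE_{\beta, d}$ then $|S_d(\vx; N)| \geqslant N^{\beta}$ for infinitely many $N \in \N$, and by the Koksma--Hlawka bound above one gets $D_d(\vx; N) \geqslant c^{-1} N^{\beta}$ for each such $N$. Since $c^{-1} N^{\beta} \geqslant N^{\alpha}$ as soon as $N \geqslant c^{1/(\beta - \alpha)}$, all but finitely many of those $N$ satisfy $D_d(\vx; N) \geqslant N^{\alpha}$, so $\vx \in \cD_{\alpha, d}$. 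Taking complements yields
$$
\Tor \setminus \cD_{\alpha, d} \subseteq \Tor \setminus \cE_{\beta, d}.
$$

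To conclude, fix any $\alpha \in (0, 1)$ and choose any $\beta \in (\alpha, 1)$. By Theorem~\ref{thm:Baire} the set $\Tor \setminus \cE_{\beta, d}$ is of the first Baire category, that is, a countable union of nowhere dense subsets of $\Tor$. Since any subset of a nowhere dense set is nowhere dense, any subset of a first category set is itself of the first Baire category, and the inclusion above then gives the desired conclusion for $\Tor \setminus \cD_{\alpha, d}$. There is no real obstacle in this argument beyond verifying that the Koksma--Hlawka inequality of Lemma~\ref{lem:KH} is stated in the form that bounds $|S_d(\vx; N)|$ in terms of $D_d(\vx; N)$ with a constant independent of $N$; with the discrepancy convention of Definition~\ref{def:Discr} (unnormalised) this is exactly the classical one-dimensional Koksma estimate applied to $f(t) = e(t)$.
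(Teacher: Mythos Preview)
Your proposal is correct and follows essentially the same route as the paper: the paper also uses Lemma~\ref{lem:KH} to obtain the inclusion $\cE_{\alpha+\varepsilon, d}\subseteq \cD_{\alpha, d}$ for any $\varepsilon>0$, and then invokes Theorem~\ref{thm:Baire} together with the hereditary nature of first category to conclude. Your write-up simply makes the constant $c$ and the choice $\beta=\alpha+\varepsilon$ explicit.
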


Note that  this is equivalent to the statement that the complement 
$\Tor \setminus\mathfrak{D}_{d}$ to the set 
\begin{align*}
\mathfrak{D}_{d}= \bigl\{\vx\in \Tor:~\forall \varepsilon>0,\  D_d(\vx; N) &\geqslant N^{1-\varepsilon}\\
& \text{ for infinitely many } N\in \N \bigr\}
\end{align*}
is of first Baire category. 


For any cube $\fQ\subseteq \T_d$ denote  $\cD_{\alpha, d}(\fQ)=\cD_{\alpha, d} \cap \fQ$.

\begin{theorem}
\label{thm:dimD}
For  each $0<\alpha<1$ and any $\fQ \subseteq \T_d$,  we have 
\begin{itemize}
\item[(i)] for $d=2$, 
$$
\dim \cD_{\alpha, 2}(\fQ)\geqslant \min\{ 3/2,\, 3(1-\alpha)\}; 
$$
\item[(ii)] for $d\geqslant 3$, 
$$
\dim \cD_{\alpha, d}(\fQ) \geqslant  \min\left\{ \kappa_d,  2 \kappa_d (1-\alpha) \right\}.
$$
\end{itemize}
\end{theorem}

In the case of 
monomials, 
For $x \in  [0,1)$ 
we  denote by $\Delta_d(x; N)$ the   discrepancy of  the sequence 
$xn^{d}$,  $n\in \N$ and set
$$
\sD_{\alpha, d}=\{x \in  [0,1):~\Delta_d(x; N)\ge N^{\alpha} \text{ for infinitely many } N\in \N\}.
$$

We have the following analogues of Theorems~\ref{thm:BaireD} and~\ref{thm:dimD}

\begin{theorem}
\label{thm:BaireD-mon}
For each $0<\alpha<1$ and integer $d\ge 2$ the subset $[0,1)\setminus \sD_{\alpha, d}$ is of the first Baire category. 
\end{theorem}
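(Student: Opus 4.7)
The plan is to deduce Theorem~\ref{thm:BaireD-mon} directly from Theorem~\ref{thm:Baire-mon} by exploiting a one-sided pointwise comparison between the monomial Weyl sum $\sigma_d(x;N)$ and the discrepancy $\Delta_d(x;N)$. Applying the Koksma--Hlawka inequality (Lemma~\ref{lem:KH}) to the single test function $f(t)=\e(t)$, which has bounded variation and mean zero on $[0,1]$, produces an absolute constant $C>0$ such that
$$
|\sigma_d(x;N)| \leqslant C\,\Delta_d(x;N)
$$
for every $x\in[0,1)$ and every $N\in\N$. Matching the extreme discrepancy of Definition~\ref{def:Discr} with the star discrepancy implicit in the usual statement of Koksma--Hlawka costs only a factor of two, which is absorbed into $C$.

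Next, fix $\alpha'\in(0,1)$ and choose any $\alpha\in(\alpha',1)$. I would establish the inclusion $\sE_{\alpha, d}\subseteq \sD_{\alpha', d}$ as follows: if $x\in \sE_{\alpha, d}$, so that $|\sigma_d(x;N)|\geqslant N^{\alpha}$ for infinitely many $N$, then for each such $N$ we have $\Delta_d(x;N)\geqslant N^{\alpha}/C$, and since $N^{\alpha}/C\geqslant N^{\alpha'}$ for all sufficiently large $N$, infinitely many of these witnesses $N$ satisfy $\Delta_d(x;N)\geqslant N^{\alpha'}$, giving $x\in \sD_{\alpha', d}$. Passing to complements,
$$
[0,1)\setminus \sD_{\alpha', d} \;\subseteq\; [0,1)\setminus \sE_{\alpha, d}.
$$

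By Theorem~\ref{thm:Baire-mon} the right-hand side is of first Baire category, and since the class of first category sets is closed under taking subsets, the left-hand side is also of first Baire category, which is the desired statement. The argument is thus essentially a short corollary of Theorem~\ref{thm:Baire-mon} together with Koksma--Hlawka, and I do not expect any genuine obstacle; the only point of care is the bookkeeping needed to match the discrepancy convention, which is entirely routine.
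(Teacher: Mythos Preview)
Your proposal is correct and follows essentially the same route as the paper: both apply the Koksma--Hlawka inequality (Lemma~\ref{lem:KH}) to obtain $|\sigma_d(x;N)|\ll \Delta_d(x;N)$, deduce the inclusion $\sE_{\alpha,d}\subseteq \sD_{\alpha',d}$ for any $\alpha'<\alpha$ (the paper writes this as $\sE_{\alpha+\varepsilon,d}\subseteq \sD_{\alpha,d}$), and then invoke Theorem~\ref{thm:Baire-mon} together with downward closure of first-category sets.
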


Furthermore, we also have the following result.  For an interval $\fI \subseteq \T$ denote $\sD_{\alpha, d}(\fI)=\sD_{\alpha, d} \cap \fI$.

\begin{theorem}
\label{thm:dimD-mon}
For  each $0<\alpha<1$ and any interval $\fI \subseteq \T$,  we have 
\begin{itemize}
\item[(i)] for $d=2$, 
$$
\dim \sD_{ \alpha, 2}(\fI)\geqslant \min \{1, 2(1-\alpha)\}; 
$$
\item[(ii)] for $d\geqslant 3$, 
$$
\dim \sD_{ \alpha, d}(\fI)\geqslant  (1+1/d) \min\left \{2/(d +2),  1-\alpha \right \}
$$
\end{itemize}
\end{theorem}

%
%

We remark that the case $d=1$ is a special case. For the  linear sequence $(nx)$ the celebrated result of  Khintchine, see~\cite[Theorem~1.72]{DrTi}, implies that  for almost all $x\in [0,1)$ one has 
$$
\cD_1(x; N)\le N^{o(1)},\qquad \text{as} \ N \to \infty.
$$

 \section{Preliminaries} 
 
 \subsection{Notation and conventions}

Throughout the paper, the notations $U = O(V )$, 
$U \ll V$ and $ V\gg U$  are equivalent to $|U|\leqslant c|V| $ for some positive constant $c$, 
which throughout the paper may depend on the degree $d$ and occasionally on the small real positive 
parameters $\varepsilon$ and $\delta$.  
 
 We use $\# \cX$ to denote the cardinality of set $\cX$.  
 
 The letter $p$, with or without a subscript,  always denotes a prime number. 
 
 We always identify $\Tor$ with half-open unit cube $[0, 1)^d$, in particular we
 naturally associate Euclidean norm  $\|x\|_{L^2}$ with points $x \in \Tor$.

We say that some property holds for almost all $\vx \in \Tor$ if it holds for a set 
 $\cX \subseteq \Tor$ of  Lebesgue measure  $\lambda(\cX) = 1$. 
 
We always keep the subscript $d$ in notations for our main objects of interest such as 
$\cE_{\alpha, d}$, $S_d(\vx; N)$ and $\Tor$, but sometimes suppress
it in auxiliary quantities.

\subsection{Complete rational exponential sums and uniform distribution}

We first  recall the classical {\it Weil bound\/}, see, for example,~\cite[Chapter~6, Theorem~3]{Li}. 
For a prime $p$,  let $\F_p$ denote the finite field of $p$ elements, which we identify with the set
$\{0, \ldots, p-1\}$, and  let $\F_p^*=\F_p\setminus \{0\}$. Furthermore let $\ep(z)=\e(z/p)$.  

\begin{lemma}\label{lem:Weil}
Let $f\in \F_p[X]$ be a nonconstant polynomial  of  degree $ \deg f \leqslant d$.  Then we have  
$$
 \sum_{\lambda \in \F_p}  \ep\(f(\lambda)\)   \ll \sqrt{p}.
$$
\end{lemma}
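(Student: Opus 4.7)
The plan is to deduce this classical bound from Weil's theorem on rational points of curves over finite fields, applied to an appropriate Artin--Schreier cover of the affine line. Write $m=\deg f\leqslant d$. When $p\leqslant d$ the trivial bound $\bigl|\sum_{\lambda}\ep(f(\lambda))\bigr|\leqslant p\leqslant d\sqrt{p}\ll\sqrt{p}$ already suffices, so I focus on $p>d$, in which case $\gcd(m,p)=1$ automatically.

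Under this assumption, I would form the smooth affine curve $C\colon Y^{p}-Y=f(X)$ over $\F_p$. Its smooth projective completion $\overline{C}$ has a single point at infinity, and the degree-$p$ Artin--Schreier cover $\overline{C}\to\mathbb{P}^{1}$ is totally ramified there; Riemann--Hurwitz then gives genus $g=(p-1)(m-1)/2$. By the Artin--Schreier exact sequence, the map $y\mapsto y^{p}-y$ on $\F_{p^{n}}$ is $\F_p$-linear with kernel $\F_p$ and image $\ker\Tr_{\F_{p^{n}}/\F_p}$. Hence $\#C(\F_{p^{n}})=p\cdot\#\{x\in\F_{p^{n}}:\Tr f(x)=0\}$, and orthogonality of the additive characters of $\F_p$ rewrites this as
$$
\#C(\F_{p^{n}})=p^{n}+\sum_{t\in\F_p^{*}}S_{n}(t), \qquad S_{n}(t)=\sum_{x\in\F_{p^{n}}}\ep\bigl(t\Tr f(x)\bigr).
$$
The sum in the lemma is $S_{1}(1)$.

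To extract a bound on each individual $S_{n}(t)$ rather than only on the aggregate, I would package them into the L-functions $L_{t}(T)=\exp\bigl(\sum_{n\geqslant 1}T^{n}S_{n}(t)/n\bigr)$. Each $L_{t}(T)$ appears as a character-isotypic factor of the zeta function of $\overline{C}$ under the Galois action of the Artin--Schreier cover, and a standard check shows it is a polynomial in $T$ of degree $m-1$. Weil's Riemann hypothesis for $\overline{C}$ then forces all reciprocal roots of $L_{t}(T)$ to have absolute value $p^{1/2}$, giving $|S_{n}(t)|\leqslant(m-1)\,p^{n/2}$; taking $n=1$, $t=1$ yields the lemma with implicit constant $d-1$.

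The main obstacle is precisely the L-function decomposition step: one must identify each $L_{t}(T)$ as a factor of the zeta function of $\overline{C}$ and pin down its degree, so that the absolute-value bound on reciprocal roots transfers from the full zeta function via Weil's theorem to the individual L-factors. An elementary alternative would be the method of Stepanov, which bypasses the Weil conjectures altogether by constructing an auxiliary polynomial of controlled degree vanishing to high order on the level sets $\{x\in\F_p:\ep(f(x))=\zeta\}$ for $p$-th roots of unity $\zeta$, and deriving a contradiction by comparing degree against total order of vanishing; this route is more combinatorially delicate but self-contained.
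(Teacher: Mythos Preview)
Your sketch is correct and follows the classical Artin--Schreier route to the Weil bound: the curve $Y^{p}-Y=f(X)$, its genus $(p-1)(m-1)/2$ when $\gcd(m,p)=1$, the factorisation of its zeta function into character $L$-functions of degree $m-1$, and then the Riemann hypothesis for curves. The trivial handling of $p\leqslant d$ is fine since the implied constant may depend on $d$.

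The paper, however, does not prove this lemma at all; it simply quotes it as the classical Weil bound with a reference to \cite[Chapter~6, Theorem~3]{Li}. So there is nothing to compare methodologically: you have supplied a genuine (and standard) proof outline where the paper only cites. Your mention of Stepanov's method as a self-contained alternative is also apt, and is in fact closer in spirit to how many textbook treatments (including some in the reference cited) proceed when avoiding algebraic geometry.
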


Applying Lemma~\ref{lem:Weil} and the completion  technique (see~\cite[Section~12.2]{IwKow}) fwe derive the following bounds for incomplete sums. For $1\le N\le p$ one has   
\begin{equation}
\label{eq:com}
\sum_{n=1}^N \e_p(f(\lambda)) \ll \sqrt{p} \log p.
\end{equation}

Next, we consider discrete cubic boxes 
\begin{equation}
\label{eq:box}
\fB=\cI_1\times \ldots \times \cI_d \subseteq \F_p^{d}
\end{equation}
with the side length
$$
\ell(\fB) = L, 
$$
where  $\cI_j = \{k_j+1, \ldots, k_j +L\}$ is a set of $L \leqslant p $ consecutive integers,
(reduced modulo $p$ if $k_j +L \ge p$), $j =1, \ldots, d$.

We formulate the following easy consequence   of  the {\it Koksma--Sz\"usz inequality\/}, see~\cite[Theorem~1.21]{DrTi}.

\begin{lemma}
\label{lem:KS}
Let $\bxi_i \in \F_p^{d}$,  $1\leqslant i\leqslant I$, be a sequence of $I$ vectors over $\F_p$ and  let $\fB\subseteq \F_p^{d}$ be a box. Let 
$$
R=\#\{\bxi_i \in \fB:~1\leqslant i\leqslant I\}.
$$ 
Then we have 
$$
\left| R-\# \fB I p^{-d} \right| \ll (\log p)^{d}\max_{\vh \in \F_p^{d} \setminus \{ \0\}}\left|  \sum_{i=1}^{I} \ep(\left\langle\bxi_i,  \vh \right\rangle) \right|, 
$$
where $\left\langle\bxi,  \vh \right\rangle$ denotes the scalar product of two vectors $\bxi, \vh \in \F_p^{d}$.
\end{lemma}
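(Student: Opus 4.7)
My plan is to view this as a finite-field Erd\H{o}s--Tur\'an--Koksma type statement and to obtain it by Fourier expansion on $\F_p^d$. First I would write $R=\sum_{i=1}^{I}\mathbf{1}_{\fB}(\bxi_i)$ and, using the fact that $\fB=\cI_1\times\ldots\times\cI_d$ is a product, expand the indicator of each $\cI_j\subseteq \F_p$ in the additive characters of $\F_p$:
$$
\mathbf{1}_{\cI_j}(x)=\frac{1}{p}\sum_{h\in \F_p} a_j(h)\ep(hx),\qquad a_j(h)=\sum_{y\in \cI_j}\ep(-hy).
$$
Taking the product over $j$ gives an expansion of $\mathbf{1}_{\fB}$ in the characters $\ep(\left\langle\cdot,\vh\right\rangle)$ of $\F_p^d$ with coefficients $c(\vh)=\prod_{j}a_j(h_j)$.

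The term $\vh=\0$ contributes $c(\0)=\#\fB$, so summing over $i$ and dividing by $p^d$ I would extract exactly the claimed main term $\#\fB\cdot I p^{-d}$. The remaining sum over $\vh\neq \0$ I would estimate by pulling out the maximum of the inner character sum:
$$
\bigl|R-\#\fB\, I p^{-d}\bigr|\leqslant \max_{\vh\in\F_p^d\setminus\{\0\}}\Biggl|\sum_{i=1}^{I}\ep(\left\langle\bxi_i,\vh\right\rangle)\Biggr|\cdot \frac{1}{p^d}\sum_{\vh\in\F_p^d}|c(\vh)|.
$$
Since $c$ factorises, the last factor factorises too into $\prod_{j=1}^{d}\bigl(p^{-1}\sum_{h\in \F_p}|a_j(h)|\bigr)$.

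The crux is therefore the one-dimensional estimate
$$
\frac{1}{p}\sum_{h\in \F_p}|a_j(h)|\ll \log p,
$$
which I would prove by the standard calculation: using the geometric sum $a_j(h)=\ep(-h(k_j+1))(1-\ep(-hL))/(1-\ep(-h))$ and representing $h$ by a residue in $(-p/2,p/2]$, one has $|a_j(0)|=L\leqslant p$ and $|a_j(h)|\ll p/|h|$ for $h\neq 0$, so the harmonic sum delivers the logarithmic factor. Multiplying over the $d$ coordinates gives $(\log p)^d$ and finishes the bound.

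The proof is essentially routine; there is no real obstacle beyond carefully separating the $\vh=\0$ term and the one-line harmonic estimate for the Fourier coefficients of an interval in $\F_p$. The only mild care is to note that the harmonic bound $|a_j(h)|\ll p/|h|$ uses the nearest-integer representative of $h$ modulo $p$, after which the factorisation over the $d$ coordinates of $\fB$ automatically produces the $(\log p)^d$ factor asserted in the lemma.
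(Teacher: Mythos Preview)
Your argument is correct and is exactly the standard finite-field Erd\H{o}s--Tur\'an--Koksma calculation. The paper does not supply a proof at all: it simply records the lemma as an easy consequence of the Koksma--Sz\"usz inequality~\cite[Theorem~1.21]{DrTi}, so your proposal in effect writes out the underlying mechanism rather than citing the black box.
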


\subsection{Distribution of large rational exponential sums}

 For  a vector $\va = (a_1, \ldots, a_d)\in \F_p^d$ we consider the rational exponential  sum
$$
T_{d, p} (\va) =S_d(\va/p;p)=\sum_{n=1}^{p}\ep\(a_1 n+\ldots +a_d n^{d} \). 
$$

We need some results about the density of the vectors $\va \in \F_p^d$ for which the sums $T_{d, p} (\va)$ are large.

For $d=2$ the answer  to the question is trivial due to the following property of Gaussian sums, 
see~\cite[Equation~(1.55)]{IwKow}.

\begin{lemma}
\label{lem:Gauss}
Let $p\ge 3$ and $a, b\in \F_p$ with $b\neq 0$, then 
$$
\left|\sum_{n=0}^{p-1}\ep\left(an+bn^{2}\right)\right|=\sqrt{p}.
$$
\end{lemma}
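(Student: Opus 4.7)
The statement is the classical evaluation of the absolute value of a quadratic Gauss sum over $\F_p$ for odd $p$, so the plan is to follow the standard two-step route: reduce to a pure Gauss sum by completing the square, then evaluate the modulus squared via orthogonality.

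First, since $p \geqslant 3$ is odd and $b \neq 0$ in $\F_p$, the element $2b$ is invertible. Writing $c = a(2b)^{-1} \in \F_p$, we have the identity
$$
an + bn^{2} = b(n+c)^{2} - bc^{2} \pmod{p}.
$$
Substituting $m = n+c$ and noting that $m$ runs over a complete residue system modulo $p$ as $n$ does, we obtain
$$
\sum_{n=0}^{p-1}\ep\(an+bn^{2}\) = \ep\(-bc^{2}\) \sum_{m=0}^{p-1}\ep\(bm^{2}\).
$$
Since $|\ep(-bc^{2})|=1$, this reduces the claim to showing
$$
\left| \sum_{m=0}^{p-1}\ep\(bm^{2}\)\right| = \sqrt{p}.
$$

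For the pure Gauss sum, I would square and expand:
$$
\left| \sum_{m=0}^{p-1}\ep\(bm^{2}\) \right|^{2} = \sum_{m,n=0}^{p-1} \ep\(b(m^{2}-n^{2})\).
$$
Making the substitution $m = n+h$ with $h$ ranging over $\F_p$, this becomes
$$
\sum_{h=0}^{p-1} \ep\(bh^{2}\) \sum_{n=0}^{p-1} \ep\(2bhn\).
$$
Using that $2b \neq 0$ in $\F_p$ (again because $p$ is odd and $b \neq 0$), the inner sum equals $p$ when $h=0$ and vanishes otherwise by orthogonality of additive characters. The total is therefore exactly $p$, which gives the desired equality after taking the square root.

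There is no real obstacle here beyond keeping track of invertibility: the only place the hypothesis $p \geqslant 3$ is used is to guarantee that $2 \in \F_p^{*}$ so that both the completion of the square and the orthogonality step for the linear phase in $n$ go through. One could alternatively quote Lemma~\ref{lem:Weil} directly, but the self-contained completion-of-the-square argument above is shorter and gives the exact value rather than just an upper bound.
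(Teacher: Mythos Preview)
Your argument is correct: the completion of the square reduces to a pure Gauss sum, and the modulus-squared computation via the substitution $m=n+h$ and orthogonality is carried out cleanly, with the hypothesis $p\geqslant 3$ used exactly where it is needed to invert $2b$.

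The paper does not actually supply its own proof of this lemma; it simply cites~\cite[Equation~(1.55)]{IwKow}. Your self-contained argument is the standard one underlying that citation, so in content the two agree, but you have written out what the paper leaves to the reference.
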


We now investigate the case of $d\geqslant 3$.   
For this, we  define 
$$
\omega_d = \liminf_{p\to \infty}\frac{1}{\sqrt{p}}
\, \max_{\substack{\va = (a_1, \ldots, a_d)\in \F_p^d\\ a_d \ne 0}}\,  \left| T_{d, p} (\va)\right|.
$$  
From the classical method of Mordell~\cite{Mord} we have  
\begin{equation}  
\label{eq:Mord}
\sum_ {\va \in \F_p^d} \left| T_{d, p} (\va)\right|^{2d} = d! p^{2d} + O(p^{2d-1}).
\end{equation}

Hence, taking into account the contribution $\left| T_{d, p} (\0)\right|^{2d} = p^{2d}$ from the zero vector $\va \ne \0$
and estimating the contribution from $O\(p^{d-1}\)$ vectors with $a_d=0$ by Lemma~\ref{lem:Weil}, we obtain
$$
\sum_{\substack{\va = (a_1, \ldots, a_d)\in \F_p^d\\ a_d \ne 0}}\left| T_{d, p} (\va)\right|^{2d} = \(d!-1\) p^{2d} + O(p^{2d-1}), 
$$
which trivially implies that 
$$
\omega_d \geqslant  \(d!-1\) ^{1/2d}. 
$$
Knizhnerman and  Sokolinskii~\cite{KnSok1,KnSok2} have given stronger lower bounds, asymptotically for $d\to \infty$
and also for small values of $d$,  for example, $\omega_3 \geqslant \sqrt{3}$. 

Furthermore, by~\cite[Theorem~1]{KnSok1} we have 

\begin{lemma}
\label{lem:LB Dens}
For every integer $d \geqslant 2$ there are some positive constants $c_d$ and $\gamma_d$ 
such that 
$$
\left| T_{d, p} (\va)\right|  \geqslant \gamma_d \sqrt{p} 
$$
for a set $\cL_p \subseteq  \F_p^d$ of cardinality  $\#\cL_p \geqslant c_dp^d$. 
\end{lemma}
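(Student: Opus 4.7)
The strategy is to compare Mordell's $2d$-th moment bound~\eqref{eq:Mord} against Weil's pointwise estimate (Lemma~\ref{lem:Weil}). The moment bound forces $|T_{d,p}(\va)|/\sqrt{p}$ to have average size of order a constant, while Lemma~\ref{lem:Weil} gives a uniform pointwise bound $|T_{d,p}(\va)| \ll \sqrt{p}$ whenever $a_d \neq 0$. These two facts cannot be reconciled unless a positive proportion of $\va \in \F_p^d$ satisfy $|T_{d,p}(\va)| \geq \gamma_d \sqrt{p}$ for some constant $\gamma_d$, which is exactly the content of the lemma.

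To make this precise I would fix a small parameter $\gamma > 0$ (to be chosen at the end) and set $\cL_p(\gamma) = \{\va \in \F_p^d : |T_{d,p}(\va)| \geq \gamma \sqrt{p}\}$. Then I would split $\sum_{\va \in \F_p^d} |T_{d,p}(\va)|^{2d}$ into three pieces according to whether $\va = \0$, or $\va \neq \0$ with $a_d = 0$, or $a_d \neq 0$. The first contributes exactly $p^{2d}$; in the second case $T_{d,p}(\va)$ is a sum against a nonconstant polynomial of degree at most $d-1$ and Lemma~\ref{lem:Weil} yields a total contribution of $O(p^{2d-1})$. On the third set Lemma~\ref{lem:Weil} provides a uniform bound of shape $|T_{d,p}(\va)| \leq C_d \sqrt{p}$, so I would further split according to whether $\va \in \cL_p(\gamma)$: the intersection contributes at most $C_d^{2d} p^d \cdot \#\cL_p(\gamma)$ by the Weil bound, while its complement contributes at most $\gamma^{2d} p^{2d}$ by the definition of $\cL_p(\gamma)$.

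Inserting these four estimates into~\eqref{eq:Mord} gives
\[
(d!-1) p^{2d} + O(p^{2d-1}) \leq C_d^{2d} p^{d} \#\cL_p(\gamma) + \gamma^{2d} p^{2d},
\]
and rearranging yields $\#\cL_p(\gamma) \geq ((d!-1) - \gamma^{2d}) C_d^{-2d} p^d - O(p^{d-1})$. Since $d! > 1$ for $d \geq 2$, choosing $\gamma$ small enough so that $\gamma^{2d} \leq (d!-1)/2$ (say) and restricting to $p$ sufficiently large yields the required density bound $\#\cL_p(\gamma) \geq c_d p^d$ with an explicit $c_d > 0$ and $\gamma_d = \gamma$. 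I do not expect any real obstacle here: this is a standard $L^{2d}$ versus $L^\infty$ comparison, and the only minor subtlety is keeping track of the small contributions from $\va = \0$ and from $\va$ with $a_d = 0$ so that the main term $(d!-1)p^{2d}$ is not swallowed on the way.
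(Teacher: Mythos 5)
Your argument is correct, and it is in fact \emph{more} self-contained than what the paper does: the paper does not prove Lemma~\ref{lem:LB Dens} at all, but simply cites Theorem~1 of Knizhnerman--Sokolinskii~\cite{KnSok1}. The paper does, in the surrounding discussion, compute the Mordell moment $\sum_{a_d\neq 0}|T_{d,p}(\va)|^{2d}=(d!-1)p^{2d}+O(p^{2d-1})$, but only extracts from it the weaker consequence $\omega_d\geqslant (d!-1)^{1/2d}$ (existence of a single large value), deferring the positive-density statement to the reference. Your proof pushes that same moment computation one step further via the standard $L^{2d}$--versus--$L^\infty$ pigeonhole: the Weil bound caps the contribution of each $\va$ with $a_d\neq 0$ at $C_d^{2d}p^d$, so the main term $(d!-1)p^{2d}$ cannot be produced by $o(p^d)$ summands once the small ones (below $\gamma\sqrt p$) have been discarded. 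The decomposition is a genuine partition of $\F_p^d$, the inequality rearranges as you say, and choosing $\gamma^{2d}<d!-1$ gives $\#\cL_p(\gamma)\gg p^d$ for all large $p$; for the finitely many small primes one trivially secures the bound by shrinking $c_d$ (e.g.\ $\cL_p\ni\0$ alone suffices). Two small points worth recording: (i)~Lemma~\ref{lem:Weil} requires $p>d$ to be applicable as stated (so that degree-$\leqslant d$ nonconstant polynomials really give nonconstant functions on $\F_p$ and Weil's bound applies with $C_d=d-1$), which is harmless here; and (ii)~your method yields any $\gamma_d<(d!-1)^{1/2d}$ at the cost of a smaller $c_d$, whereas Knizhnerman--Sokolinskii obtain sharper explicit values of $\gamma_d$; since the paper uses the lemma only qualitatively (any fixed positive $\gamma_d,c_d$ suffice in Lemmas~\ref{lem:k}--\ref{lem:box dense}), your elementary proof is fully adequate for the purposes of this paper.
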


We now show that the vectors $\va \in \F_p^d$ for which the sums $T_{d, p} (\va)$ reach their extreme values are 
reasonably densely distributed. 
That is. we intend to show that the set $\cL_p$ of Lemma~\ref{lem:LB Dens}  is quite dense. Before this we provide 
a  result on the distribution of monomial curves.

\begin{lemma}   
\label{lem:k}
Let $(a_1, \ldots, a_k)\in (\F_p^*)^{k}$, $k\geqslant 2$.  Then  there exists a positive constant $C$ which depends only 
on $k$ such that for any box $\fB$ as in~\eqref{eq:box}  with the side length $L\geqslant C p^{1-1/2k} \log p$  we have 
$$
 \#\left\{\lambda\in \F_p^{*}:~(a_1 \lambda , \ldots, a_k \lambda^{k})\in \fB \right\} \geqslant 0.5  L^{k}p^{1-k}. 
$$   
\end{lemma}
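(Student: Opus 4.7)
The plan is to apply the Koksma--Sz\"usz inequality (Lemma~\ref{lem:KS}) to the sequence of vectors $\bxi_\lambda = (a_1 \lambda, a_2 \lambda^2, \ldots, a_k \lambda^k) \in \F_p^k$ indexed by $\lambda \in \F_p^{*}$, so that $I = p-1$ and the ambient dimension is $d = k$. Then the main term is
$$
\# \fB \cdot (p-1) p^{-k} = L^k (p-1) p^{-k},
$$
and we are left with controlling, uniformly over $\vh = (h_1, \ldots, h_k) \in \F_p^k \setminus \{\0\}$, the incomplete exponential sum
$$
\left| \sum_{\lambda = 1}^{p-1} \ep\(h_1 a_1 \lambda + h_2 a_2 \lambda^2 + \ldots + h_k a_k \lambda^k \) \right|.
$$

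The key observation is that the polynomial $f(X) = \sum_{j=1}^{k} h_j a_j X^j \in \F_p[X]$ is nonconstant: since $\vh \ne \0$, some $h_{j_0}$ is nonzero, and since $a_{j_0}\in \F_p^{*}$ the coefficient $h_{j_0} a_{j_0}$ is nonzero in $\F_p$ at degree $j_0 \geqslant 1$. Hence Lemma~\ref{lem:Weil} applies and yields $\sum_{\lambda \in \F_p} \ep(f(\lambda)) \ll \sqrt{p}$, and removing the $\lambda = 0$ term changes this by $O(1)$. Feeding this into Lemma~\ref{lem:KS} with the choice above gives
$$
\left| R - L^k (p-1) p^{-k} \right| \ll \sqrt{p}\,(\log p)^k,
$$
where $R$ denotes the left-hand side of the desired inequality.

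Finally, a direct comparison yields the conclusion: the main term is of order $L^k p^{1-k}$, and for this to dominate the error by a factor of at least $2$ it suffices to require $L^k p^{1-k} \gg \sqrt{p}\,(\log p)^k$, i.e.\ $L \gg p^{1 - 1/(2k)} \log p$, which is precisely the hypothesis with a suitable constant $C = C(k)$. Choosing $C$ so that the error is at most half the main term gives $R \geqslant 0.5\, L^k p^{1-k}$, as claimed.

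I expect the only mildly delicate point to be the verification that $f$ is \emph{nonconstant} rather than merely nonzero (so that Weil's bound applies), and the bookkeeping of constants to ensure that a single absolute constant $C = C(k)$ suffices so that the error term from Koksma--Sz\"usz is at most half the main term for \emph{all} boxes $\fB$ of side $L \geqslant C p^{1-1/(2k)} \log p$; both issues are routine once the setup above is in place.
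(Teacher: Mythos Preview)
Your proposal is correct and follows essentially the same approach as the paper's own proof: apply the Weil bound (Lemma~\ref{lem:Weil}) to the character sums arising from the sequence $(a_1\lambda,\ldots,a_k\lambda^k)$, and feed this into the Koksma--Sz\"usz inequality (Lemma~\ref{lem:KS}). The paper's proof is in fact terser than yours, omitting the explicit verification that $f$ is nonconstant and the final constant-balancing, but the argument is identical.
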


\begin{proof}
For a nonzero vector $\vh = (h_1, \ldots, h_k) \in \F_p^{k} \setminus \{\0\}$  the Weil bound, see Lemma~\ref{lem:Weil}, gives 
$$
\sum_{\lambda\in \F_p^{*}} \ep\left(  \sum_{j=1}^{k} \lambda^{j}a_jh_j\right )\ll p^{1/2}.
$$ 
Combining this bound with Lemma~\ref{lem:KS}, we finish the proof.
\end{proof}

Clearly we can replace a lower bound $0.5  L^{k}p^{1-k}$ of Lemma~\ref{lem:k} 
with an asymptotic formula $(1+o(1))  L^{k}p^{1-k}$ for slightly larger values of $L$, namely, 
if $L^{-1} p^{1-1/2k} \log p\to 0$ as $p\to \infty$. We also note that  Lemma~\ref{lem:k} still holds for the case $k=1$.


\begin{lemma}\label{lem:box dense} Fix $d\geqslant 3$. There is an  constant $C > 0$ depending only on $d$, such that for 
a box
$\fB \subseteq \F_p^{d}$ as in~\eqref{eq:box} with the side length $L \geqslant C p^{1-\kappa_d} \log p$, where $\kappa_d$ is  as in~ \eqref{eq:kappa},  and $\cL_p$ as in Lemma~\ref{lem:LB Dens}, there 
is $\va \in \fB\cap \cL_p$.
 \end{lemma}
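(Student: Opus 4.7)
The plan is to exploit the scaling invariance of $\cL_p$ together with Lemma~\ref{lem:k} and a counting argument. First, I would note that the substitution $n \mapsto \lambda n$ in the definition of $T_{d,p}$ preserves its absolute value, so $\cL_p$ is invariant under the action $\lambda \cdot \va = (\lambda a_1, \lambda^2 a_2, \ldots, \lambda^d a_d)$ for every $\lambda \in \F_p^*$. Combined with Lemma~\ref{lem:Weil}, which bounds by $O(p^{d-1})$ the number of vectors with a vanishing coordinate, Lemma~\ref{lem:LB Dens} provides a subset $\cL_p^* \subseteq \cL_p \cap (\F_p^*)^d$ of cardinality $\gg p^d$ to which Lemma~\ref{lem:k} can be applied.

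The key counting identity, obtained from the scaling invariance, is
\[
(p-1)\,|\cL_p \cap \fB| \;\geqslant\; \#\{(\va, \lambda) \in \cL_p^* \times \F_p^* : \lambda \cdot \va \in \fB\},
\]
since each target $\vb \in \cL_p \cap \fB$ is the image of exactly $p-1$ admissible pairs. To lower-bound the right-hand side, I would fix the index $\nu$ realising the maximum in~\eqref{eq:kappa} and, for each $\va \in \cL_p^*$, apply Lemma~\ref{lem:k} with $k = \nu$ to the first $\nu$ coordinates of the orbit, which yields $\gg L^\nu p^{1-\nu}$ admissible values of $\lambda$ under the condition $L \geqslant Cp^{1-1/(2\nu)}\log p$. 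A separate application of Lemma~\ref{lem:KS} together with the Weil bound to the remaining $d-\nu$ coordinates, whose underlying polynomial $\sum_{j=\nu+1}^{d} h_j a_j \lambda^j$ is of degree $d$, produces the complementary threshold $L \geqslant Cp^{1-1/(2d-\nu)}\log p$. Averaging over $\va \in \cL_p^*$ and dividing by $p-1$ then yields $|\cL_p \cap \fB| \geqslant 1$ as soon as $L \geqslant Cp^{1-\kappa_d}\log p$.

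The main obstacle is the proper combination of the two equidistribution estimates: restricting $\lambda$ to the set satisfying the first-block condition biases the conditional distribution of the remaining coordinates, so one has to average over the density-$c_d$ set $\cL_p^*$ to smooth out these correlations. The exponent $1/(2d-\nu)$ in the second threshold, rather than the naive $1/(2(d-\nu))$, arises because the Weil bound must be applied to the full degree-$d$ polynomial---whose monomials are supported on the indices $\nu+1,\ldots,d$---rather than to an independent $(d-\nu)$-dimensional curve. Balancing $1/(2\nu)$ against $1/(2d-\nu)$, which is optimised at $\nu \approx 2d/3$, produces the threshold $\kappa_d$ of~\eqref{eq:kappa}.
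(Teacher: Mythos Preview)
Your high-level strategy---exploit the scaling invariance of $\cL_p$, split the $d$ coordinates into a first block of size $\nu$ and a second block of size $d-\nu$, and optimise over $\nu$---matches the paper. However, the mechanism you propose for the second block is not correct, and the crucial step is missing.

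You claim that the threshold $L\gg p^{1-1/(2d-\nu)}$ for the second block ``arises because the Weil bound must be applied to the full degree-$d$ polynomial''. This cannot be right: the Weil bound for $\sum_{\lambda\in\F_p}\ep\bigl(\sum_{j=\nu+1}^{d}h_ja_j\lambda^{j}\bigr)$ is $O(\sqrt{p})$ regardless of whether the degree is $d$ or $d-\nu$; the degree affects only the implied constant. If, for a fixed $\va$, you combine Weil with Lemma~\ref{lem:KS} on the last $d-\nu$ coordinates over \emph{all} $\lambda\in\F_p^{*}$, the resulting threshold is $L\gg p^{1-1/(2(d-\nu))}$, not $p^{1-1/(2d-\nu)}$. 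And you cannot apply Weil over the restricted set $\Lambda_{\va}$ from the first block, since $\Lambda_{\va}$ is not an interval. Your remark that one must ``average over $\cL_p^{*}$ to smooth out these correlations'' correctly identifies the obstacle but does not explain how to overcome it.

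The paper resolves this differently. By pigeonhole it first \emph{fixes} a tuple $(a_1,\ldots,a_\nu)\in(\F_p^{*})^{\nu}$ for which the slice $\cL_{p,\nu}^{*}=\cL_p\cap\bigl(\{(a_1,\ldots,a_\nu)\}\times\F_p^{d-\nu}\bigr)$ has cardinality $\gg p^{d-\nu}$; this makes $\Lambda=\Lambda_\nu$ depend only on the fixed first block, not on $\va$. One then counts pairs $(\va,\lambda)\in\cL_{p,\nu}^{*}\times\Lambda$ with $(\lambda^{\nu+1}a_{\nu+1},\ldots,\lambda^{d}a_d)\in\fB_2$, and bounds the relevant character sum $W(\vh)$ by Cauchy--Schwarz in $\va$ followed by extension to all of $\F_p^{d-\nu}$ and orthogonality, obtaining $|W(\vh)|\ll \#\cL_{p,\nu}^{*}\,(\#\Lambda)^{1/2}$. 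It is precisely this factor $(\#\Lambda)^{1/2}\asymp L^{\nu/2}p^{(1-\nu)/2}$, not any feature of the Weil bound, that produces the exponent $1/(2d-\nu)$ after one solves $L^{d-\nu}(\#\Lambda)^{1/2}\gg p^{d-\nu}(\log p)^{d-\nu}$. Your sketch omits both the pigeonhole step and the Cauchy--Schwarz argument, and without them the claimed threshold does not follow.
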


\begin{proof}
Adjusting $C$ if necessary, we can assume that $p$ is large enough. 

Clearly, if $(a_1, \ldots, a_d) \in \cL_p$ then for any $\lambda\in \F_p^*$ 
we also have $(a_1\lambda, \ldots, a_d \lambda^d) \in \cL_p$.  Let $k$ be an integer such that  
$$
\kappa_d=\min\{ 1/2k ,  1/(2d-k) \}.
$$
By Lemma~\ref{lem:LB Dens} we conclude that there exists $(a_1, \ldots, a_k)\in \F_p^{k}$ with $a_i\neq 0$ for each $1\leqslant i\leqslant k$ such that 
$$
\# \cL_p \cap \left(\left\{a_1, \ldots, a_k\right\}\times \F_p^{d-k}\right) \gg p^{d-k}.
$$
For convenience we denote this set  by $\cL_{p, k}^{*}$.

Let $\fB= \F_p^{d}$ be a box with the side length $\ell(\fB) = L$, which we decompose in a natural way 
as $\fB=\fB_1\times \fB_2\subseteq \F_p^{k}\times \F_p^{d-k}$

Note that we have  $\#\fB_1=L^{k}$. Let 
$$
\Lambda_k=\{\lambda\in \F_p^{*}:~(\lambda a_1, \ldots, \lambda^{k}a_k)\in \fB_1\}.
$$
Then Lemma~\ref{lem:k} implies that 
\begin{equation}
\label{eq:Lk}
\# \Lambda_k \geqslant 0.5 L^{k}p^{1-k}
\end{equation}
provided the condition
\begin{equation}
\label{eq:L 1/2k}
L\geqslant C p^{1-1/2k} \log p
\end{equation}
is satisfied with a sufficiently large $C$.

We now fix a vector  $\vh=(h_{k+1}, \ldots, h_d)\in \F_p^{d-k}\setminus \{\0\}$ and consider the double exponential sums
$$
W(\vh)=\sum_{(a_1, \ldots, a_d)\in \cL_{p, k}^{*}}
\sum_{\lambda\in \Lambda_k} \ep \left(\sum_{j=k+1}^{d}h_ja_j \lambda^{j}\right).
$$
By the Cauchy-Schwarz inequality 
\begin{align*}
|W(\vh) |^2 & \leqslant \# \cL_{p, k}^*   \sum_{(a_1, a_2, \ldots, a_d)\in \cL_{p, k}^*} \left|\sum_{\lambda\in \Lambda_k}
\ep\(\sum_{j=k+1}^d h_ja_j \lambda^j\)\right|^2\\
& \leqslant \# \cL_p^*   \sum_{(a_k, \ldots, a_d)\in \F_p^{d-k}} \left|\sum_{\lambda\in \Lambda}
\ep\(\sum_{j=k+1}^d h_ja_j \lambda^j\)\right|^2.
\end{align*}
Now using that for any $z\in \C$ we have $|z|^2 = z \overline z$, and then changing the order of summations,  we 
obtain
\begin{align*}
|W(\vh) |^2 & \leqslant \# \cL_{p,k}^*   \sum_{\lambda,\mu \in \Lambda_k}
\sum_{(a_k, \ldots, a_d)\in \F_p^{d-k}}  
\ep\(\sum_{j=k+1}^d h_ja_j \(\lambda^j-\mu^j\)\) \\
 & \leqslant \# \cL_{p, k}^*   \sum_{\lambda,\mu \in \Lambda_k} \prod_{j=k+1}^d
\sum_{a_j \in \F_p}  
\ep\(\  h_ja_j \(\lambda^j-\mu^j\)\) .
\end{align*}
By the orthogonality of exponential functions, the last sum  vanishes unless for every $j=k+1, \ldots, d$ 
we have  $h_j \(\lambda^j-\mu^j\) = 0$. Since  $\vh$ is a nonzero vector of $ \F_p^{d-k}$, this is possible for at most 
$2d\#\Lambda_k$ pairs $\(\lambda,\mu\) \in \Lambda_k^2$, and in the case the inner sum is equal to $p^{d-k}$. 
Hence, for any nonzero vector $\vh \in \F_p^{d-k}$ we have 
$$
|W(\vh)|^ 2\ll  \# \cL_{p, k}^*   \#\Lambda_k p^{d-k}.
$$
Using that $\#\cL_{p, k}^{*}\gg p^{d-k}$, we now obtain
\begin{equation}\label{eq:Wh}
|W(\vh)|\ll  \# \cL_{p, k}^*   (\#\Lambda_k)^{1/2}.
\end{equation}

Let $R$ be the number of the  vectors $\(a_{k+1},\ldots a_d, \lambda\)\in \cL_{p, k}^{*}\times \Lambda_k$ such that 
\begin{equation}
\label{eq:B2}
(\lambda^{k+1}a_{k+1}, \ldots, \lambda^{d} a_d) \in \fB_2.
\end{equation}
Combining the bound~\eqref{eq:Wh} with Lemma~\ref{lem:KS},  we obtain
$$
R= \# \cL_{p, k}^{*} \# \Lambda_k ( L/p)^{d-k}+ O(\# \cL_{p, k}^{*} (\# \Lambda_k)^{1/2} (\log p)^{d-k}).
$$
Thus we conclude that $R>0$ when 
$$
L^{d-k} \# (\Lambda_k)^{1/2}\geqslant C_0p^{d-k} (\log p)^{d-k}
$$
for some constant $C_0$ depending only on $d$ and $k$. By~\eqref{eq:Lk} this condition becomes  
$$
L^{d-k}(0.5L^{k}p^{1-k})^{1/2}\geqslant C_0 p^{d-k} (\log p)^{d-k},
$$ 
and hence it is enough to request that 
\begin{equation}
\label{eq:L 1/(2d-k)}
L \geqslant  C p^{1-1/(2d-k)}(\log p)^{(d-k)/(d-k/2)}
\end{equation}
for a sufficiently large  constant $C$.

Combining the conditions~\eqref{eq:L 1/2k} and~\eqref{eq:L 1/(2d-k)}, and recalling  the definition of $\kappa_d$ in~\eqref{eq:kappa}, 
we conclude that there exists a large enough constant  $C$  such that the inequality  
$$L\geqslant C p^{1-\kappa_d}\log p$$ is sufficient  to guarantee that for some 
$\(a_{k+1},\ldots a_d, \lambda\)\in \cL_{p, k}^{*}\times \Lambda_k$  we have~\eqref{eq:B2}. 
Since we always have $(a_1\lambda, \ldots, a_k \lambda^{k})\in \fB_1$ when $\lambda \in  \Lambda_k$ and so the result now follows.  
\end{proof}

\begin{cor}\label{cor:dense}
Let $\cL_{p}$ be  defined as in Lemma~\ref{lem:LB Dens}. Then for any $k\in \N$ the set 
$$
 \bigcup_{\substack {p \geqslant k \\ p\text{ is prime} } } \cL_{p} \subseteq \Tor
$$ 
 is  dense in $\Tor$.   
 \end{cor}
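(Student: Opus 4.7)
The plan is to interpret the set-theoretic inclusion $\cL_p \subseteq \Tor$ via the scaling map $\va \mapsto \va/p$, and then reduce density in $\Tor$ to the existence statement of Lemma~\ref{lem:box dense}.

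First I would fix an arbitrary nonempty open axis-aligned box
$$
\cJ=\cJ_1\times \ldots \times \cJ_d\subseteq \Tor
$$
with common side length $\delta>0$; it suffices to hit every such $\cJ$. The preimage of $\cJ$ under the map $\F_p^{d}\to \Tor$, $\va\mapsto \va/p$, is a discrete box $\fB_p\subseteq \F_p^{d}$ in the sense of~\eqref{eq:box} with side length
$$
L_p\geqslant \lfloor p\delta\rfloor\geqslant p\delta-1,
$$
possibly wrapping around modulo $p$, but this is exactly the kind of box covered by the definition in~\eqref{eq:box}.

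Next, for $d\geqslant 3$, I would feed $\fB_p$ into Lemma~\ref{lem:box dense}. Since $\kappa_d>0$ depends only on $d$ and $\delta$ is fixed, the inequality
$$
p\delta-1\geqslant C p^{1-\kappa_d}\log p
$$
holds for all sufficiently large primes $p$, say $p\geqslant p_0(\delta,d)$. Choosing any prime $p\geqslant \max\{k,p_0(\delta,d)\}$, Lemma~\ref{lem:box dense} produces a vector $\va\in \fB_p\cap \cL_p$, equivalently a point $\va/p\in \cJ$ lying in the set $\cL_p$ viewed inside $\Tor$. As $\cJ$ was arbitrary, this gives density. For the residual case $d=2$, Lemma~\ref{lem:Gauss} shows $|T_{2,p}(\va)|=\sqrt p$ for every $\va=(a_1,a_2)$ with $a_2\neq 0$, so we may take $\cL_p\supseteq \F_p\times \F_p^{*}$; the points $\va/p$ with $\va\in \F_p\times \F_p^{*}$ become equidistributed in $\Tor$ as $p\to \infty$, which trivially yields density.

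The entire substance already resides in Lemma~\ref{lem:box dense}, so there is no real obstacle — only the bookkeeping of translating a continuous box of side length $\delta$ in $\Tor$ into a discrete box of side length $\approx p\delta$ in $\F_p^{d}$, and observing that $p^{1-\kappa_d}\log p=o(p)$ as $p\to \infty$ guarantees that the hypothesis of Lemma~\ref{lem:box dense} is eventually met regardless of the lower cutoff $k$.
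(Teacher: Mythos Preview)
Your argument is correct and follows essentially the same route as the paper: pull back a small box in $\Tor$ to a discrete box in $\F_p^{d}$ of side length $\approx p\delta$, note that this exceeds $Cp^{1-\kappa_d}\log p$ once $p$ is large enough, and invoke Lemma~\ref{lem:box dense}. Your write-up is in fact a bit more careful than the paper's, since you fix the target box first and let $p$ grow, and you explicitly dispose of the case $d=2$ via Lemma~\ref{lem:Gauss} (Lemma~\ref{lem:box dense} is only stated for $d\geqslant 3$).
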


\begin{proof}
Let $\sfB$ be a box of $\Tor$ with the side length
$$
\ell(\sfB) = 2 C p^{-\kappa_d}\log p, 
$$  
where  $C$ is as in  Lemma~\ref{lem:box dense}. Define 
$$
\fB= \left \{ \va\in \F_p^{d}:~  \va/p \in \sfB\right \}.
$$
By Lemma~\ref{lem:k} there exists $\vb\in \fB$  such that 
$$
|T_{d, p}(\vb)| \geqslant \gamma_d  \sqrt{p}
$$
provided that  $p$ is large enough. Thus, we conclude that 
$$
 \vb/p \in \cL_p \cap \sfB. 
$$
Since this holds for any box $\sfB$ of $\Tor$,   the result follows.
\end{proof}

\begin{remark} 
\label{rem:why1}
{\rm For the case  $d=2$, Corollary~\ref{cor:dense}  follows immediately from Lemma~\ref{lem:Gauss}. 
However in general Lemma~\ref{lem:Gauss} does not hold for $d\geqslant 3$ and in fact  
$\va \in \F_p^{d}$ with vanishing sums $T_{d, p}(\va)=0$ are often densely distributed as well.

For instance, for $d\geqslant 3$ and a  prime number $p$ with $\gcd(d, p-1)=1$,  the map: $x\rightarrow x^{d}$ permutes $\F_p$. 
Hence, for any $\lambda \in \F_p^*$ we have 
\begin{align*}
\sum_{n=0}^{p-1}\ep\(\sum_{j=1}^d \binom{d}{j} \lambda^j   n^j\) &
=  \sum_{n=0}^{p-1}\ep\((\lambda n+1)^{d}-1\)\\
& = \sum_{n=0}^{p-1}\ep\(n^{d}-1\)=  \sum_{n=0}^{p-1}\ep\(n\)=0.
\end{align*}
Assuming $p > d$ we see that 
$$
 \binom{d}{j}  \not \equiv 0 \pmod p, \qquad j =1, \ldots, d.
$$
By Lemma~\ref{lem:k} for any   box $\fB  \subseteq \F_p^{d}$ with the side length $\ell(\fB) \geqslant C p^{1-1/2d}\log p$
for some constant $C$ there exists $\lambda\in \F_p^{*}$ such that 
$$
\(  \binom{d}{1} \lambda , \ldots,\binom{d}{d}  \lambda^{d} \)\in \fB.
$$ 
Therefore we conclude that for any $k\in \N$ the set
$$
\bigcup_{\substack {p \geqslant k \\ p\text{ prime} } }\{\va/p:~\va \in \F_p^{d}, \ T_{d, p}(\va)=0\}
$$
is a dense subset of $\Tor$.
}
\end{remark}

\subsection{Large Weyl sums}
\label{sec:weyl sum}

We are going to show that the small neighbourhood of $\cL_{p}$  still have large exponential sums. 
Namely let $\sfB(\vx, \delta)$ denotes the cubic box centered at $\vx\in \Tor$ with  the side length  
$$
\ell\(\sfB(\vx, \delta)\) = 2\delta>0.
$$

 For each   $\tau> 0$ and a prime $p$ we define 
$$
\L_{\tau, p} =\bigcup_{\va\in \cL_{p}} \sfB(\va/p, p^{-\tau}). 
$$

We also use   $\gamma_d$  from  Lemma~\ref{lem:LB Dens}.

We use the following version of summation by parts.
Let $a_n$ be a sequence and for each $t\ge 1$ denote   
$$
A(t)=\sum_{1\le n\le t} a_n.
$$
Let $\psi: [1, N]\rightarrow \R$ be a differential function. Then 
$$
\sum_{n=1}^{N} a_n \psi(n)=A(N)\psi(N)-\int_{1}^{N} A(t)\psi'(t)dt.
$$

\begin{lemma} 
\label{lem: large sum N}
 Let $\vx\in \L_{\tau, p}$ for some  $\tau>0$ and prime $p$. There exists an absolute constant $c=c(d)$ such that if 
$$
c p^{\tau/d} (\log p)^{-1}\geqslant  N \geqslant  p \mand p \mid N
$$ 
then 
$$
|S_d(\vx; N)| \gg  N p^{-1/2}.
$$
\end{lemma}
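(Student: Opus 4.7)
The plan is to show that $\vx$ is so close to a rational point $\va/p$ with $\va \in \cL_p$ that the Weyl sum $S_d(\vx;N)$ essentially reproduces the rational exponential sum $T_{d,p}(\va)$, which by Lemma~\ref{lem:LB Dens} has size $\gg \sqrt{p}$. Concretely, unpack $\vx \in \L_{\tau,p}$ as $\vx = \va/p + \vy$ with $\va \in \cL_p$ and $\|\vy\|_\infty \leqslant p^{-\tau}$, so that
$$
S_d(\vx;N) = \sum_{n=1}^{N} \ep\(\sum_{j=1}^d a_j n^j\) \e\(\sum_{j=1}^d y_j n^j\).
$$

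Next I would exploit the hypothesis $p \mid N$ to reduce the rational part to a complete sum. Writing $N = pM$ and $n = kp+m$ with $0 \leqslant k < M$ and $1 \leqslant m \leqslant p$, the rational factor depends only on $m \bmod p$, giving
$$
S_d(\vx;N) = \sum_{m=1}^{p} \ep\(\sum_{j=1}^d a_j m^j\) \sum_{k=0}^{M-1} \e\(\sum_{j=1}^d y_j(kp+m)^j\).
$$
If the inner factor were simply $M$, the result would be $M\, T_{d,p}(\va)$, whose modulus is at least $M \gamma_d \sqrt{p} = \gamma_d N p^{-1/2}$ by Lemma~\ref{lem:LB Dens}. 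So the task reduces to showing the inner factor is $M(1+o(1))$.

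For this, I would use the crude bound $|{\e(t)-1}| \leqslant 2\pi|t|$ together with $\|\vy\|_\infty \leqslant p^{-\tau}$ and $kp+m \leqslant N$ to get
$$
\left|\e\(\sum_{j=1}^d y_j(kp+m)^j\) - 1\right| \leqslant 2\pi d\, p^{-\tau} N^d,
$$
uniformly for all $n \leqslant N$. The hypothesis $N \leqslant 0.25\, \gamma_d^{1/d} p^{(2\tau-1)/(2d)}$ rearranges to $N^d p^{-\tau} \leqslant (0.25)^d \gamma_d p^{-1/2}$, so the above error is $\leqslant 2\pi d (0.25)^d \gamma_d p^{-1/2}$. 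Summing trivially over all $m$ and $k$ and using the triangle inequality, the deviation of $S_d(\vx;N)$ from $M\, T_{d,p}(\va)$ is at most
$$
Mp \cdot 2\pi d (0.25)^d \gamma_d p^{-1/2} = 2\pi d (0.25)^d \gamma_d N p^{-1/2}.
$$

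Finally I would verify $2\pi d (0.25)^d < 1$ for every $d \geqslant 2$ (which holds, since $4^d > 2\pi d$ already for $d=2$), so the error is strictly dominated by the main term $\gamma_d N p^{-1/2}$, yielding $|S_d(\vx;N)| \geqslant (1 - 2\pi d (0.25)^d)\gamma_d N p^{-1/2} \gg N p^{-1/2}$. The only mildly delicate step is the constant management in the final inequality; the essential point is purely a balance between the radius $p^{-\tau}$ of the box and the degree-$d$ growth of $n^j$, which is exactly what the hypothesis on $N$ encodes.
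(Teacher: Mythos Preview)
Your proof is correct and follows essentially the same approach as the paper: approximate $S_d(\vx;N)$ by the rational sum $S_d(\va/p;N)=(N/p)T_{d,p}(\va)$ using $|\e(t)-1|\ll|t|$, invoke Lemma~\ref{lem:LB Dens} for the main term, and check that the hypothesis on $N$ makes the error strictly smaller. The only cosmetic difference is that the paper bounds $|S_d(\vx;N)-S_d(\va/p;N)|$ directly term-by-term rather than via your block decomposition $n=kp+m$, but the arithmetic and the constant bookkeeping ($4^d>2\pi d$ versus the paper's $4^d\geqslant 8d$) are the same.
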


\begin{proof} For any $\vx=(x_1, \ldots, x_d)\in \L_{\tau, p}$ there exist $\va=(a_1, \ldots, a_d) \in \cL_p$ such that 
$$
\left\| (x_1, \ldots, x_d)- (a_1/p, \cdots, a_d/p)  \right\|_{L^\infty} <p^{-\tau}, 
$$
where $\left\| \vz \right\|_{L^\infty} $ is the $L^\infty$-norm in $\R^d$.  Let $\delta_j=x_j-a_j/p, 1\le j\le d$. Applying summation by parts we obtain
\begin{equation}\label{eq:Abel}
\begin{split}
S_d(\vx; N) &-S_d(\va/p; N)\\
&=\sum_{n=1}^N\e_p \left (\sum_{j=1}^da_jn^j \right ) \left  (\e\(\sum_{j=1}^d \delta_j n^j\)-1 \right )\\
&=S_d(\va/p; N) \psi(N)-\int_{1}^{N} A(t)\psi'(t)dt,
\end{split}
\end{equation} 
where 
$$
A(t)=\sum_{1\le n\le t} \e_p \left (\sum_{j=1}^da_jn^j \right ) \mand
\psi(t)=  \e\(\sum_{j=1}^d \delta_j t^j\)-1 .
$$
Note that for any $u\in \R$ we have 
$$
|\e(u)-1|\le 2|u|.
$$
Combining with $|\delta_j|<p^{-\tau}, 1\le j\le d$ we obtain 
\begin{equation}
\label{eq:psi}
|\psi(N)|\le 2dN^{d}p^{-\tau}.
\end{equation}
For the integral part of~\eqref{eq:Abel} we derive 
\begin{equation*}
\begin{split}
\int_{1}^N A(t)  &\e\(\sum_{j=1}^d \delta_j t^j\) \(2 \pi i \sum_{j=1}^d j \delta_j t^{j-1}\)dt\\
&\le 2\pi p^{-\tau}\max_{1\le t\le N} |A(t)| \int_{1}^N \sum_{j=1}^d j t^{j-1} dt\\
&\le 4d \pi N^d p^{-\tau} \max_{1\le t\le N} |A(t)|.
\end{split}
\end{equation*}
Thus combining with~\eqref{eq:psi} and the definition of $A(t)$, and 
using bound~\eqref{eq:com} on incomplete sums,  we derive 
\begin{equation} 
\label{eq:DS}
\begin{split}
S_d(\vx; N) -S_d(\va/p; N) &\le  8d \pi N^d p^{-\tau}\max_{1\le t\le N} |A(t)|\\
& \le c_0 N^{d+1}  p^{-\tau-1/2} \log p,
\end{split}
\end{equation}
where $c_0>0$ is some  constant which  depends on $d$ only.

%

Since $p\mid N$,   using the periodicity of function $\ep(n)$,  we obtain 
\begin{equation}
\label{eq:Sa_Ta}
|S_d(\va/p; N)|  =  Np^{-1} |T_{d, p}(\va)|   \geqslant 0.5 \gamma_d N/p^{1/2}.
\end{equation}
Combining~\eqref{eq:DS} and~\eqref{eq:Sa_Ta} we obtain
$$
\left|S_d(\vx; N) \right| \geqslant    0.5 \gamma_d N p^{-1/2} -c_0 N^{d+1}  p^{-\tau-1/2} \log p  \ge 0.25  \gamma_d N p^{-1/2} 
$$
provided  
$$
N \le c   p^{\tau/d}(\log p)^{-1},
$$ 
for a sufficiently small constant $c$ (depending only on $d$), 
which gives the desired result.  
\end{proof}

We formulate some notation for our using on the lower bound of the Hausdorff dimension of $\cE_{\alpha, d}$.

\begin{lemma} 
\label{lem:twoboxes}
Let $\tau>d$.
For any $\varepsilon>0$ there exists $p_{\varepsilon, d}$ such that for any $p>p_{\varepsilon, d}$ and any cubic box $\sfB\subseteq \Tor$ with the side length  $\ell(\sfB) = p^{-\kappa_d+\varepsilon}$ there exists a box $\sfC\subseteq \sfB$ with the side length  $\ell(\sfC)  = p^{-\tau}$ and such that for 
$N=p \fl{c p^{\tau/d-1} (\log p)^{-1}}$, where  $c$ is  as in Lemma~\ref{lem: large sum N}, and all $\vx \in \sfC$, we have 
$$
|S_d(\vx; N)| \gg N^{1-d/2\tau}(\log N)^{-d/2\tau}. 
$$
\end{lemma}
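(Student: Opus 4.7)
The plan is to first locate a rational point $\va/p$ with $\va \in \cL_p$ well inside $\sfB$, then to take $\sfC$ to be a small cubic box centered at $\va/p$ so that $\sfC \subseteq \sfB \cap \sfB(\va/p, p^{-\tau}) \subseteq \sfB \cap \L_{\tau, p}$, and finally to apply Lemma~\ref{lem: large sum N} to produce the lower bound for $|S_d(\vx; N)|$ on $\sfC$. The only subtle point is step one: one cannot simply apply Lemma~\ref{lem:box dense} directly to $\sfB$, since the resulting $\va/p$ may sit too close to the boundary of $\sfB$, leaving no room for a sub-box of side length $p^{-\tau}$. I would therefore invoke Lemma~\ref{lem:box dense} with the concentric cubic sub-box $\sfB' \subseteq \sfB$ of side length $p^{-\kappa_d + \varepsilon}/2$; in discrete terms this is a box in $\F_p^{d}$ of side length $\sim p^{1 - \kappa_d + \varepsilon}/2$, which exceeds $C p^{1 - \kappa_d} \log p$ for all $p \geqslant p_{\varepsilon, d}$. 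This supplies some $\va \in \cL_p$ with $\va/p \in \sfB'$, and then taking $\sfC$ to be the cubic box of side length $p^{-\tau}$ centered at $\va/p$, the inequality $p^{-\tau}/2 < p^{-\kappa_d + \varepsilon}/4$ (valid for $p$ large, since $\tau > d + 1/2 > \kappa_d$) forces $\sfC \subseteq \sfB$. By construction also $\sfC \subseteq \sfB(\va/p, p^{-\tau}) \subseteq \L_{\tau, p}$.

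Next I would verify the hypotheses of Lemma~\ref{lem: large sum N} for the prescribed $N = p\fl{0.25 \gamma_d^{1/d} p^{(2\tau-1)/2d - 1}}$. The divisibility $p \mid N$ is built in; dropping the floor yields the upper bound $N \leqslant 0.25 \gamma_d^{1/d} p^{(2\tau-1)/2d}$; and the lower bound $N \geqslant p$ reduces to $0.25 \gamma_d^{1/d} p^{(2\tau-1)/2d - 1} \geqslant 1$, which holds for $p \geqslant p_{\varepsilon, d}$ because the hypothesis $\tau > d + 1/2$ makes the exponent $(2\tau - 1)/2d - 1$ strictly positive. Lemma~\ref{lem: large sum N} then yields $|S_d(\vx; N)| \gg N p^{-1/2}$ for every $\vx \in \sfC$.

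Finally I would match exponents: for $p \geqslant p_{\varepsilon, d}$ the fractional part of $0.25 \gamma_d^{1/d} p^{(2\tau - 1)/2d - 1}$ is absorbed by, say, half of the leading term, so $N \geqslant 0.125 \gamma_d^{1/d} p^{(2\tau - 1)/2d}$, which gives $N^{d/(2\tau - 1)} \gg \sqrt{p}$ and therefore $N p^{-1/2} \gg N^{1 - d/(2\tau - 1)}$, exactly the claimed bound. The only mildly delicate part of the argument is the geometric placement of $\sfC$ strictly inside $\sfB$, which is why I pass to the shrunken sub-box $\sfB'$ before invoking Lemma~\ref{lem:box dense}; once this is handled, everything else is a routine check of the ranges of $N$ and $p$ together with the rewriting of $\sqrt{p}$ as a power of $N$.
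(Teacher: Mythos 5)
Your proof is correct and follows essentially the same route as the paper: locate $\va/p\in\cL_p$ inside a concentric sub-box of $\sfB$ via Lemma~\ref{lem:box dense} (the paper uses shrinkage factor $\ell(\sfB)/5$, you use $\ell(\sfB)/2$, an inessential difference), center $\sfC$ there, and apply Lemma~\ref{lem: large sum N}. Your write-up is a bit more explicit than the paper's in checking the hypotheses $p\mid N$, $p\leqslant N\leqslant 0.25\gamma_d^{1/d}p^{(2\tau-1)/2d}$, and the exponent-matching $Np^{-1/2}\gg N^{1-d/(2\tau-1)}$, all of which the paper compresses into the phrase ``by the choice of $N$ and the condition $\tau>d+1/2$.''
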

\begin{proof}
Let $\sfB=\sfB(\vz, \ell(\sf B)/2)$ be the box.  For the box $\sfB(\vz, \ell(\sfB)/5)$,  
 Lemma~\ref{lem:box dense} implies that there exists a point   
$$\vc\in \cL_p \cap \sfB(\vz, \ell(\sfB)/5)$$
provided $p$ is large enough.  Let 
$
\sfC=\sfB(\vc, p^{-\tau}/2).
$
The condition $\tau>d$ gives $\tau>\kappa_d-\varepsilon$, and hence $\sfC\subseteq \sfB$.

By the choice of $N=p \fl{c p^{\tau/d-1} (\log p)^{-1}}$ and the condition $\tau >d$, 
Lemma~\ref{lem: large sum N} implies  that  for all $\vx\in \sfC$ we have 
$$
|S_d(\vx; N)| \gg  N^{1-d/2\tau}(\log N)^{-d/2\tau}
$$ 
which gives the desired result.
\end{proof}

\begin{definition}[$(a, b, c)$-patterns] \label{def:abc}  
{\rm Let $a>b>c>0$ and $a/b \in \Z$. Let $\sfB$ be a box with  with the side length $\ell(\sfB) = a$. We divide the box $\sfB$ into $(a/b)^{d}$ smaller  boxes in a natural way. For each of these $(a/b)^{d}$ boxes we pick a   smaller box, at  an  {\it arbitrary\/} location 
with the side length $c$. The resulting configuration  of  $(a/b)^{d}$ boxes with the side length $c$ is called an {\it $(a, b ,c)$-pattern\/}.} 
\end{definition}

An illustrative example of an $(a, b ,c)$-pattern is given  in Figure~\ref{fig1}.

\begin{figure}[H]
\centering 
\includegraphics[width=0.3\textwidth]{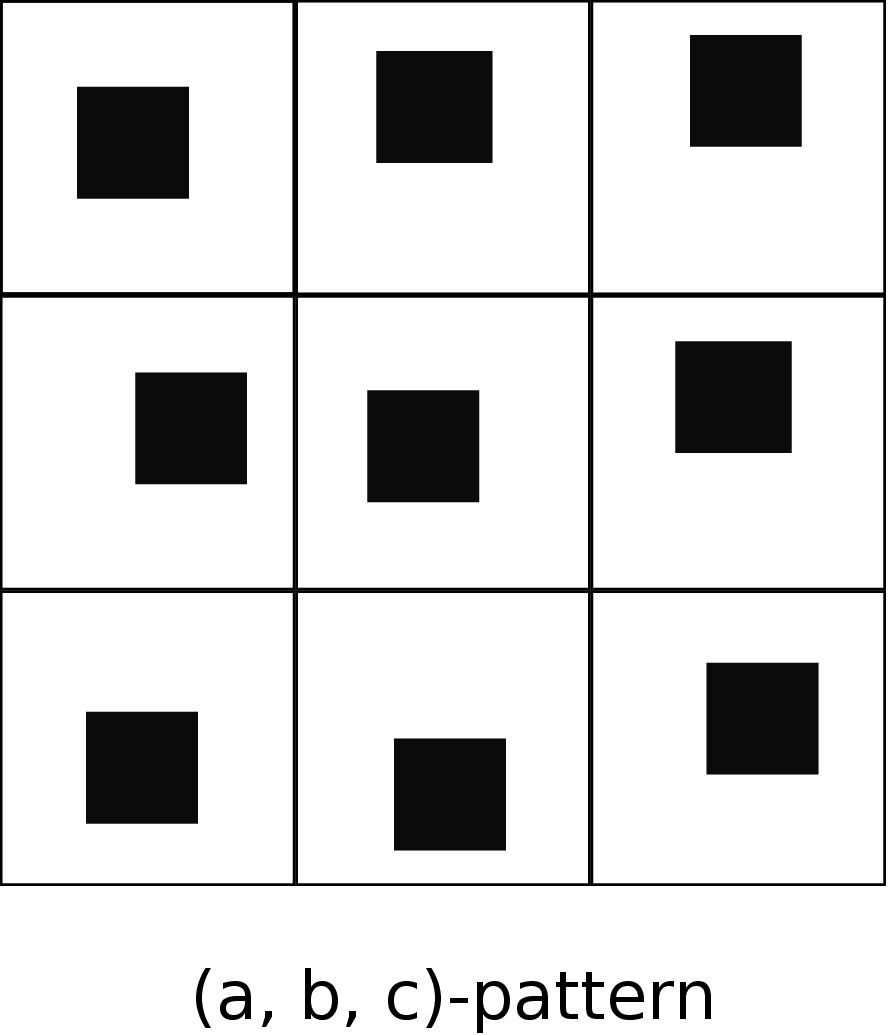}
\caption{An $(a, b, c)$-pattern with $a/b=3$   and $d=2$. }
\label{fig1}
\end{figure}

 We note that each $(a, b ,c)$-pattern is a subset of $\sfB$.
For our applications we find $(a, b, c)$-patterns such that the Weyl sums are large inside of the $(a/b)^{d}$  small boxes.
We show that  for any box $\sfB\subseteq \Tor$ there are  $(a, b, c)$-patterns which admit large Weyl sums. More precisely we have the following.

\begin{lemma}
\label{lem:manyboxes} 
Let $\tau, \varepsilon$ and $p_{\varepsilon,d}$ be the same as in Lemma~\ref{lem:twoboxes}. Let $p>p_{\varepsilon, d}$ and $\sfB\subseteq \Tor$ with the side length $\ell(\sfB) >10p^{-\kappa_d+\varepsilon}$. There exists $b$ such that $p^{-\kappa_d+\varepsilon}\leqslant b\leqslant 2p^{-\kappa_d+\varepsilon}$ and $\ell(\sfB)/b\in \Z$. Furthermore there exists a $(\ell(\sfB), b, p^{-\tau})$-pattern, which we denote by $\Upsilon_B$,  such that for 
$$
N=p \fl{c p^{\tau/d-1} (\log p)^{-1}}
$$ 
and all $\vx \in \Upsilon_{\sfB}$ we have 
$$
|S_d(\vx; N)|\gg N^{1-d/2\tau}(\log N)^{-d/2\tau}.
$$
\end{lemma}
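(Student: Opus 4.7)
The plan is to partition $\sfB$ into many sub-boxes each of side length slightly above the threshold $s := p^{-\kappa_d + \varepsilon}$, and then apply Lemma~\ref{lem:twoboxes} independently inside each one. Since the hard analytic work (producing $\cL_p$-points via Lemma~\ref{lem:box dense} and converting the Weil-type lower bound of Lemma~\ref{lem:LB Dens} into a lower bound on $S_d$ via Lemma~\ref{lem: large sum N}) is already done, this lemma is essentially a quantified packing / bookkeeping statement.

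First I would pick the scale $b$. Because $\ell(\sfB) > 10 s$, the real interval $[\ell(\sfB)/(2s),\, \ell(\sfB)/s]$ has length exceeding $5$, so it contains an integer $M$. Setting $b := \ell(\sfB)/M$ automatically gives $\ell(\sfB)/b = M \in \Z$ together with $s \leqslant b \leqslant 2s$, as required. I then partition $\sfB$ into $M^d$ congruent cubic sub-boxes $\sfB_1, \ldots, \sfB_{M^d}$ of side length $b$. Since $b \geqslant s$, inside each $\sfB_j$ I select some sub-box $\sfB_j^\ast \subseteq \sfB_j$ of side length exactly $s = p^{-\kappa_d + \varepsilon}$, to which Lemma~\ref{lem:twoboxes} is directly applicable. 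This produces a cubic box $\sfC_j \subseteq \sfB_j^\ast \subseteq \sfB_j$ of side length $p^{-\tau}$ on which
\[
|S_d(\vx; N)| \gg N^{1 - d/(2\tau - 1)}
\]
holds for the common value $N = p \fl{0.25 \gamma_d^{1/d} p^{(2\tau - 1)/2d - 1}}$. Note that the hypothesis $\tau > d + 1/2$ implies $\tau > \kappa_d - \varepsilon$, so $p^{-\tau} \ll s \leqslant b$ and indeed $\sfC_j$ fits inside $\sfB_j^\ast$.

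Finally I set $\Upsilon_{\sfB} := \bigcup_{j=1}^{M^d} \sfC_j$. By construction this is precisely an $(\ell(\sfB), b, p^{-\tau})$-pattern in the sense of Definition~\ref{def:abc}: one cubic box of side $p^{-\tau}$, at an arbitrary location, inside each of the $(\ell(\sfB)/b)^d = M^d$ equal sub-boxes of side $b$. The Weyl-sum lower bound holds uniformly on every $\sfC_j$ and hence on all of $\Upsilon_{\sfB}$, which completes the argument. I do not foresee any serious obstacle here; the only genuinely new ingredient beyond Lemma~\ref{lem:twoboxes} is the elementary choice of the integer $M$ guaranteeing simultaneously the dyadic bound on $b$ and the divisibility $\ell(\sfB)/b \in \Z$, and this is handled by the gap hypothesis $\ell(\sfB) > 10 s$.
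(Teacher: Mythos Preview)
Your proof is correct and follows essentially the same route as the paper's: partition $\sfB$ into $(\ell(\sfB)/b)^d$ congruent sub-boxes, apply Lemma~\ref{lem:twoboxes} inside each to obtain a box $\sfC_j$ of side $p^{-\tau}$ with the required Weyl-sum lower bound, and set $\Upsilon_{\sfB}=\bigcup_j \sfC_j$. If anything you are slightly more careful than the paper, which neither spells out the elementary existence of $b$ nor the passage to a sub-box of side exactly $p^{-\kappa_d+\varepsilon}$ before invoking Lemma~\ref{lem:twoboxes}.
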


\begin{proof}
Since $\ell(\sfB)/b\in \Z$, we divide the box $\sfB$ into $q=(\ell(\sfB)/b)^{d}$ smaller boxes of equal sizes in a natural way. We label them by $\sfB_1, \ldots, \sfB_q$ for convenience. 

For each $\sf B_i$, $1\leqslant i\leqslant q$,  Lemma~\ref{lem:twoboxes} asserts that there exists a box $\sf C_i\subseteq \sfB_i$ with the side length $p^{-\tau}$, and for all $\vx\in \sf C_i$ we have the desired bound. 

We finish the proof by taking  $\Upsilon_B=\bigcup_{i=1}^{q}\sf C_i$.
\end{proof}

\subsection{Hausdorff dimension of  a class of Cantor sets}

By a repeated  application of Lemma~\ref{lem:manyboxes}, we find  large Weyl sums on  a Cantor-like set.  This implies  a lower bound for the Hausdorff dimension  of $\cE_{\alpha, d}\(\fQ\)$. 
In this section we investigate a general construction of Cantor-like sets.

Now we  show the construction of the Cantor sets by iterating the construction of $(a, b, c)$-patterns.

Let 
$$
\bdelta=\(\delta_k\)_{k=1}^\infty \mand \bell=(\ell_k)_{k=1}^\infty 
$$ 
such that for each $k =1,2,  \ldots$, we have  
$$
\delta_k >\delta_{k+1} \mand \ell_k>\ell_{k+1}.
$$
 For convenience we also denote 
\begin{equation}
\label{eq:delta0}
\delta_0 = \lambda(\fQ)^{1/d}
\end{equation}
the side length of $\fQ$.
 
 For each $k\geqslant 0$ we ask that the  triple $(\delta_k, \ell_{k+1}, \delta_{k+1})$  satisfies the condition on $(a,b,c)$  in   Definition~\ref{def:abc}. In particular, we always assume that 
$$
\delta_{k}/\ell_{k+1}  \in \Z
$$ 
and  we denote   
\begin{equation}
\label{eq:qk}
q_{k+1}=\(\delta_{k}/\ell_{k+1}\)^{d}.
\end{equation}
for every $k =0, 1, \ldots$.

We start from the cube $\fQ$ and  take a $(\delta_0, \ell_1, \delta_1)$-pattern inside 
of $\fQ$.  

Let $\mathfrak{C}_1$ be the collection of these $q_1$ boxes. More precisely let 
$$
\fC_1=\{\sfB_i:~1\leqslant i\leqslant q_1\}.
$$ 
For each $\sfB_i$ we take a $(\delta_1, \ell_2, \delta_2)$-pattern  inside of $B_i$, and we denote these sub-boxes of $\sfB_i$ 
by $\sfB_{i, j}$ with $1\leqslant j\leqslant q_2$.  
Let 
$$
\fC_2=\{\sfB_{i, j}:~1\leqslant i\leqslant q_1, 1\leqslant j \leqslant q_2\}.
$$
Figure~\ref{fig2} shows an example of this construction.

\begin{figure}[H]
\centering 
\includegraphics[width=0.6\textwidth]{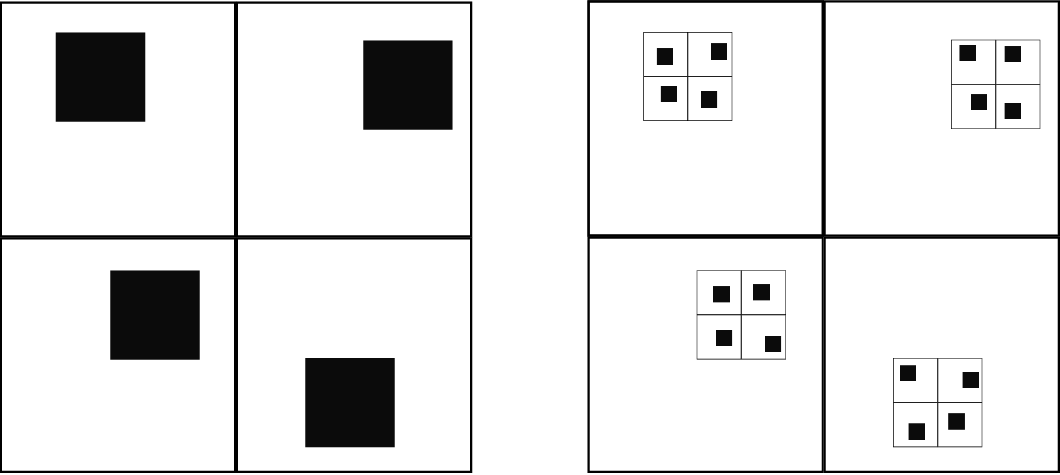}
\caption{The first two steps in the construction of $\FF$ with $\ell_1=1/2$ and $\delta_1/\ell_2=2$.}
\label{fig2}
\end{figure}

Suppose now we have $\fC_k$ which is a collection of  $\prod_{i=1}^{k}q_k$ boxes with the side length $\delta_k$. For each of these box $\sfB$ we take a $(\delta_k, \ell_{k+1}, \delta_{k+1})$-pattern inside of the box $\sfB$.   
Let $\fC$ be the collections of these boxes, that is 
$$
\fC_{k+1}=\{\sfB_{i_1,\ldots, i_{k+1}}:~1\leqslant i_1\leqslant q_1, \ldots, 1\leqslant i_{k+1}\leqslant q_{k+1}\}.
$$
Our Cantor-like set is defined by  
$$
\FF=\bigcap_{k=1}^{\infty} \FF_k,
$$
where  
$$
\FF_k= \bigcup_{\sfB\in \fC_k} \sfB.
$$ 
There are many possible outcomes by the above construction, we let $\Omega(\fQ; \bdelta, \bell)$ 
denote all  possible patterns.

From our construction clearly we have $\FF_k \supseteq \FF_{k+1}$, and $\FF_k$ is a compact set, and hence $\FF$ is a nonempty compact set. Furthermore we obtain the lower bound of these Cantor sets by using the following  {\it mass distribution
 principle\/}~\cite[Theorem~4.2]{Falconer}.

\begin{lemma} 
\label{lem:mass} 
Let $\cX \subseteq \R^{d}$ and let $\nu$ be a measure on $\R^{d}$ such that $\nu(\cX)>0$. If for any box $B(\vx, r)$ with $0<r\leqslant \varepsilon_0$ for some $\varepsilon_0>0$ we have 
$$
\nu(B(\vx, r))\ll r^{s},
$$
then the Hausdorff dimension of $\cX$ is at least $s$.
\end{lemma}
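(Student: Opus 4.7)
The plan is to establish a uniform positive lower bound on $\sum_{i} (\diam \cU_i)^{s}$ valid for every countable cover of $\cX$ and then invoke Definition~\ref{def:Hausdorff} directly. Concretely, I aim to produce a constant $c_0>0$ depending only on $\cX$, $\nu$ and the implicit constant in the hypothesis, such that every countable cover $\{\cU_i\}_{i=1}^{\infty}$ of $\cX$ satisfies $\sum_{i} (\diam \cU_i)^{s} \geqslant c_0$. Since this rules out covers with $\sum_i (\diam \cU_i)^s<c_0$, it follows that $s$ cannot exceed the infimum in Definition~\ref{def:Hausdorff}, whence $\dim \cX \geqslant s$.

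The reduction step is to restrict attention to covers whose sets all have diameter at most $\eps_0$: any $\cU_i$ with $\diam \cU_i > \eps_0$ already contributes at least $\eps_0^{s}$ to the sum, so the desired inequality is automatic in that case, and otherwise the mass bound from the hypothesis applies uniformly to all the $\cU_i$. For such a cover I may also assume each $\cU_i$ meets $\cX$ and pick some $\vx_i \in \cU_i$; setting $r_i=\diam \cU_i$, the inclusion $\cU_i \subseteq B(\vx_i, r_i)$ holds because $B(\vx,r)$ is, in the convention used in the paper, the cubic box of side $2r$ centered at $\vx$. The hypothesis then yields
$$
\nu(\cU_i) \leqslant \nu\bigl(B(\vx_i, r_i)\bigr) \ll (\diam \cU_i)^{s}.
$$
Summing over $i$ and using countable subadditivity of $\nu$ together with $\nu(\cX)>0$ gives
$$
0 < \nu(\cX) \leqslant \sum_{i=1}^{\infty} \nu(\cU_i) \ll \sum_{i=1}^{\infty} (\diam \cU_i)^{s},
$$
which furnishes a uniform positive lower bound $c_0 = \nu(\cX)/C$, independent of the cover, and completes the proof.

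There is essentially no analytic obstacle here; the argument is the classical mass distribution principle and its core is a single application of countable subadditivity of $\nu$. The only bookkeeping points requiring a moment of care are matching the ``box'' convention $B(\vx,r)$ in the hypothesis (a cube of side $2r$) with the diameter language of Definition~\ref{def:Hausdorff}, and discarding covering sets of diameter exceeding $\eps_0$ for which the hypothesis is not directly applicable. Both steps affect only the value of the implicit constant, so they do not affect the conclusion.
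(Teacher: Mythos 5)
The paper does not prove this lemma itself; it cites it as \cite[Theorem~4.2]{Falconer}. Your argument is the standard proof of the mass distribution principle as given in that reference, and it is correct: the passage from a cover to the comparison $\nu(\cU_i)\leqslant\nu(B(\vx_i,r_i))\ll(\diam\cU_i)^s$, the use of countable subadditivity, and the reduction to covers with small diameters all match the classical argument, with the only (correctly handled) bookkeeping being the paper's use of cubes $B(\vx,r)$ rather than Euclidean balls.
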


\begin{lemma}
\label{lem:dimension}  
Let  $\FF\in \Omega(\fQ;\bdelta, \bell)$ and  let $q_{k+1}$, $ k = 0,1, \ldots$, are given 
by~\eqref{eq:qk}.
Then
$$
\dim \FF = \liminf_{k\rightarrow \infty}\frac{\log \prod_{i=1}^{k} q_i}{- \log \delta_k}.
$$
\begin{proof}
We show the  the upper bound of $\dim {\FF}$ first. Let 
$$s>t=\liminf_{k\rightarrow \infty}\frac{\log \prod_{i=1}^{k} q_i}{- \log \delta_k}.$$ 
Then there exists a sequence $k_j$, $j\in   \N$,  such that  
$$
\prod_{i=1}^{k_j} q_i \leqslant \delta_{k_j}^{-s}.
$$
The construction of $\FF$ implies for each $j\in \N$
$$
{\FF}\subseteq \bigcup_{{\sfB}\in \fC_{k_j} } \sf B.
$$
Thus for any  $\varepsilon>0$ we obtain
$$
\sum_{\sfB\in \fC_{k_j}}  (\diam \sfB)^{s+\varepsilon} \ll \delta_{k_j}^{s+\varepsilon}\prod_{i=1}^{k_j} q_i 
\ll\delta_{k_j}^{\varepsilon} \rightarrow 0 \text{ as } j \rightarrow \infty.  
$$  
The definition of Hausdorff dimension, see Definition~\ref{def:Hausdorff}, implies  that  $\dim \FF \leqslant s+\varepsilon$. By the arbitrary choices of $\varepsilon>0$ and $s>t$ we obtain the upper bound 
$$
\dim \FF\leqslant t.
$$

Now we turn to the lower bound of $\dim \FF$.  We first define a measure on $\FF$ (natural measure). For each $k$ and any subset $\mathcal{A}$ let 
$$
\nu_k(\mathcal{A} )= \delta_k^{d} \prod_{i=1}^{k}\frac{1}{q_i} \int {\bf 1}_{\mathcal{A}\cap \FF_k}(x) dx, 
$$
where ${\bf 1}_\cV$ is the indicator function of a set $\cV$.  
Observe that for each ${\sfB}\in \fC_k$ we have 
$$
\nu_k(\sfB)=\prod_{i=1}^{k} q_i^{-1}.
$$ 
We note that the  measure $\nu_k$ weakly convergence to a measure $\nu$, see~\cite[Chapter~1]{Mattila1995}.

Let $\varepsilon>0$ then there exists $k_0$ such that for any $k\geqslant k_0$ we have 
\begin{equation}
\label{eq:lower}
\prod_{i=1}^{k}q_i\geqslant \delta_k^{-t+\varepsilon}. 
\end{equation}
Let $\sfB(\vx, r)\subseteq\Tor$ with $r\leqslant \delta_{k_0}$.  Then there exists $k\geqslant k_0$ such that
$$
\delta_{k+1}\leqslant r \leqslant \delta_k.
$$
Observe that  
$$
\nu(\sfB(\vx, r))\ll \left(\frac{r}{\ell_{k+1}}\right)^{d}  \prod_{i=1}^{k}q_i^{-1}.
$$ 
Applying $q_{k+1}=(\delta_k/\ell_{k+1})^d$, we obtain
$$
\nu(\sfB(\vx, r))\ll \left(\frac{r}{\delta_k}\right)^{d}  \prod_{i=1}^{k}q_i^{-1}.
$$
Combining with the estimate~\eqref{eq:lower} and the condition $\delta_{k+1}\leqslant r\leqslant \delta_k$, we have 
$$
\nu(\sfB(\vx, r))\ll   r^{d} \delta_k^{t-d-\varepsilon}\ll r^{t-\varepsilon}. 
$$
Applying the mass distribution principle  given in Lemma~\ref{lem:mass},  we have $\dim \FF\geqslant t-\varepsilon$. By the arbitrary choice of $\varepsilon>0$ we obtain that $\dim \FF\geqslant t$, which finishes the proof.
\end{proof}
\end{lemma}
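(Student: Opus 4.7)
The plan is to set $t := \liminf_{k\to\infty} \log\bigl(\prod_{i=1}^{k} q_i\bigr)\big/(-\log \delta_k)$ and establish the two matching inequalities $\dim \FF \leqslant t$ and $\dim \FF \geqslant t$. The upper bound is the easier covering calculation from the very construction of $\FF$, while the lower bound will come from Lemma~\ref{lem:mass} once a suitable natural measure has been built on $\FF$.

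For $\dim \FF \leqslant t$, I would fix $s > t$ and invoke the definition of $\liminf$ to produce a subsequence $k_j \to \infty$ with $\prod_{i=1}^{k_j} q_i \leqslant \delta_{k_j}^{-s}$. Since $\FF \subseteq \bigcup_{\sfB \in \fC_{k_j}} \sfB$, with each box of diameter $O(\delta_{k_j})$, for any $\varepsilon > 0$ we obtain
$$
\sum_{\sfB \in \fC_{k_j}} (\diam \sfB)^{s+\varepsilon} \ll \delta_{k_j}^{s+\varepsilon} \prod_{i=1}^{k_j} q_i \ll \delta_{k_j}^{\varepsilon} \to 0,
$$
so Definition~\ref{def:Hausdorff} gives $\dim \FF \leqslant s+\varepsilon$; sending $s\downarrow t$ and $\varepsilon\downarrow 0$ yields the upper bound.

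For $\dim \FF \geqslant t$, I would build the natural measure $\nu$ supported on $\FF$ that assigns mass $\bigl(\prod_{i=1}^k q_i\bigr)^{-1}$ to every box of $\fC_k$, as the weak limit of the probability measures $\nu_k$ uniform on $\FF_k$ (consistency $\nu_k(\sfB) = \nu_m(\sfB)$ for $\sfB\in \fC_k$, $m\geqslant k$, makes the limit routine). For a ball $\sfB(\vx, r)$ with $\delta_{k+1} \leqslant r \leqslant \delta_k$ one counts how many cells of $\fC_{k+1}$ it can meet: since the $q_{k+1}$ children inside each parent cell of $\fC_k$ sit in sub-blocks of side $\ell_{k+1}$ on a regular grid, the ball meets only $O((r/\ell_{k+1})^d)$ of them, and since $r\leqslant \delta_k$ only $O(1)$ parents are relevant. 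Combining this with $q_{k+1} = (\delta_k/\ell_{k+1})^d$ and the lower bound $\prod_{i=1}^k q_i \geqslant \delta_k^{-t+\varepsilon}$ (valid for $k$ large by definition of $t$) gives
$$
\nu\bigl(\sfB(\vx, r)\bigr) \ll \left(\frac{r}{\ell_{k+1}}\right)^{d} \prod_{i=1}^{k+1} q_i^{-1} = \left(\frac{r}{\delta_k}\right)^{d}\prod_{i=1}^{k} q_i^{-1} \ll r^{d}\, \delta_k^{t - d - \varepsilon} \ll r^{t-\varepsilon},
$$
where the last step exploits $t\leqslant d$ together with $r\leqslant \delta_k$. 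Lemma~\ref{lem:mass} then yields $\dim \FF \geqslant t-\varepsilon$, and $\varepsilon\to 0$ completes the argument.

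The step I expect to be most delicate is the geometric ball-counting estimate: the bound $O((r/\ell_{k+1})^d)$ genuinely uses that the $q_{k+1}$ children inside each parent are separated by the grid spacing $\ell_{k+1}$, not merely that they have size $\delta_{k+1}$; without this separation one would only control $\nu(\sfB(\vx,r))$ in terms of $(r/\delta_{k+1})^d$, which is far too weak. One should also verify that $t \leqslant d$ (immediate from $\FF \subseteq \Tor$) in order to flip the inequality correctly when replacing $\delta_k^{t-d-\varepsilon}$ by $r^{t-d-\varepsilon}$ in the last display.
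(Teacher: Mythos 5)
Your proposal matches the paper's proof step for step: the same covering argument via $\fC_{k_j}$ for the upper bound, and the same natural measure with $\nu(\sfB)=\prod_{i=1}^k q_i^{-1}$ on $\sfB\in\fC_k$ together with the ball-counting estimate for $\delta_{k+1}\leqslant r\leqslant \delta_k$ and Lemma~\ref{lem:mass} for the lower bound. If anything you are a notch more careful, correctly carrying the factor $\prod_{i=1}^{k+1}q_i^{-1}$ in the ball count where the paper's displayed intermediate line has a harmless index typo, and flagging that the $O\bigl((r/\ell_{k+1})^d\bigr)$ bound genuinely relies on the grid separation $\ell_{k+1}$ rather than merely the child diameter $\delta_{k+1}$.
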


\subsection{Monomial exponential sums}

We need the following elementary statement, see, for example~\cite[Equation~(82)]{Kor}  for a more general statement.

\begin{lemma} 
\label{lem:largevalue-mon}
Let  $a\in \Z$ with $\gcd(a, p)=1$, then for integer $d\ge 2$ 
$$
\sum_{n=1}^{p^{d}} \e\left (\frac{a n^{d}}{p^{d}}\right )=p^{d-1}.
$$
\end{lemma}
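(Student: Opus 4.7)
The plan is to split the summation range $\{1,\ldots,p^d\}$ according to whether $p\mid n$ or $\gcd(n,p)=1$, write
$$
\sum_{n=1}^{p^d}\e\!\left(\frac{an^d}{p^d}\right)
 =\sum_{\substack{n=1\\ p\mid n}}^{p^d}\e\!\left(\frac{an^d}{p^d}\right)
 +\sum_{\substack{n=1\\ \gcd(n,p)=1}}^{p^d}\e\!\left(\frac{an^d}{p^d}\right),
$$
and show that the first piece contributes exactly $p^{d-1}$ while the second piece vanishes.

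For the piece with $p\mid n$, I would substitute $n=pm$ with $m\in\{1,\ldots,p^{d-1}\}$. Since $an^d/p^d=am^d\in\Z$, every such summand equals $1$, yielding the contribution $p^{d-1}$.

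For the remaining piece I would use the parametrisation $n=n_0+p^{d-1}n_1$ with $n_0\in\{1,\ldots,p^{d-1}\}$ coprime to $p$ and $n_1\in\{0,\ldots,p-1\}$, which bijects onto the set of $n\in\{1,\ldots,p^d\}$ coprime to $p$. By the binomial theorem,
$$
(n_0+p^{d-1}n_1)^d\equiv n_0^d+d\,n_0^{d-1}p^{d-1}n_1\pmod{p^d},
$$
since every omitted term carries a factor $p^{k(d-1)}$ with $k\geqslant 2$ and $k(d-1)\geqslant 2(d-1)\geqslant d$ for $d\geqslant 2$. Hence
$$
\e\!\left(\frac{an^d}{p^d}\right)=\e\!\left(\frac{an_0^d}{p^d}\right)\,\ep\!\left(ad\,n_0^{d-1}n_1\right),
$$
and the inner sum over $n_1$ reduces to the complete geometric sum $\sum_{n_1=0}^{p-1}\ep(ad\,n_0^{d-1}n_1)$, which vanishes as soon as $p\nmid ad\,n_0^{d-1}$. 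Since $\gcd(a,p)=\gcd(n_0,p)=1$, this holds precisely when $p\nmid d$.

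The main obstacle is clarifying the arithmetic hypothesis: the identity as stated does require $p\nmid d$ (for example $p=d=2$ with $a=1$ yields $2+2i\neq p^{d-1}$), and this is the implicit assumption inherited from the applications of the lemma, in which one restricts to primes $p>d$. Granting this, the inner geometric sum is $0$ for every admissible $n_0$, so the contribution from the $\gcd(n,p)=1$ terms is $0$, and the total equals $p^{d-1}$ as claimed.
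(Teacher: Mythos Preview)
Your argument is correct and complete under the tacit hypothesis $p\nmid d$, which you rightly flag; in the paper's applications this is automatic since one works with primes $p>d$ (in fact $p\to\infty$). The paper itself does not supply a proof of this lemma but merely cites Korobov~\cite[Equation~(82)]{Kor} for a more general statement, so there is no in-paper argument to compare against. Your direct route---splitting off the $p\mid n$ terms (each contributing $1$) and killing the $\gcd(n,p)=1$ terms via the parametrisation $n=n_0+p^{d-1}n_1$, the congruence $(n_0+p^{d-1}n_1)^d\equiv n_0^d+d\,n_0^{d-1}p^{d-1}n_1\pmod{p^d}$, and the vanishing of the inner geometric sum over $n_1$---is the standard elementary derivation and is entirely satisfactory. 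Your counterexample $p=d=2$, $a=1$ is a useful sanity check confirming that the condition $p\nmid d$ cannot be dropped.
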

One can certainly  adapt the arguments in the proof of Lemma~\ref{lem: large sum N}  to get a lower bound 
on $\sigma_d(x; N)$. However we can achieve better results with the following approximate formula 
of Vaughan~\cite[Theorem~4.1]{Vau}. 

\begin{lemma} 
\label{lem:approx-mon}
Let 
$$
x =\frac{a}{q} + \xi
$$ 
with some relatively prime integers $a$ and $q \ge 1$. 
Then 
$$
\sigma_d(x; N) =\frac{1}{q} \sigma_d(a/q; q) \int_0^{N} \e\(\xi \gamma^d\) d \gamma
+ O\(q^{1/2 + o(1)} \(1+ |\xi| N^d\)^{1/2}\). 
$$
\end{lemma}

We now easily see that Lemma~\ref{lem:approx-mon} implies the following  result. 

\begin{lemma}
\label{lem:continuous-mon}
Let  $a\in \Z$ and let $p$ be a prime number such that $\gcd(a, p)=1$.  Let $x\in [0, 1)$ with  $|x-a/p^{d}|<p^{-\tau}$ for some $\tau>0$.  There exists an absolute constant $c> 0$ such 
that for any $\varepsilon >0 $  If 
$$
c p^{\tau/d}\geqslant  N \geqslant  p^{d/2+1 + \varepsilon}  
$$ 
then 
$$
|\sigma_d(x; N)| \geqslant 0.5  N p^{-1},
$$
provided that $p$ is large enough.
\end{lemma}

\begin{proof} 
Using  Lemma~\ref{lem:approx-mon} with $\xi = x-a/p^{d}$ we see that the assumed 
upper bound on $N$ implies that
 $$
 |\xi| N^d = |x-a/p^{d}| N^d<p^{-\tau} N^d \le c^d.
 $$ 
Hence taking $c$ small enough we obtain 
$$
\left|\int_0^{N} \e\(\xi \gamma^d\) d \gamma\right| \ge \frac{2}{3} N.
$$
Therefore by Lemmas~\ref{lem:largevalue-mon} and~\ref{lem:approx-mon}
$$
|\sigma_d(x; N)| \ge  \frac{2}{3 p^d} N  \sigma_d(a/p^d; p^d) + O\(p^{d/2 + o(1)}\)
=   \frac{2}{3} N  p^{-1} + O\(p^{d/2 + o(1)}\) 
$$ 
Recalling the lower bound $N$ we see that the first term dominates, 
which finishes the proof.
\end{proof}


\section{Proofs of abundance  of  large   Weyl sums}

\subsection{Proof of Theorem~\ref{thm:Baire}}

The idea is that we first show that the  exponential sums  $S_d(\vx; N)$ are large at a dense subset of $\Tor$, and then we show the exponential sums are still large at  the small neighbourhoods of these  points.  This implies that the subset $\cE_{\alpha, d}$  has large topology  for each $0<\alpha<1$.

Let the sets $\L_{m, p}$ be  as in  Lemma~\ref{lem: large sum N}. 

For positive integers $k$  and  $m$ we consider the sets
$$
\cG_{m, k}=\bigcup_{\substack {p \geqslant k \\ p\text{ is prime} } } \L_{m, p},
$$
and define 
$$
\cG=\bigcap_{m=1}^{\infty} \bigcap_{k=1}^{\infty} \cG_{m,k}.
$$

Using  Lemma~\ref{lem: large sum N}, with $N = p \fl{c p^{m/d-1} (\log p)^{-1}}$, we conclude that  
 for each $0<\alpha<1$ we have  
\begin{equation}
\label{eq:G in E}
\cG\subseteq\cE_{\alpha, d}.
\end{equation}

Let $m, k\in \N$ and $\sfB\subseteq \Tor$ be an arbitrary open cubic box. Then Corollary~\ref{cor:dense}  implies that  there exists an open cubic box $\widetilde \sfB\subseteq \cG_{m, k}$ such that $\widetilde \sfB\subseteq \sfB$. It follows that  $\Tor\setminus \cG_{m, k}$ is a nowhere dense subset. Furthermore since 
$$
\Tor\setminus \cG= \bigcup_{m=1}^{\infty}\bigcup_{k=1}^{\infty} \left(\Tor\setminus \cG_{m, k}\right),
$$
we obtain that the set $\Tor\setminus \cG$
is the countable union of nowhere dense sets, and hence $\Tor\setminus \cG$ is of first category.
  Together with~\eqref{eq:G in E}  we  complete the proof.

\subsection{Proof of Theorem~\ref{thm:dimension}}
\subsubsection{Preamble} 
We first note that our methods for the cases $d=2$ and $d\geqslant 3$ are different.  For the case $d=2$ we use 
Lemma~\ref{lem:Gauss}. As it is shown in Remark~\ref{rem:why1},  in general   Lemma~\ref{lem:Gauss} does not hold for $d\geqslant 3$, for this case we use the results  from  Section~\ref{sec:weyl sum}. 

Throughout the proof we fix the cube $\fQ$.  In particular, all implied constants may depend on $\fQ$.

We use $\langle  \vz \rangle$ to denote the distance in the $L^\infty$-norm between $\vz \in \R^d$ 
and the closest point $\Z^d$. 

\subsubsection{Case~(i):   $d=2$.} \quad 
 For $\tau>2$ we define  
$$
\cW(\tau)=\{x \in \T:~\langle q x\rangle< q^{1-\tau} \text{ for infinitely many }  q\in \N\}.
$$
The classical   Jarn\'ik--Besicovitch theorem, see~\cite[Theorem~10.3]{Falconer} or~\cite{BDV}, asserts that 
$$
\dim \cW(\tau)=2/\tau.  
$$
We note that the method in the proof of~\cite[Theorem~10.3]{Falconer} (or see the proof of Lemma~\ref{lem:BJ}) imply that   
\begin{equation} 
\begin{split}
\label{eq:JB}
\dim \{x \in \T:~\langle p x\rangle &< p^{1-\tau} \text{ for infinitely primes }  p\}=2/\tau.
\end{split}
\end{equation} 
For our purpose we need obtain an analogy of~\eqref{eq:JB} for 
$\vx \in \fQ \subseteq \T_2$. 

We introduce some notation first. For a prime number $p$ we define
$$
\cA_{\tau, p}=\bigcup_{1\leqslant i, j\leqslant p-1} \left\{\vx\in \fQ:~\left\| \vx -(i/p, j/p) \right\|_{L^\infty} < p^{-\tau} \right\},
$$  
where $\left\| \vz \right\|_{L^\infty} $ is the $L^\infty$-norm in $\R^2$, 
and 
$$
\cG_{\tau}=\bigcap_{k=1}^{\infty}\bigcup_{\substack {p \geqslant k \\ p\text{ is prime} } } \cA_{\tau, p}. 
$$
Applying the arguments of~\cite[Theorem~10.3]{Falconer} to our setting $\cG_{\tau}$ we have the following. 

\begin{lemma}
\label{lem:BJ}
Using the above notation  for any $\tau>2$ we have $$\dim G_{\tau}=3/\tau.$$
\end{lemma}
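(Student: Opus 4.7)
The plan is to prove the matching bounds $\dim \cG_\tau \leq 3/\tau$ and $\dim \cG_\tau \geq 3/\tau$. For the upper bound I would apply the standard covering argument: for any $k \in \N$, one has $\cG_\tau \subseteq \bigcup_{p \geq k,\, p \text{ prime}} \cA_{\tau, p}$, and each $\cA_{\tau, p}$ is a union of at most $p^{2}$ cubes of side $2 p^{-\tau}$. For any $s > 3/\tau$, a short calculation gives
$$
\cH^{s}(\cG_\tau) \ll \sum_{p \geq k} p^{\,2-s\tau} \to 0 \qquad \text{as } k \to \infty,
$$
where the sum is over primes and convergence holds since $s\tau - 2 > 1$. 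This yields $\dim \cG_\tau \leq 3/\tau$.

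For the lower bound I would adapt the Cantor construction of~\cite[Theorem~10.3]{Falconer} to the two-dimensional prime-denominator setting. A crucial subtlety is that a one-prime-per-scale construction produces only the weaker bound $2/\tau$; reaching $3/\tau$ requires exploiting \emph{every} prime in a dyadic range simultaneously at each scale. Concretely, fix a large parameter $\eta > 0$, set $Q_{k+1} = \fl{Q_k^{\tau + \eta}}$ recursively, and let $\cP_k$ denote the primes in $[Q_k, 2Q_k]$, so that Chebyshev's estimates give $\# \cP_k \gg Q_k/\log Q_k$. At level $k$ place a cube of side $(2Q_k)^{-\tau}$ around each point $(i/p, j/p)$ with $p \in \cP_k$ and $1 \leq i, j \leq p-1$, retaining only those cubes that lie inside a level-$(k-1)$ cube. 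The resulting nested intersection $\FF$ is contained in $\cG_\tau$ by construction.

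A direct count using $\# \cP_k \gg Q_k/\log Q_k$ and $Q_k \asymp Q_{k-1}^{\tau + \eta}$ shows that inside each level-$(k-1)$ cube the number of level-$k$ centres is
$$
\sum_{p \in \cP_k}\bigl((2 Q_{k-1})^{-\tau} p\bigr)^{2} \gg \frac{Q_{k-1}^{\tau + 3\eta}}{\log Q_k}.
$$
Distributing equal mass $1/N_k$ to each of the $N_k$ level-$k$ cubes and taking the weak limit yields a probability measure $\mu$ on $\FF$. A mass-distribution argument analogous to that of Lemma~\ref{lem:dimension} then gives $\mu(B(\vx, r)) \ll r^{s}$ for every $s < (\tau + 3\eta)/(\tau(\tau+\eta-1))$, whence Lemma~\ref{lem:mass} delivers $\dim \FF \geq (\tau + 3\eta)/(\tau(\tau+\eta-1))$. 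Letting $\eta \to \infty$ produces $\dim \cG_\tau \geq 3/\tau$, matching the upper bound.

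The main obstacle is verifying the mass-distribution estimate uniformly in $r$. Two regimes must be treated separately: for $r \gg 1/Q_k$ the ball $B(\vx, r)$ may meet many level-$k$ cubes and one must count them against $N_k$, whereas for smaller $r$ the ball lies inside a single level-$k$ cube but can intersect many level-$(k+1)$ cubes coming from distinct primes in $\cP_{k+1}$. In each regime the worst-case inequality reduces to $N_k \gg Q_k^{\tau s}$, which is guaranteed by the choice of $\eta$ together with the lower bound on $\# \cP_k$. The hypothesis $\tau > 2$ enters at the separate step of ensuring disjointness of level-$k$ cubes, since distinct rational centres with denominators in $\cP_k$ are separated only by $\gg 1/Q_k^{2}$.
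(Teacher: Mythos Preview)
Your proposal is correct and follows essentially the same approach as the paper: the upper bound is the identical covering argument, and for the lower bound the paper likewise builds a Cantor set by using, at each level, \emph{all} primes in a dyadic window $[p_k,2p_k]$ (relying on $\tau>2$ for disjointness) and then applies the mass distribution principle. The only cosmetic difference is that the paper takes a super-fast growing sequence with $p_1\cdots p_k=p_{k+1}^{o(1)}$ directly, whereas you introduce a parameter $\eta$ with $Q_{k+1}\approx Q_k^{\tau+\eta}$ and let $\eta\to\infty$ at the end; both devices serve the same purpose of making the product of previous scales negligible against the current one.
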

\begin{proof}  
For the upper bound first note that for each $p$
the set $\cA_{\tau, p}$
can be covered by at most $p^{2}$ boxes with the side length $2p^{-\tau}$.  
Since for each $k \in \N$  
$$
\cG_{\tau}\subseteq \bigcup_{\substack {p \geqslant k \\ p\text{ is prime} } }\cA_{\tau, p},
$$ 
and  for any  $s>3/\tau$ we have
$$
\sum_{\substack {p \geqslant k \\ p\text{ is prime} } } p^{2 -\tau s} \ll k^{3-\tau s} \rightarrow 0 \text{ as } k\rightarrow \infty, 
$$
Definition~\ref{def:Hausdorff} implies $\dim \cG_{\tau}\leqslant s$. By the arbitrary choice of $s>3/\tau$ we conclude
\begin{equation}
\label{eq: G UP}
\dim \cG_{\tau}\leqslant 3/\tau.
\end{equation}

Now we turn to the lower bound. Let $p_k$ be a sequence rapidly increasing prime numbers such that 
\begin{equation}
\label{eq:increase}
p_1\ldots p_k = p_{k+1}^{o(1)}, \qquad \text{as}\ k \to \infty.
\end{equation}
For each $k$ define 
$$
\cH_{k}=\bigcup_{
\substack {p_k\leqslant p\leqslant 2p_k \\ p\text{ is prime} }} \cA_{\tau, p}.
$$
An important fact  is that for different primes $p_k\leqslant p, r\leqslant 2p_k 
$ the sets $\cA_{\tau, p}$ and $\cA_{\tau, r}$ are  disjoint when $p_k$ is large enough. 
Indeed, this follows from   the choice of $\tau>2$ and that for $1\leqslant a, b\leqslant p$ and  $1\leqslant c,d\leqslant r-1 $,  
$$
\left\| (a/p, b/p) -(c/r, d/p) \right\|_{L^\infty}  \gg p_k^{-2}.
$$
Note that there are $p_k^{1+o(1)}$ prime numbers between $p_k$ and $2p_k$, and for each prime number $p_k\leqslant p\leqslant 2p_k$ the set $\cA_{\tau, p}$ contains  $p_k^{2+o(1)}$ boxes with the side length $p_k^{-\tau}$, which due to the fact that  the cube $\fQ$ is fixed.  Thus  the set $\cH_k$ consists of   $p_k^{3+o(1)}$ boxes with the side length $p_k^{-\tau}$. We remark that  the implied constant may depend on $\fQ$, however it is not hard to see that for a fixed   cube $\fQ$ this constant does not affect the result. 
Let 
$$
\cH=\bigcap_{k=1}^{\infty} \cH_{k}.
$$
We claim that 
\begin{equation}
\label{eq:claim}
\dim \cH \geqslant 3/\tau.
\end{equation}
We show some explanation in the following. For each $k\in \N$ let
$$
\FF_k= \bigcap_{i=1}^{k} \cH_{i}. 
$$
Note that $\cH=\bigcap_{k=1}^{\infty}\FF_k$. An important fact is that for any box of $\cH_i$ with the side length $p_i^{-\tau}$ it contains 
$$
q_{i+1}=\left( \frac{p_i^{-\tau}}{p_{i+1}^{-1}}\right)^{3}
$$
uniformly distributed boxes of $\cH_{i+1}$ with the side length $p_{i+1}^{-\tau}$.  Denote $q_1=p_1^{3}$.
It follows, also using~\eqref{eq:increase}, that  $\FF_k$  contains at least  
$$
 \prod_{i=1}^{k}q_i=p_k^{3+o(1)}
$$
boxes with the side length $p_{k}^{-\tau}$.  

By giving a measure on $\cH$ in a similar way as in the proof of Lemma~\ref{lem:dimension}, and then applying the mass distribution principle,  see  Lemma~\ref{lem:mass}, we obtain
$$
\dim \cH \geqslant \liminf_{k\rightarrow \infty} \frac{\log \prod_{i=1}^{k}q_i}{\log p_k^{\tau}}=3/\tau,
$$ 
which proves the claim~\eqref{eq:claim}.

Observe that for each $\vx\in \cH$ there are infinitely $p$ such that $\vx \in \cA_{\tau, p}$,
and hence $\vx\in \cG_{\tau}$ and $\cH\subseteq \cG_{\tau}$.  By the monotonicity  property of  the Hausdorff dimension we 
see from~\eqref{eq:claim} that  
$$
\dim \cG_{\tau}\geqslant \tau/3,
$$
which together with~\eqref{eq: G UP} finishes the proof.
\end{proof}

To conclude the proof  for the case $d=2$, 
it is sufficient to prove $\cG_\tau\subseteq \cE_{\alpha, 2}\(\fQ\)$   with some  $\tau$, since   
\begin{equation}\label{eq:h}
\dim \cE_{\alpha, 2} \(\fQ\) \geqslant \dim \cG_{\tau}\geqslant 3/\tau.
\end{equation}

Let $\vx=(x_1, x_2) \in \cA_{\tau, p}$ then there exists $(a, b)$ with $1\leqslant a,  b\leqslant p-1$ such that
$$
\left\|(x_1, x_2)-(a/p, b/p)\right\|_{L^\infty} <p^{-\tau}.
$$
Applying Lemma~\ref{lem:Gauss}, exactly as in the proof of Lemma~\ref{lem: large sum N} we see that 
$$
\sum_{n=1}^{N} \e(x_1n+x_2n^{2})\gg \frac{N}{\sqrt{p}}, 
$$
provided  
\begin{equation}\label{eq:c1}
p\leqslant N \leqslant c p^{\tau/2} (\log p)^{-1}\mand p \mid N
\end{equation}
for some absolute constant $c> 0$.  

 Furthermore, for any small $\varepsilon>0$, if we have 
\begin{equation}\label{eq:c2}
N/\sqrt{p}\geqslant N^{\alpha+\varepsilon},
\end{equation}
then we  also have
$$
|S_2(\vx, N)|\gg N^{\alpha+\varepsilon}.
$$
Note that the implied constant here does not depend on $\varepsilon$.
Clearly we can find $N$ satisfying~\eqref{eq:c1} and~\eqref{eq:c2} simultaneously provided that 
\begin{equation}
\label{eq:tau d2}
\tau>\max\{2, 1/(1-\alpha-\varepsilon)\}
\end{equation} 
and $p$ is large enough.  It follows that for each $\vx\in A_{\tau, p}$  with large enough $p$ there exists $N=N_p$ such that 
$$
|S_2(\vx; N)| \gg N^{\alpha+\varepsilon}\geqslant N^{\alpha}.
$$
This implies that  $\cG_{\tau}\subseteq \cE_{\alpha, 2}\(\fQ\)$.  Combining with~\eqref{eq:h} and~\eqref{eq:tau d2} we obtain that 
$$
\dim \cE_{\alpha, 2} \(\fQ\) \geqslant \min\left\{3/2, 3(1-\alpha-\varepsilon) \right\}.
$$
By the arbitrary choice of small and positive $\varepsilon$, we finish the proof.

\subsubsection{Case~(ii):   $d\ge3$.} \quad 
We note that our method also works for $d = 2$, thus we only assume $d\ge2$ in the following.

Let $p_k$ be a sequence rapidly increase prime numbers  such that
\begin{equation}
\label{eq:increse2}
p_1\ldots p_k= p_{k+1}^{o(1)}, \quad \text{ as } k\rightarrow \infty.
\end{equation}
Let $\tau>0$ such that  
\begin{equation} 
\label{eq:tau}
\tau>d.
\end{equation} 

As before, we define $\delta_0$ as the side length of $\fQ$, that is, as in~\eqref{eq:delta0}.
 For each $k\in \N$ let 
\begin{equation}
\label{eq:setting1}
 \delta_k=p_{k}^{-\tau}, 
\end{equation}
and choosing  $p_k$  large enough, we see that  we can  assume that $\delta_k < \delta_0$. 

Fix some sufficiently small $\varepsilon>0$ and for each $k\geqslant 0$  let
\begin{equation}
\label{eq:eps}
p_{k+1}^{-\kappa_d+\varepsilon} \leqslant \ell_{k+1}\leqslant 2p_{k+1}^{-\kappa_d+\varepsilon}
\end{equation} 
where $\kappa_d$ is given by~\eqref{eq:kappa}, 
such that $\delta_k/ \ell_{k+1} \in \Z $. For example, the choice   
$$  
\ell_{k+1} = \delta_k /\fl{p_{k+1}^{\kappa_d-\varepsilon}\delta_k}
$$
is satisfactory since we may choose $p_k$ such that   $p_{k+1}^{\kappa_d-\varepsilon} \delta_k \geqslant 1$ for any small  $\varepsilon >0$.

Denote 
\begin{equation}
\label{eq:setting}
 q_{k+1}=\left(\frac{\delta_k}{\ell_{k+1}}\right)^{d}.
\end{equation}
Applying Lemma~\ref{lem:dimension} to the sequences $\delta_k, \ell_k$ we obtain the following.

\begin{lemma}
\label{lem:dim F} 
In the above notation~\eqref{eq:setting1} and~\eqref{eq:setting} and 
under the conditions~\eqref{eq:increse2}, \eqref{eq:tau} and~\eqref{eq:eps},  for any 
$\FF\in \Omega( \fQ; \bdelta, \bell)$, we have  
$$
\dim \FF = \frac{d \kappa_d}{\tau}-d \varepsilon/\tau.
$$
\end{lemma}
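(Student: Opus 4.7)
The plan is to apply Lemma~\ref{lem:dimension} directly, reducing the problem to evaluating the liminf
$$
\liminf_{k\to\infty} \frac{\log \prod_{i=1}^{k} q_i}{-\log \delta_k}
$$
under the specific choices~\eqref{eq:setting1}--\eqref{eq:setting}. First I would verify that the hypotheses of Lemma~\ref{lem:dimension} hold for the data $(\bdelta,\bell)$: monotonicity of $\delta_k$ and $\ell_k$, the containment $\delta_{k+1} < \ell_{k+1} < \delta_k$, and the divisibility $\delta_k/\ell_{k+1} \in \Z$ are all arranged for large $k$ by the rapid growth of the primes $p_k$ in~\eqref{eq:increse2}, the condition $\tau > d + 1/2 > \kappa_d$ from~\eqref{eq:tau}, and the explicit rounding prescribed in the statement (so e.g.\ $\ell_{k+1} = \delta_k / \lfloor p_{k+1}^{\kappa_d - \varepsilon} \delta_k \rfloor$ verifies all the requirements).

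Once Lemma~\ref{lem:dimension} applies, I would unwind the definitions. From~\eqref{eq:setting1} we have $-\log \delta_k = \tau \log p_k$, and from~\eqref{eq:setting} combined with~\eqref{eq:eps} we get
$$
\log q_{i+1} = d(\log \delta_i - \log \ell_{i+1}) = d\bigl[-\tau \log p_i + (\kappa_d - \varepsilon)\log p_{i+1}\bigr] + O(1),
$$
with the convention $\log p_0 = 0 = \log \delta_0$. Summing and reindexing yields
$$
\log \prod_{i=1}^{k} q_i = d(\kappa_d - \varepsilon)\log p_k - d\tau \sum_{i=1}^{k-1} \log p_i + O(k).
$$
The rapid-growth condition~\eqref{eq:increse2} is precisely the assertion that $\sum_{i=1}^{k-1} \log p_i = o(\log p_k)$, which also swallows the $O(k)$ error. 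Hence $\log \prod_{i=1}^{k} q_i = d(\kappa_d - \varepsilon) \log p_k + o(\log p_k)$, and dividing by $\tau \log p_k$ gives the limit $d\kappa_d/\tau - d\varepsilon/\tau$, which is the claimed value.

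There is no serious obstacle here; the only point requiring care is making sure that the rapid-growth assumption~\eqref{eq:increse2} absorbs both the telescoping cross term $\sum \log p_i$ and the bounded error coming from the range~\eqref{eq:eps} for $\ell_{k+1}$. Since~\eqref{eq:increse2} forces $\log p_k$ to dominate all previous logarithms even when multiplied by constants depending on $d$ and $\tau$, the computation collapses cleanly to a single-term estimate and the liminf coincides with the limit.
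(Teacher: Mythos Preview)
Your proposal is correct and follows essentially the same route as the paper: both apply Lemma~\ref{lem:dimension} and then evaluate $\log\prod_{i=1}^k q_i$ using the rapid-growth condition~\eqref{eq:increse2} to kill the lower-order contribution from $p_1,\ldots,p_{k-1}$. One small arithmetic slip: after summing, the coefficient of $\sum_{i=1}^{k-1}\log p_i$ should be $d(\kappa_d-\varepsilon-\tau)$ rather than $-d\tau$, but since this sum is $o(\log p_k)$ anyway the conclusion is unaffected.
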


\begin{proof}
Recalling~\eqref{eq:increse2} and~\eqref{eq:eps}, we obtain
$$
q_1\ldots q_k= \frac{(p_1\ldots p_k)^{d\kappa_d-d\varepsilon +o(1)}}{(p_1\ldots p_{k-1})^{\tau d}} = p_k^{d\kappa_d-d\varepsilon +o(1)}
$$
and 
$$
\frac{\log q_1\ldots q_k}{\log p_k ^{\tau}}=\frac{d \kappa_d}{\tau} -d\varepsilon/\tau+o(1).
$$
Lemma~\ref{lem:dimension} gives 
$$
\dim \FF =\liminf_{k\rightarrow \infty}\frac{\log q_1\ldots q_k}{\log p_k ^{\tau}}=\frac{d \kappa_d}{\tau} -d\varepsilon/\tau,
$$
which finishes the proof.
\end{proof}

We are now going to show that there exists a pattern $\FF\in \Omega(\fQ; \bdelta, \bell)$ such that $\FF\subseteq \cE_{\alpha, d}\(\fQ\)$ for some $\tau$ which may  depend on $\alpha$ and $d$. Thus Lemma~\ref{lem:dim F} implies that  
\begin{equation}\label{eq:sufficient}
\dim \cE_{\alpha, d}\(\fQ\) \geqslant \dim \FF = \frac{d \kappa_d}{\tau}-d \varepsilon/\tau.
\end{equation}

Our construction is inductive. 

For $\delta_0$ given by~\eqref{eq:delta0} and $\ell_1$
with 
$$
p_1^{-\kappa_d+\varepsilon}\leqslant \ell_1 \leqslant 2p_1^{-\kappa_d+\varepsilon}
$$ 
(note that we  request $ \delta_0/ \ell_1 \in \Z$), by Lemma~\ref{lem:manyboxes}  there exists a $(\delta_0, \ell_1, p_1^{-\tau})$-pattern, which we denote by  $\FF_1$, such that for  
$$ 
N= p_1 \fl{c p_1^{\tau/d-1} (\log p_1)^{-1}}
$$  
and all $\vx \in \FF_1$ we have 
$$
|S_d(\vx; N)| \gg  N^{1-d/2\tau}(\log N)^{-d/2\tau}.
$$

Now, suppose  that we have a pattern $\FF_k$ which is a collection of $q_1\ldots q_k$ boxes with the side length $\delta_k$. For each box $\sfB$ again by  Lemma~\ref{lem:manyboxes} there exists a $(\delta_k, \ell_{k+1}, \delta_{k+1})$-pattern $ \Upsilon_{\sfB} \subseteq \sfB$ such that for  $$
N=p_{k+1}\fl{c p_{k+1}^{\tau/d-1} (\log p_{k+1})^{-1}}
$$ 
and all $\vx \in \Upsilon_{\sfB}$ we have 
\begin{equation}
\label{eq:large}
|S_d(\vx; N)| \gg  N^{1-d/2\tau}(\log N)^{-d/2\tau}.
\end{equation}
Let 
$$
\FF_{k+1}=\{\Upsilon_{\sfB}:~{\sfB}\in \FF_k\}.
$$
For convenience  we use the same notation to denote 
$$
\FF_{k+1}=\bigcup_{\sf B\in \FF_k} \Upsilon_{\sfB}.
$$ 
Let 
$$
\FF=\bigcap_{k=1}^{\infty}\FF_k. 
$$
Then by~\eqref{eq:large} we conclude that 
$$
\FF\subseteq \cE_{\alpha, d} \(\fQ\)
$$ 
provided that 
\begin{equation}
\label{eq:alpha}
1-d/2\tau>\alpha,
\end{equation}
and the condition~\eqref{eq:tau} holds.

The inequalities~\eqref{eq:tau} and~\eqref{eq:alpha} imply that it is sufficient to take  any $\tau$ such that 
$$
\tau>\max\left\{ d, \frac{d}{2(1-\alpha)} \right\}. 
$$ 
Combining this with~\eqref{eq:sufficient}, and using that $d \varepsilon/\tau \leqslant  \varepsilon$ we obtain 
$$
\dim \cE_{\alpha, d}\(\fQ\) \geqslant  \min\left\{ \kappa_d,  2 \kappa_d (1-\alpha) \right\}
-  \varepsilon.
$$  
Since this lower bound holds for any $\varepsilon>0$, we conclude the proof
of Theorem~\ref{thm:dimension}.

\section{Proofs of abundance  of  large monomial sums}

\subsection{Proof of Theorem~\ref{thm:Baire-mon}}

For $d, p \in \N$ and some  $\tau>0$ we define the sets
\begin{equation}
\label{eq:A}
\cA_{d, p, \tau}=\bigcup_{\substack {1\leqslant a <p^{d} \\ \gcd(a, p)=1} } \left \{ x\in \T :~\left | x- a/p^{d}\right | <p^{-\tau} \right \},
\end{equation} 
and 
\begin{equation}
\label{eq:B}
\cB_{d, \tau}=\bigcap_{k=1}^{\infty}\bigcup_{\substack {p \geqslant k \\ p\text{ is prime} } } \cA_{d, p, \tau}. 
\end{equation}

Let  $x\in \cA_{d, p, \tau}$.  Applying  Lemma~\ref{lem:continuous-mon} we see that
$$
\left |\sum_{n=1}^{N} \e(xn^{d}) \right |\geqslant 0.5 Np^{-1}, 
$$
provided that 
\begin{equation}\label{eq:ccc1}
c p^{\tau/d}\geqslant  N \geqslant  p^{d/2+1 + \varepsilon}  
\end{equation}
for some $\varepsilon >0$ and sufficiently large $p$, where $c>0$ is an absolute constant.

Furthermore,  for each $0<\alpha<1$ if we have 
\begin{equation}
\label{eq:ccc2}
0.5 Np^{-1}\geqslant N^{\alpha},
\end{equation}
then we also have 
$$
\left |\sigma_d(x; N) \right |\geqslant N^{\alpha}.
$$
By conditions~\eqref{eq:ccc1} and~\eqref{eq:ccc2} we conclude that for any $\tau>0$ such that  
\begin{equation}
\label{eq:tauvalue}
\tau>\max\{d^{2}/2+d,  d/(1-\alpha)\},
\end{equation}
there exists $N$ such that the conditions~\eqref{eq:ccc1} and~\eqref{eq:ccc2} hold simultaneously. 

 It follows that 
there exists some $N_{d, p, \tau}$ such that for any $x\in \cA_{d, p, \tau}$  
$$
\left| \sigma_d(x; N_{d, p, \tau})\right| \geqslant N_{d, p, \tau}^{\alpha}.
$$
Therefore if~\eqref{eq:tauvalue} holds then 
\begin{equation}
\label{eq:subset}
\cB_{d, \tau}\subseteq\sE_{ \alpha, d}.
\end{equation}

For each $k\in \N$ let
$$
\cG(d, \tau, k)=\bigcup_{\substack {p \geqslant k \\ p\text{ is prime} } } \cA_{d, p, \tau}.
$$
Clearly for each $d, \tau, k$ the set $\cG(d, \tau, k)$ is an open and dense subset of $[0 ,1)$, and hence  $[0, 1)\setminus \cG(d, \tau, k)$ is a nowhere dense subset of $[0 ,1)$. Therefore we obtain that the set 
$$
\bigcup_{k=1}^{\infty} \, [0, 1)\setminus \cG(d, \tau, k)
$$
is of first Baire category set. Now from~\eqref{eq:B} and~\eqref{eq:subset} we obtain 
$$
[0, 1)\setminus \sE_{ \alpha, d} \subseteq  [0, 1) \setminus \cB_{d, \tau} =\bigcup_{k=1}^{\infty} \, [0, 1)\setminus \cG(d, \tau, k),
$$
and hence we finish the proof.

\subsection{Proof of Theorem~\ref{thm:dimension-mon}}  

\subsubsection{Preamble} 
We note that for the monomials the methods for the cases $d=2$ and $d\geqslant 3$ are also different. For the case $d=2$ we use  Lemma~\ref{lem:Gauss}, while for the case $d\geqslant 3$ we use Lemma~\ref{lem:largevalue-mon}.

Throughout the proof we fix the interval $\fI\subseteq \T$.
 In particular, all implied constants may depend on $\fI$.

\subsubsection{Case~(i):   $d=2$.} \quad This case follows by applying the similar arguments to the proof of Theorem \ref{thm:dimension} for the case $d=2$.

For $ p \in \N$ and some  $\tau>0$ let 
$$
\cA_{p, \tau}=\bigcup_{1\leqslant a <p } \left \{ x\in \fI :~\left | x- a/p\right | <p^{-\tau} \right \},
$$ 
and 
$$
\cB_{\tau}=\bigcap_{k=1}^{\infty}\bigcup_{\substack {p \geqslant k \\ p\text{ is prime} } } \cA_{p, \tau}. 
$$
As we claimed before that the method in the proof of~\cite[Theorem~10.3]{Falconer} (or see the proof of Lemma~\ref{lem:BJ}) imply that   
\begin{equation} 
\label{eq:JB-mon}
\dim \cB_{\tau}=2/\tau.
\end{equation}

Applying Lemma \ref{lem:Gauss} and Lemma \ref{lem: large sum N} we conclude that 
for any $x\in \cA_{p, \tau}$ there exists $N_{p, \tau}$ such that 
$$
\sigma_2(x; N_{p, \tau}) \gg N^{\alpha}
$$
provided that   
$$
\tau>\max\{2, 1/(1-\alpha)\}.
$$ 
Note that this is the same condition as~\eqref{eq:tau d2} up to the   small parameter $\varepsilon$. Under this condition for the parameter $\tau$ we conclude 
$\cB_\tau\subseteq \sE_{\alpha, 2}(\fI)$. Combining with~\eqref{eq:JB-mon} we obtain the desired result.

\subsubsection{Case~(ii):   $d\ge3$.} \quad 
We slightly modify  the definition of the set $\cA_{d, p, \tau}$ in~\eqref{eq:A}
by using $\fI$ instead of $\T$, that is, we now set 
$$
\cA_{d, p, \tau}=\bigcup_{\substack {1\leqslant a <p^{d} \\ \gcd(a, p)=1} } \left \{ x\in \fI :~\left | x- a/p^{d}\right | <p^{-\tau} \right \},
$$
while  the set  $\cB_{d, \tau}$ is still defined  by~\eqref{eq:B}. 

By adapting the arguments  of~\cite[Theorem~10.3]{Falconer} and Lemma~\ref{lem:BJ} to the sets $\cB_{d, \tau}$ we have the following.

\begin{lemma}
\label{lem:dimension-mon}
Using the above notation for any $\tau >2d$ we have $$
\dim \cB_{d, \tau}=(d+1)/\tau.
$$
\end{lemma}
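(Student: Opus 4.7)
The plan is to handle the upper and lower bounds for $\dim \cB_{d,\tau}$ separately, mirroring the argument used for Lemma~\ref{lem:BJ} but with rationals $a/p^{d}$ in place of $i/p$, so that the ambient counts and the disjointness threshold both shift in a controlled way.

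For the upper bound, I would observe that $\cA_{d,p,\tau}$ is a union of at most $p^{d}$ intervals each of length $2p^{-\tau}$. Since $\cB_{d,\tau} \subseteq \bigcup_{p \geqslant k} \cA_{d,p,\tau}$ for every $k \in \N$, for any $s > 0$ one has
$$
\sum_{\substack{p \geqslant k\\ p\text{ prime}}} p^{d}\bigl(2p^{-\tau}\bigr)^{s} \ll \sum_{\substack{p \geqslant k\\ p\text{ prime}}} p^{d-\tau s} \ll k^{d+1-\tau s},
$$
which tends to $0$ as $k \to \infty$ whenever $s > (d+1)/\tau$. Definition~\ref{def:Hausdorff} then gives $\dim \cB_{d,\tau} \leqslant (d+1)/\tau$.

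For the lower bound, I would construct a Cantor-type subset $\cH \subseteq \cB_{d,\tau}$ in analogy with the proof of Lemma~\ref{lem:BJ}. Pick a rapidly growing sequence of primes $(p_k)$ satisfying $p_{1}\cdots p_{k} = p_{k+1}^{o(1)}$ and set
$$
\cH_{k} = \bigcup_{\substack{p_{k} \leqslant p \leqslant 2p_{k}\\ p\text{ prime}}} \cA_{d,p,\tau},
\qquad \cH = \bigcap_{k=1}^{\infty}\cH_{k}.
$$
The key point is disjointness: for distinct primes $p,r \in [p_{k},2p_{k}]$ and admissible $a,b$, the identity $a/p^{d} - b/r^{d} = (ar^{d}-bp^{d})/(pr)^{d}$ has nonzero integer numerator (since $\gcd(a,p) = 1$ and $\gcd(r,p) = 1$ force $p^{d}\nmid ar^{d}$), so
$$
\left| \frac{a}{p^{d}} - \frac{b}{r^{d}} \right| \geqslant \frac{1}{(pr)^{d}} \gg p_{k}^{-2d}.
$$
The hypothesis $\tau > 2d$ makes $p^{-\tau} = o(p_{k}^{-2d})$, so for $p_{k}$ large the intervals composing $\cA_{d,p,\tau}$ for different primes $p \in [p_{k},2p_{k}]$ are pairwise disjoint. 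By the prime number theorem there are $p_{k}^{1+o(1)}$ such primes, each contributing $\varphi(p^{d}) = p_{k}^{d+o(1)}$ intervals of length $\asymp p_{k}^{-\tau}$, so $\cH_{k}$ is a disjoint union of $p_{k}^{d+1+o(1)}$ intervals.

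Distributing mass uniformly through the nested construction, defining $\nu$ so that each of the $\prod_{j=1}^{k} p_{j}^{d+1+o(1)}$ surviving intervals at level $k$ carries equal mass, and invoking Lemma~\ref{lem:mass} (together with the growth condition on $(p_{k})$ to swallow contributions from earlier levels), one obtains
$$
\dim \cH \geqslant \liminf_{k \to \infty} \frac{\log \prod_{j=1}^{k} p_{j}^{d+1+o(1)}}{\log p_{k}^{\tau}} = \frac{d+1}{\tau}.
$$
Since $\cH \subseteq \cB_{d,\tau}$, this gives the required lower bound. The main obstacle is the disjointness step: this is where the condition $\tau > 2d$ is essential, and the appearance of $p^{d}$ rather than $p$ in the denominators is precisely why one needs $\tau > 2d$ here instead of $\tau > 2$ as in Lemma~\ref{lem:BJ}. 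Once disjointness is verified, the remainder is a routine adaptation of the Cantor-set scheme already established.
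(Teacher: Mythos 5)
Your proposal is correct and follows essentially the same route as the paper: the upper bound via the obvious cover by $\cA_{d,p,\tau}$ and the convergence of $\sum_{p \geqslant k} p^{d-\tau s}$, and the lower bound via a Cantor-type intersection of the $\cA_{d,p,\tau}$ over dyadic blocks of primes, with disjointness secured by $\tau > 2d$ and the mass distribution principle yielding $(d+1)/\tau$. The only genuine addition you make is spelling out why $ar^{d} - bp^{d} \neq 0$ (the paper states $|aq^d - bp^d| \geqslant 1$ without comment), which is a welcome clarification; otherwise this is the paper's argument.
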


\begin{proof} 
Let $s>(d+1)/\tau$. Note that for any $k\in \N$ we have 
$$
\cB_{d, \tau}\subseteq \bigcup_{\substack {p \geqslant k \\ p\text{ is prime} } } \cA_{d, p, \tau}.
$$
Since  
$$
\sum_{p\geqslant k} p^{d} p^{-\tau s}\rightarrow 0\quad  \text{ as } k\rightarrow \infty,
$$
Definition~\ref{def:Hausdorff} implies  $\dim \cB_{d, \tau}\leqslant s$. By the arbitrary choice of $s>(d+1)/\tau$ we conclude that 
\begin{equation} 
\label{eq:dimupper}
\dim \cB_{d, \tau}\leqslant (d+1)/\tau.
\end{equation}

Now we turn to the lower bound of $\dim\cG_{d, \tau}$.  Let $p_k$ be a sequence rapidly increasing prime numbers 
satsifying~\eqref{eq:increase}. 
For each $i$ let 
$$
\cF_k= \bigcup_{\substack { p_k\leqslant p\leqslant 2p_k \\ p\text{ is prime} } } \cA_{d, p, \tau},
$$
and 
$$
\cF=\bigcap_{k=1}^{\infty} \cF_k. 
$$
Clearly we have 
\begin{equation}
\label{eq:FinB}
\cF\subseteq\cB_{d, \tau}
\end{equation}
Hence, it is sufficient to show that 
\begin{equation}
\label{eq:F large}
\dim \cF\geqslant (d+1)/\tau.
\end{equation}

Let $p, q$ be two distinct prime numbers with $p_k\leqslant p, q \leqslant 2p_k$, and  let  $1\leqslant a<p^{d}$ and $ 1\leqslant b< q^{d}$ such that $\gcd(a, p)=\gcd(b, q)=1$. Then  
$$|aq^{d}-bp^{d}|\geqslant 1,$$
and 
$$
\left |\frac{a}{p^{d}}-\frac{b}{q^{d}} \right |\gg \frac{1}{p_k^{2d}}.
$$

Since $\tau>2d$, we conclude that the sets $\cA_{d, p, \tau}$ and $\cA_{d, q, \tau}$ are disjoint for two distinct  prime numbers $p_k\leqslant p, q\leqslant 2p_k$ when $p_k$ is large enough.

Note that there are $p_k^{1+o(1)}$ prime numbers between $p_k$ and $2p_k$, and for each prime number $p_k\leqslant p\leqslant 2p_k$ the set $\cA_{d, p, \tau}$ contains   $p^{d+o(1)}$ intervals  with  length $2p^{-\tau}$ (since the interval $\fI$ is fixed). Thus  the set $\cF_k$ consists of   $p_k^{d+1+o(1)}$ intervals with length nearly $p_k^{-\tau}$.   
As in the proof of Theorem~\ref{thm:dimension},  we remark that  the implied constant may depend on $\fI$, however it is not hard to see that for a fixed   interval  $\fI$ this constant does not affect the result.

By~\eqref{eq:increase}, each interval of $\cF_k$ consists  nearly $p_{k+1}^{d+1+o(1)}$ intervals of $\cF_{k+1}$ of length $p_{k+1}^{-\tau}$.

Applying the method in~\cite[Example~4.7]{Falconer}, see also Lemma~\ref{lem:BJ}, we obtain the 
inequality~\eqref{eq:F large} 
which together with~\eqref{eq:dimupper} and~\eqref{eq:FinB} concludes  the proof.
\end{proof}

 For each $0<\alpha<1$ we intend to find some $\tau>2d$ such that  
$$
\cB_{d, \tau}\subseteq \sE_{ \alpha, d}(\fI).
$$
Hence, by the monotonicity  property of  the Hausdorff dimension and Lemma~\ref{lem:dimension-mon} we obtain 
\begin{equation}
\label{eq:lowerbound}
\dim\sE_{ \alpha, d}(\fI)\geqslant \dim \cB_{d, \tau } =(d+1)/\tau.
\end{equation}

Applying the arguments in the proof of Theorem~\ref{thm:Baire-mon}, see~\eqref{eq:tauvalue},  we obtain that 
for any 
$$
\tau>\max\{d^{2}/2+d,  d/(1-\alpha)\}>2d,
$$ 
and any $ \cA_{d, p, \tau}$ there exists some $N_{d, p, \tau}$ such that for any $x\in \cA_{d, p, \tau}$  
$$
\left| \sigma_d(x; N_{d, p, \tau})\right| \geqslant N_{d, p, \tau}^{\alpha}.
$$
Thus the condition of Lemma~\ref{lem:dimension-mon} is satisfied. Combining with 
~\eqref{eq:lowerbound},  we obtain 
$$
\dim \sE_{ \alpha, d}(\fI)\geqslant  (1+1/d) \min\left \{2/(d +2),  1-\alpha \right \}
$$
which finishes the proof.

\section{Proofs of abundance of poorly distributed polynomials}

\subsection{Exponential sums and the discrepancy} 

For our applications we need the following  {\it Koksma-Hlawlka inequality\/}, see~\cite[Theorem~1.14]{DrTi} for a general statement. 
\begin{lemma}
\label{lem:KH}
Using the above notation, for any $\vx\in \Tor$
$$
S_d(\vx; N)\ll D_d(\vx; N).
$$
\end{lemma}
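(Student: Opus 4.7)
The plan is a direct appeal to the one-dimensional Koksma inequality, i.e.\ the version of Koksma--Hlawka for functions of bounded variation on $[0,1]$ (cf.~\cite[Theorem~1.14]{DrTi}). Writing $y_n=\{x_1 n+\ldots+x_d n^d\}$ for $n=1,\ldots,N$, one has $S_d(\vx;N)=\sum_{n=1}^N \e(y_n)$, and by definition $D_d(\vx;N)$ is the discrepancy of the sequence $(y_n)_{n=1}^N$ in $[0,1)$. So the task is the purely one-dimensional one: for any finite sequence $(y_n)$ in $[0,1)$, control the oscillating sum $\sum_{n=1}^N\e(y_n)$ by the discrepancy of $(y_n)$.

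To do this I would split the complex exponential into its real and imaginary parts, $\e(y)=\cos(2\pi y)+i\sin(2\pi y)$. Each of $\cos(2\pi y)$ and $\sin(2\pi y)$ is of bounded variation on $[0,1]$ with total variation $O(1)$, and both have integral zero. Applying Koksma's inequality (with variation as the leading factor) to each component separately yields
$$
\left|\sum_{n=1}^N\cos(2\pi y_n)\right|\ll D_N^{*},\qquad
\left|\sum_{n=1}^N\sin(2\pi y_n)\right|\ll D_N^{*},
$$
where $D_N^{*}$ denotes the star discrepancy of $(y_n)$. The triangle inequality then gives $|S_d(\vx;N)|\ll D_N^{*}$.

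Finally, one invokes the standard comparison $D_N^{*}\le D_N\le 2D_N^{*}$ between the star discrepancy and the extreme (two-sided) discrepancy of Definition~\ref{def:Discr}, which immediately converts the bound into $|S_d(\vx;N)|\ll D_d(\vx;N)$, as required. There is no genuine obstacle here: the result is a textbook consequence of Koksma's inequality, and the only bookkeeping step is matching the discrepancy convention used in Definition~\ref{def:Discr} with the star discrepancy that appears naturally in the Koksma--Hlawka framework.
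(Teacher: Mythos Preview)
Your proposal is correct and is exactly the approach the paper takes: the paper does not give a proof at all but simply cites the Koksma--Hlawka inequality \cite[Theorem~1.14]{DrTi}, of which your argument is a straightforward unpacking in the one-dimensional case. The only thing you add beyond the citation is the (routine) bookkeeping of splitting $\e(y)$ into real and imaginary parts and matching the star discrepancy with the discrepancy of Definition~\ref{def:Discr}, both of which are standard.
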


Note that in particular,  Lemma~\ref{lem:KH} 
implies $\sigma_d(x; N)\ll \Delta_d(x; N)$ for $x \in [0,1)$. 

\subsection{Proof of Theorems~\ref{thm:BaireD} and~\ref{thm:BaireD-mon}}

We see that Lemma~\ref{lem:KH} implies that for any cube $\fQ\subseteq \T_d$ or interval $\fI \subseteq \T$  and any
   $\eps>0$  one has 
\begin{equation} 
\label{eq:subset-cube}
\cE_{\alpha+\eps, d}(\fQ)\subseteq \cD_{\alpha, d}(\fQ) \mand 
\sE_{\alpha+\eps, d}(\fI)\subseteq \sD_{\alpha, d}(\fI). 
\end{equation}
Combining~\eqref{eq:subset-cube} for $\fQ=\T_d$ and $\fI =\T$ with  Theorems~\ref{thm:Baire}  and~\ref{thm:Baire-mon} we obtain 
Theorems~\ref{thm:BaireD} and~\ref{thm:BaireD-mon}, respectively. 

\subsection{Proof of Theorems~\ref{thm:dimD} and~\ref{thm:dimD-mon}}   
Applying~\eqref{eq:subset-cube} and the monotonicity property of Hausdorff dimension we have
$$
\dim \cD_{\alpha, d}(\fQ) \ge \sup_{\eps>0 } \dim \cE_{\alpha+\epsilon, d}(\fQ)
$$
and
$$
\dim \sD_{\alpha, d}(\fI) \ge \sup_{\eps>0 } \dim \sE_{\alpha+\epsilon, d}(\fI).
$$
Combining this with Theorems~\ref{thm:dimension} and~\ref{thm:dimension-mon} 
 we obtain Theorems~\ref{thm:dimD} and~\ref{thm:dimD-mon}, respectively.

\section{Further results, open problems and conjectures}

\subsection{Further extensions of  Theorems~\ref{thm:Baire} and~\ref{thm:dimension}}

On the other hand, the method of proof of Lemma~\ref{lem:box dense} is quite robust and can be implies to some other families of 
polynomials, such as sparse polynomials   
$$
a_1 X^{m_1}+ \ldots + a_d X^{m_d} \in \F_p[X]. 
$$
In turn, this can be used to obtain 
 versions of Theorems~\ref{thm:Baire}  and~\ref{thm:dimension}  for exponential sum with sparse polynomials
$$
S_{\vm}(\vx; N)=\sum_{n=1}^{N}\e(x_1n^{m_1}+\ldots+x_d n^{m_d}), 
$$
where $\vm=(m_1, \ldots, m_d)\in \Z^d$  with $1\leqslant m_1<m_2<\ldots<m_d$.  More precisely,  for each $0<\alpha<1$ and  $\vm=(m_1, \ldots, m_d)\in \Z^d$, we define
$$
\cE_{\alpha, \vm}=\{\vx\in \Tor:~|S_{\vm}(\vx; N)|\geqslant N^{\alpha} \text{ for infinitely many } N\in \N\}.
$$
We note that~\eqref{eq:Mord} can easily be extended to sparse polynomials
\begin{equation}  
\label{eq:Mord-sparse}
\sum_ {\va \in \F_p^d} \left| \sum_{n=1}^{p}\ep\(a_1 n^{m_1}+\ldots +a_d n^{m_d} \)\right|^{2d} \ge  d! p^{2d} + O(p^{2d-1}),
\end{equation}
 which in turn leads to full analogues of 
Lemmas~\ref{lem:LB Dens}, \ref{lem:k}  and~\ref{lem:box dense}.  
Then we have the following direct generalisations of Theorems~\ref{thm:Baire} and~\ref{thm:dimension}
which can be obtained at the cost of essentially only typographical changes in their proofs.
For each $0<\alpha<1$ and  $\vm=(m_1, \ldots, m_d)\in \Z^n$ with $1\leqslant m_1<m_2<\ldots<m_d$, 
\begin{itemize}
\item[\bf{(A)}] \ the subset $\Tor \setminus \cE_{\alpha, \vm}$ is of  the first Baire   category; 

\item[\bf{(B)}]  \  for any cube  $\fQ\subseteq \T_d$ we have,  
$$
\dim \cE_{\alpha, \vm}(\fQ) \geqslant \min\left\{ \frac{d \kappa_d}{m_d}, \frac{2d \kappa_d(1-\alpha)}{m_d} \right\}, 
$$
\end{itemize} 
where  $\kappa_d$ is given by~\eqref{eq:kappa}. Note that 
we recover the bound of  Theorem~\ref{thm:dimension} (for $d\geqslant 3$)  provided $m_d=d$.

\begin{remark} We note that~\eqref{eq:Mord-sparse} is only a lower bound rather than an
asymptotic formula as~\eqref{eq:Mord}.  In fact, most likely an asymptotic form 
of~\eqref{eq:Mord-sparse}  holds with $m_1\ldots m_d$ instead of $d!$, see~\cite{Wool1}. However this is inconsequential for our results.
\end{remark} 

We note that it is natural to try to improve  Theorem~\ref{thm:dimension} via an appropriate 
version of Lemma~\ref{lem:approx-mon} for arbitrary polynomials. Unfortunately the only known 
result in this direction~\cite[Theorem~7.2]{Vau} is not strong enough to lead to such an 
improvement. 

%

\subsection{Further questions about the structure of  Weyl sums}  
For $\vx\in \T$ we  now define   
\begin{align*}
\sigma(\vx)&=\inf\{s>0:~S_d(\vx; N)\ll N^{s}\}\\
&=\sup\{s>0:~|S_d(\vx; N)|\gg N^{s} \text{ for infinitely many } N\in \N\}\\ 
&=\sup\{s>0:~|S_d(\vx; N)|\geqslant N^{s} \text{ for infinitely many } N\in \N\}. 
\end{align*}
Alternatively, we may also define 
\begin{equation}\label{eq:sigma}
\sigma(\vx)=\limsup_{N\rightarrow\infty}\frac{\log |S_d(\vx; N)|}{\log N}.
\end{equation}

By the definition we have   
$$\cE_{\alpha, d} \subseteq \{\vx\in \Tor:~\sigma(\vx)\geqslant \alpha\}.
$$

For each $0\leqslant \alpha\leqslant 1$ we define the level set
$$
\Omega_\alpha=\{\vx\in \Tor:~\sigma(\vx)=\alpha\}.
$$
Clearly these sets $\Omega_\alpha$ form a decomposition of $\Tor$.  There are several
natural questions about these sets.  Note that Conjecture~\ref{conj:1/2} asserts 
that for any  $\alpha\in (1/2, 1]$ we have $\lambda (\Omega_\alpha)=0$. We may make the
following stronger conjecture.

\begin{conj}
\label{conj:lambda_Omega}
For $\alpha\in [0, 1]$ we have 
$$
\lambda(\Omega_\alpha) =  
  \begin{cases}
   0  &  \text{for} \ \alpha \neq 1/2, \\
   1 &  \text{for} \ \alpha =1/2.
  \end{cases}
$$
\end{conj}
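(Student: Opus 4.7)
The statement splits into two halves: an \emph{upper} half $\lambda(\Omega_\alpha)=0$ for $\alpha\ne 1/2$, and a \emph{lower} half $\lambda(\Omega_{1/2})=1$. The upper half is essentially free from the work already present in the paper. Indeed, by the Menshov--Rademacher bound~\eqref{eq:M-R}, we have $\sigma(\vx)\leqslant 1/2$ for almost every $\vx\in\Tor$, so $\lambda(\Omega_\alpha)=0$ for every $\alpha\in(1/2,1]$. The problem therefore reduces to proving the matching lower bound $\sigma(\vx)\geqslant 1/2$ almost surely, which would force $\lambda(\Omega_\alpha)=0$ for $\alpha\in[0,1/2)$ and, combined with the upper half, give $\lambda(\Omega_{1/2})=1$.

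The plan for the almost-sure lower bound is a Paley--Zygmund plus Borel--Cantelli argument along a lacunary scale. Fix a small $\varepsilon>0$ and for each integer $N$ set
$$
E_N=\bigl\{\vx\in\Tor:~|S_d(\vx;N)|\geqslant N^{1/2-\varepsilon}\bigr\}.
$$
The Parseval identity gives $\int_{\Tor}|S_d(\vx;N)|^2 d\vx = N$, and the Vinogradov mean value theorem in the form used in the paper gives the matching upper bound $\int_{\Tor}|S_d(\vx;N)|^{2s(d)}d\vx \leqslant N^{s(d)+o(1)}$. Interpolating (or applying Paley--Zygmund directly with the $L^2$ and $L^{2s(d)}$ norms) yields a uniform lower bound
$$
\lambda(E_N) \geqslant c(\varepsilon,d)>0 \qquad\text{for all sufficiently large } N.
$$
Choose a rapidly increasing sequence $N_1<N_2<\ldots$ and consider the events $E_{N_k}$. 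Since $\sum_k\lambda(E_{N_k})=\infty$, by Kochen--Stone it suffices to control the pairwise correlations
$$
\lambda(E_{N_j}\cap E_{N_k})-\lambda(E_{N_j})\lambda(E_{N_k})
$$
and show they are summable, or at least that $\sum_{j,k\leqslant K}\lambda(E_{N_j}\cap E_{N_k})\ll\bigl(\sum_{k\leqslant K}\lambda(E_{N_k})\bigr)^2$.

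The main obstacle is precisely this decorrelation step. For $N_k$ growing geometrically one would hope, by expanding $|S_d(\vx;N_j)|^2|S_d(\vx;N_k)|^2$ and integrating term by term, to reduce the mixed moment to a diagonal main term $N_jN_k$ plus an error controlled by a mean value estimate for shifted Vinogradov systems; such an estimate is available (again via~\cite{BDG,Wool2,Wool5}) but the shift structure is delicate, and without \emph{some} quantitative mixing this line of attack collapses. A cleaner alternative would be to prove an honest central limit theorem for $S_d(\vx;N)/\sqrt{N}$, say along a sparse subsequence, but this is an open problem in its own right and is probably of comparable depth to the conjecture itself. In short, the upper half is immediate and the structural Paley--Zygmund step is routine; the one real task is extracting enough independence between Weyl sums at widely separated values of $N$ to run a second Borel--Cantelli lemma, and it is this step that the authors (rightly, in my view) leave as a conjecture rather than a theorem.
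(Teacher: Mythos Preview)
The statement is a \emph{conjecture} in the paper, not a theorem; the authors give no proof. Your proposal is not a proof either, and you say so explicitly at the end, so in that sense there is no discrepancy to report.

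That said, it is worth comparing your heuristic with the paper's own discussion. In Section~5.4 the authors sketch essentially the same strategy for the closely related Conjecture~\ref{conj:1/2}: define the level sets $\cA_{d,\alpha,i}=\{\vx:|S_d(\vx;i)|\geqslant i^\alpha\}$, use Parseval to get $\lambda(\cA_{d,\alpha,i})\gg 1/i$ and hence $\sum_i\lambda(\cA_{d,\alpha,i})=\infty$, and then observe that a quasi-independence estimate $\lambda(\cA_{d,\alpha,i}\cap\cA_{d,\alpha,j})\leqslant C\lambda(\cA_{d,\alpha,i})\lambda(\cA_{d,\alpha,j})$ would, via a quantitative Borel--Cantelli lemma, yield positive measure for the $\limsup$ set. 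Your version improves the single-scale input by invoking Paley--Zygmund together with the Vinogradov mean value theorem to obtain a \emph{uniform} lower bound $\lambda(E_N)\geqslant c(\varepsilon,d)>0$ rather than the paper's $\gg 1/N$; this is a genuine (and correct) sharpening, and it means one can work along a lacunary sequence rather than the full integers. But the decisive obstacle is identical in both accounts: one needs a decorrelation estimate between $|S_d(\cdot;N_j)|$ and $|S_d(\cdot;N_k)|$ for $j\ne k$, and neither you nor the paper supplies one. Your remark that this would follow from control of mixed moments via shifted Vinogradov systems, or from a CLT for $S_d(\vx;N)/\sqrt N$, is reasonable but speculative; the paper makes no such claim and simply records the gap.

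In summary: your upper half ($\alpha>1/2$) is correct and immediate from~\eqref{eq:M-R}; your Paley--Zygmund step is correct and slightly sharper than what the paper writes down; and you correctly identify the missing decorrelation step as the reason the statement remains a conjecture.
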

 
We may also use the Hausdorff dimension to measure the size of $\Omega_\alpha$.

\begin{question}
 What is the Hausdorff dimension $\dim \Omega_\alpha$ of $\Omega_\alpha$?  
\end{question}

Finally, one can also ask whether the function $\sigma(\vx)$ which  is defined by~\eqref{eq:sigma} has multifractal structure. More precisely  we ask the following: 

\begin{question}
Does there exist a set $\cA\subseteq [0,1]$ with $\lambda(\cA)>0$  such that for any $\alpha \in \cA$  we have 
$$
\dim \Omega_\alpha >0?
$$
\end{question}

\subsection{Further questions about the distribution of large complete rational  sums and possible improvements of  Theorem~\ref{thm:dimension} }

It is certainly natural to consider more general transformations 
\begin{equation}
\label{eq:trans}
f(X) \mapsto f(\lambda X + \mu), \qquad (\lambda,\mu) \in \F_p^* \times \F_p, 
\end{equation}
instead of just $f(X) \mapsto f(\lambda X)$ which is essentially used in the proof of Lemma~\ref{lem:box dense}. 
The transformation~\eqref{eq:trans}  is very similar to the transformation  $f(X) \mapsto \lambda^{-d}f(\lambda X + \mu)$ used in 
 the proof of~\cite[Lemma~4]{Shp}. However, while in~\cite{Shp} the Deligne bound (see~\cite[Section~11.11]{IwKow})
 is applied to the corresponding double exponential sums with a polynomials in  $\lambda$ and $\mu$, in the case of~\eqref{eq:trans} these polynomials are singular, and so the Deligne bound
 does not apply. It is certainly interesting to find an alternative way, and thus improve  Lemma~\ref{lem:box dense}, in which $\kappa_d$ 
 can possibly be replaced with $1/d$.

Lemma~\ref{lem:k} study the distribution of sets
$$
\{(\lambda a_1, \ldots, \lambda^{d}a_d):~\lambda\in \F_p^{*}\}, 
$$
where $a_j\in \F_p^{*}$ for each $j=1, \ldots, d$.  Lemma~\ref{lem:k} asserts that for any box $\fB$ of $\F_p^{d}$  with the side length $L\geqslant C p^{1-1/2d} \log p$ for some large constant $C$ there exists $\lambda \in \F_p^{*}$ such that 
$$
(\lambda a_1, \ldots, \lambda^{d}a_d)\in \fB.
$$
Note that there are totally  $p-1$ vectors 
$$ 
(\lambda a_1, \ldots, \lambda^{d}a_d), \qquad  \lambda\in \F_p^{*},
$$ 
thus the  smallest  $L$ in Lemma~\ref{lem:k} should be  
$$
L\gg p^{1-1/d}.
$$
One could ask that is this a sufficient condition.  

\begin{question}
Let $(a_1, \ldots, a_d)\in (\F_p^{*})^{d}$. Is it true that for any $\varepsilon>0$ there exists a constant $C_\varepsilon$ such that any box $\fB$ of $\F_p^{d} $ with the side length $L\geqslant C_\varepsilon p^{1-1/d+\varepsilon}$  contains a vector $(\lambda a_1, \ldots, \lambda^{d}a_d)$ for some $\lambda \in \F_p^{*}$?
\end{question}

It is also interesting to consider the special case that is the distribution of 
$$
\left\{(\lambda, \lambda^{2}):~\lambda \in \F_p^{*}\right\}. 
$$
Note that studying the distribution of 
$$
\{\lambda^{2}:~\lambda\in \F_p^{*}\}
$$
is already  an interesting and hard problem related to the distribution of  quadratic nonresidues.

A possible approach to improving  Theorem~\ref{thm:dimension}  is via finding an asymptotic formula or at least a lower bound 
for the average of $T_{d, p}(\va)$ over small box $\fB$ as in~\eqref{eq:box}. In fact finding lower bounds for the moments 
$$
\sM_{\nu,d}(\fB) = \sum_{\substack{\va\in \fB\\\va \ne \0}} |T_{d, p}(\va)|^{2\nu}, \qquad \nu =1,  \ldots, d, 
$$
of nontrivial sums with $\va \ne \0$  
is of independent interest.  For $\fB=\F_p^d$ one can easily extend the result of Mordell~\cite{Mord}, 
that is,~\eqref{eq:Mord}, to any $\nu =1,  \ldots, d$ and obtain 
\begin{equation}  
\label{eq: MFq Ad}
\sM_{\nu,d}(\F_p^d) =  A_d(\nu)  p^{d+\nu} + O\( p^{d+\nu-1}\),  
\end{equation}
where 
$$
A_d(\nu)= 
\begin{cases}
d!-1, & \text{for} \ \nu = d,\\
\nu! ,   & \text{for} \ \nu =1,  \ldots, d-1,
\end{cases}
$$
see also~\cite[Equation~(2)]{KnSok2}.

Using the same arguments as in the proof of 
Lemmas~\ref{lem:k} and~\ref{lem:box dense} with $k=d$, one can obtain an
asymptotic formula 
\begin{equation}  
\label{eq: MB MFq}
\sM_{\nu,d}(\fB)   =  A_d(\nu)  L^d p^\nu + O\(p^{d+\nu-1} L^{1/2}( \log p)^{d-1}\),  
\end{equation}
see Appendix~\ref{app:MB MFq},
which is nontrivial in the case of cubes with the side length $L \geqslant p^{2(d-1)/(2d-1)+ \varepsilon}$ for any 
fixed $\varepsilon > 0$. 
 However we are interested in much smaller boxes, for example of size of the side length 
about  $L \sim p^{1/2 + \varepsilon}$. In fact, a lower bound of the form $L^d p^{\nu +o(1)}$ 
for any fixed $\nu$ is sufficient for our applications. 
%

\subsection{An approach to Conjecture~\ref{conj:1/2}} 

Recall that Conjecture~\ref{conj:1/2} asserts that  $\vartheta_d=1/2$ for each integer $d\geqslant 2$,  and the 
 bound~\eqref{eq:M-R} gives $\vartheta_d\leqslant1/2$.  Thus  it is sufficient to prove that for any $0<\alpha<1/2$ one has 
$\lambda(\cE_{\alpha,d})>0$.

For  $0<\alpha<1/2$ and integer $d\geqslant 2$ we define  
$$
\cA_{d, \alpha, i}=\{\vx\in \Tor:~|S(\vx; i)|\geqslant i^{\alpha}\}.
$$ 
We can write 
$$
\cE_{\alpha,d} =\bigcap_{k=1}^{\infty}\bigcup_{i=k}^{\infty} \cA_{\alpha, d, i}.
$$

\begin{lemma}
Let $0<\alpha<1/2$ then  $\lambda(\cA_{d, \alpha, i})\gg 1/i$, and  hence 
\begin{equation}
\label{eq:divergence}
\sum_{i=1}^{\infty} \lambda(\cA_{d, \alpha, i})=\infty.
\end{equation}
\end{lemma}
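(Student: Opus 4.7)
The plan is to establish the pointwise lower bound $\lambda(\cA_{d,\alpha,i}) \gg 1/i$ by a simple second-moment (reverse Markov) argument, using only Parseval and the trivial bound $|S_d(\vx;i)| \leqslant i$. The divergence~\eqref{eq:divergence} will then follow immediately from the divergence of the harmonic series.

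More precisely, I would set $Y(\vx) = |S_d(\vx;i)|^2$, so that by Parseval
$$
\int_{\Tor} Y(\vx)\, d\vx = i,
$$
while trivially $0 \leqslant Y(\vx) \leqslant i^2$. The key ingredient is the elementary one-sided Chebyshev inequality: for any nonnegative random variable $Y$ with $Y \leqslant M$ almost surely, mean $\mu$, and any $0 < t < \mu$, one has
$$
\lambda\bigl(\{Y \geqslant t\}\bigr) \geqslant \frac{\mu - t}{M - t},
$$
which follows at once from $Y \leqslant M \cdot \mathbf{1}_{\{Y \geqslant t\}} + t \cdot \mathbf{1}_{\{Y < t\}}$ upon integration. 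Applying this with $M = i^2$, $\mu = i$ and $t = i^{2\alpha}$ (which satisfies $t < \mu$ for all sufficiently large $i$ since $\alpha < 1/2$), I would obtain
$$
\lambda(\cA_{d,\alpha,i}) = \lambda\bigl(\{Y \geqslant i^{2\alpha}\}\bigr) \geqslant \frac{i - i^{2\alpha}}{i^2 - i^{2\alpha}} = \frac{1 - i^{2\alpha-1}}{i(1 - i^{2\alpha-2})} \sim \frac{1}{i}, \qquad i \to \infty.
$$

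This yields $\lambda(\cA_{d,\alpha,i}) \gg 1/i$ for all sufficiently large $i$, and hence $\sum_{i=1}^{\infty} \lambda(\cA_{d,\alpha,i}) = \infty$, which is exactly~\eqref{eq:divergence}. There is no real obstacle here: the statement is essentially the qualitative sharpness of the trivial Parseval/Chebyshev bound at the scale $N^{1/2}$, and the only place the hypothesis $\alpha < 1/2$ is used is to guarantee that the threshold $i^{2\alpha}$ is eventually strictly below the mean $i$, so that the reverse Markov inequality gives a nontrivial lower bound. One could instead invoke Paley--Zygmund and the fourth-moment bound $\int |S_d|^4 \ll i^{2+o(1)}$ (available from the Vinogradov mean value theorem for $d \geqslant 2$), but the reverse Markov approach is cleaner because it needs no input beyond Parseval and the trivial sup-norm bound.
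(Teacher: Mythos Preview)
Your proof is correct and is essentially the same argument as the paper's: the paper splits $\int_{\Tor}|S_d(\vx;i)|^2\,d\vx = i$ over $\cA_{d,\alpha,i}$ and its complement, bounding the former by $i^2\lambda(\cA_{d,\alpha,i})$ and the latter by $i^{2\alpha}$, which is exactly your reverse Markov inequality written out explicitly. The only cosmetic difference is that you retain the factor $1-\lambda(\cA_{d,\alpha,i})$ on the complement, yielding the slightly sharper constant $(i-i^{2\alpha})/(i^2-i^{2\alpha})$ versus the paper's $(i-i^{2\alpha})/i^2$, but both give $\gg 1/i$.
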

\begin{proof}
Applying the trivial bound $|S(\vx; i)|\leqslant i$ we obtain 
\begin{align*}
\int_{\Tor} |S(\vx; i)|^{2} d\vx&=\int_{\cA_{\alpha, d, i}} |S(\vx; i)|^{2} d\vx+\int_{\Tor \setminus \cA_{\alpha, d, i}}|S(\vx; i)|^{2} d\vx\\
&\leqslant i^{2} \lambda(E_i)+i^{2\alpha}.
\end{align*}
Combining with Parseval identity 
$$
\int_{\Tor} |S(\vx; i)|^{2} d\vx =i  
$$
and the condition $0<\alpha<1/2$, we obtain the desired result.
\end{proof}

Suppose that  the sets $\cA_{\alpha, d, i}$ are pair independent with respect to the  Lebesgue measure $\lambda$, i.e., for any $i\neq j$ we  have 
$$
\lambda(\cA_{\alpha, d, i} \cap \cA_{\alpha, d, j})=\lambda(\cA_{\alpha, d, i}) \lambda(\cA_{\alpha, d, j}),
$$
then the {\it Borel-Cantelli lemma\/} and~\eqref{eq:divergence} implies that $\lambda(\cE_{\alpha, d})=1$. Surely the  pair  independent  assumption is not true,  and an ordinary way to overcome this is by the following arguments. One first show that these sets are weak independent, that is there exists some constant $C>0$ such that for any $i\neq j$ we have 
$$
\lambda(\cA_{\alpha, d, i} \cap \cA_{\alpha, d, j})\leqslant C\lambda(\cA_{\alpha, d, i}) \lambda(\cA_{\alpha, d, j}),
$$
then a variant of the Borel-Cantelli lemma gives  
$$
\lambda(\cE_{\alpha, d})\geqslant 1/C>0.
$$
Secondly one may use a zero-one law to pass from $\lambda(\cE_{\alpha, d})>0$ to $\lambda(\cE_{\alpha, d})=1$.


%
%

\appendix

\section{Proof of the bound~\eqref{eq:M-R} and some extensions}   
\label{app:A}

%
%
%
%
%
%
%

%

By  applying a very special case of the {\it Menshov--Rademacher  theorem\/}, see~\cite[p. 251]{KaSa} for the general statement,  we conclude that   if for some sequence $c_n, n\in \N$ of complex numbers
we have 
\begin{equation}
\label{eq:converge}
\sum_{n=1}^\infty |c_n|^2 (\log n)^{2}<\infty,
\end{equation}
then the series 
$$
\sum_{n=1}^\infty c_n \e(nx)
$$ 
converges for almost all $x \in [0,1)$. 

For $\vx=(x_1, \ldots, x_d)\in \Tor$ we have 
$$
\e(x_1n+\ldots+x_dn^d)=\e(x_1n)\e(x_2n^2+\ldots+x_dn^d).
$$
It follows that for any $(x_2, \ldots, x_d)\in \T_{d-1}$  the series 
$$
\sum_{n=1}^\infty c_n \e(x_2n^2+\ldots+x_dn^d) \e(x_1n)
$$
converges for almost all $x_1\in [0,1)$. Together with the 
{\it Fubini theorem\/},  we obtain 
that the series 
$$
\sum_{n=1}^\infty c_n \e(x_1n+\ldots+x_dn^d) 
$$
converges for almost all $\vx\in \Tor$.

%

Now we turn to the proof of~\eqref{eq:M-R}. 
We denote 
$$
\log^+ k = \max\{1, \log k\},
$$
and 
$$
\varphi_n(\vx)=\e(x_1n+\ldots+x_dn^d).
$$
Fix any $\gamma> 3/2$,  
and write 
$$
S_d(\vx; N)=\sum_{n=1}^N n^{-1/2}  (\log^+n)^{-\gamma} \varphi_n(\vx) n^{1/2}  (\log^+n)^{\gamma} .
$$
Then the summation by parts gives 
\begin{equation}
\begin{split}
\label{eq:sum}
S_d&(\vx; N)\\&=s_d(\vx;N) N^{1/2}  (\log^+ N)^{\gamma}\\
& \quad +\sum_{k=1}^{N-1}s_d(\vx;k) \(k^{1/2}( \log^+k )^{\gamma}  -(k+1)^{1/2}  (\log^+(k+1))^{\gamma}\), 
\end{split}
\end{equation}
where 
$$
s_d(\vx;k)=\sum_{n=1}^kn^{-1/2}  (\log^+n)^{-\gamma}\varphi_n(\vx).
$$
Since the condition~\eqref{eq:converge} is satisfied, 
for almost all $\vx\in \Tor$ there exits some positive   $B_{\vx}$ such that  for all $k\in \N$ we have
\begin{equation}
\label{eq:Bx}
|s_d(\vx;k)| \leqslant B_{\vx}.
\end{equation}
Substituting~\eqref{eq:Bx} in \eqref{eq:sum} we easily conclude that for almost all $\vx\in [0,1)$  
we have~\eqref{eq:M-R}.

We note that the above arguments implies that for any $(x_2, \ldots, x_d)\in \T_{d-1}$ the bound 
$$
\left| \sum_{n=1}^{N} \e\(x_1 n+\ldots + x_dn^d \) \right| \leqslant  N^{1/2} (\log N)^{3/2+o(1)}
$$
holds for almost all $x_1\in [0,1)$.

Furthermore, one can easily see that the above argument work in a much broader generality. 
For example, let $f_1, \ldots, f_d$ be $d$ functions such that for any $n\in \N$ we have $f_i(n)\in \Z$ 
for each $i=1, \ldots, d$. If one of these functions is  eventually strictly monotonic, then for almost all $(x_1,\ldots, x_d)\in \Tor$ we have
$$
\left| \sum_{n=1}^N \e\(x_1 f_{1}(n)+\ldots+x_d f_d(n)\) \right| \leqslant  N^{1/2} (\log N)^{3/2+o(1)}.
$$
For instance, for $0<t<\infty, a>1$ the bound 
$$
\left| \sum_{n=1}^N \e\(x_1 \fl{n^t}+ x_2 \fl{a^n} +x_3 \fl{\log n}\) \right| \leqslant  N^{1/2} (\log N)^{3/2+o(1)}
$$
holds for almost all $(x_1, x_2, x_3)\in \T_3$.

\begin{remark} 
For the case $d=2$ we can obtain the bound  $N^{1/2}\log N$ for the estimate \eqref{eq:M-R}  in a  different way.  
The {\it Khinchine theorem}, see~\cite[Introduction]{BDV},  implies that  for almost all irrational $x\in [0, 1)$  there 
exits some positive constant $c(x)$ such that for all rational $a/q$ with $\gcd(a, q)=1$ we have 
$$
\left|x-\frac{a}{q} \right|\geqslant \frac{c_x}{(q \log q)^{2}}.
$$
On the other hand, by~\cite[Theorem~8.1]{IwKow},  if $|x-a/q|\leqslant 1/qN$ with $\gcd(a, q)=1$ and  $1\leqslant q \leqslant N$ then for any $y\in [0, 1)$  one has 
$$
\sum_{n=1}^N \e(yn+x n^2) \ll N/q^{1/2}+q^{1/2}\log q. 
$$
Combining these two results, 
we conclude that for almost all $\vx \in \T_2$ one has 
$$
S_2(\vx; N)\ll N^{1/2}\log N.
$$
\end{remark}



\section{Moments of rational exponential sums over small boxes} 
\label{app:MB MFq}

Here we  sketch  a proof of~\eqref{eq: MB MFq}. Clearly we can assume that  
$0 \not \in \cI_1$ (it is easy see that by Lemma~\ref{lem:Weil} discarding $O(p^{d-1})$ 
such sums changes 
the value of  $\sM_{\nu,d}(\fB)$ by $O\(L^{d-1} p^{\nu}\)$, which 
can be absorbed in the error in~\eqref{eq: MB MFq}). In particular, we can assume  that $\0 \not \in \fB$.

Observe that for any $\lambda\in \F_p^{*}$ and $\vb \in \F_p^{d}$  we have 
$$
T_{d, p}(\vb)=T_{d, p}(\lambda\circ \vb), 
$$
where 
$$
\lambda \circ \vb =\left( \lambda b_1, \ldots, \lambda^{d} b_d\right).
$$
It follows that 
\begin{equation} \label{eq:T}
\begin{split}
\sM_{\nu,d}(\fB)&=\frac{1}{p-1}\sum_{\lambda\in \F_p^{*}}\sum_{\vb\in \fB}|T_{d, p}(\lambda \circ\vb)|^{2d}\\
& =\frac{1}{p-1} \sum_{\substack{\va\in \F_p^{d}\\\va \ne \0}}
 N(\va)|T_{d, p}(\va)|^{2\nu},
\end{split}
\end{equation} 
where
$$
N(\va)=\#\{(\lambda, \vb)\in \F_p^{*}\times \fB: \lambda\circ \vb =\va\}.
$$
Let $\Lambda(a)$ be the set of $\lambda\in \F_p^{*}$ with $a\lambda \in \cI_1$
where  $\fB$ is as in~\eqref{eq:box}.
Hence for $\va = (a_1, \ldots, a_d)$, we have 
$$
N(\va)= \#\{\lambda \in \Lambda(a_1):~ \left( \lambda^2 a_2, \ldots, \lambda^{d} a_d\right) \in \cI_2\times \ldots \times \cI_d\}.
$$
By the orthogonality of characters, and then changing the order of summation 
and separating the contribution from $h_2=\ldots = h_d$ we obtain    \begin{equation} \label{eq:N R}
\begin{split}
N(\va)&=\frac{1}{p^{d-1}}\sum_{\lambda \in \Lambda(a_1) } \sum_{y_2\in \cI_2} \ldots \sum_{y_d \in \cI_d}\\
& \qquad \qquad \qquad
\sum_{h_2, \ldots, h_d = -(p-1)/2}^{(p-1)/2} \ep\(\sum_{j=2}^d h_j( \lambda^j a_j - y_j)\)\\
&= \frac{\# \Lambda(a_1) L^{d-1}  }{p^{d-1}} + R(\va), 
\end{split}
\end{equation}
where 
\begin{align*}
R(\va)= \frac{1}{p^{d-1}}  \sum_{\substack{h_2, \ldots, h_d = -(p-1)/2\\(h_2, \ldots, h_d) \ne \0}}^{(p-1)/2} 
& \prod_{i=2}^d
\left| \sum_{y_j\in \cI_i} \ep\(h_j y_j\) \right|\\
& \qquad \qquad \quad 
\left|  \sum_{\lambda \in \Lambda(a_1) }
\ep\(\sum_{j=2}^d   h_j \lambda^j a_j \) \right| .
\end{align*}
We note that $N(\va)=0$ if the first coordinate  of $\va$ is zero. Combining~\eqref{eq:T} and~\eqref{eq:N R},  we obtain
$$
\sM_{\nu,d}(\fB)= \frac{ L^{d-1} }{(p-1)p^{d-1}}
\sum_{\substack{\va\in \F_p^{d}\\ a_1 \ne 0}}\# \Lambda(a_1) |T_{d, p}(\va)|^{2\nu} 
+ O\(W\) , 
$$
where 
$$
W = \frac{1}{p-1}\sum_{a_1\neq 0} |R(\va)|T_{d, p}(\va)|^{2\nu}. 
$$
By Lemma~\ref{lem:Weil} we obtain 
\begin{align*}
\frac{ L^{d-1} }{(p-1)p^{d-1}}
\sum_{\substack{\va\in \F_p^{d}\\ a_1 \ne 0}}&\# \Lambda(a_1) |T_{d, p}(\va)|^{2\nu}
= \frac{ L^{d} }{(p-1)p^{d-1}}
\sum_{\substack{\va\in \F_p^{d}\\ a_1 \ne 0}} |T_{d, p}(\va)|^{2\nu}\\
& =  \frac{ L^{d} }{(p-1)p^{d-1}} \sM_{\nu,d}(\F_p^d)+O\(L^dp^{\nu-1}\)\\ 
& =  \frac{ L^{d} }{p^{d}} \sM_{\nu,d}(\F_p^d)+O\(  L^d p^{-d-1}\sM_{\nu,d}(\F_p^d) + L^dp^{\nu-1}\).
\end{align*} 
Hence, recalling~\eqref{eq: MFq Ad} we obtain 
\begin{equation} \label{eq:MMW}
\sM_{\nu,d}(\fB)= A_d(\nu)   L^{d}    p^{\nu} +O\(L^dp^{\nu-1} + W\).
\end{equation}

To estimate $W$ we note that by~\cite[Equation~(8.6)]{IwKow} we have
$$
R(\va)\ll  \sum_{\substack{h_2, \ldots, h_d = -(p-1)/2\\(h_2, \ldots, h_d) \ne \0}}^{(p-1)/2} 
\prod_{i=2}^d  \min \left\{\frac{L}{p},\frac{1}{|h_i|} \right\}
\left|  \sum_{\lambda \in \Lambda(a_1) }
\ep\(\sum_{j=2}^d  a_j h_j \lambda^j \) \right| .
$$
By  Lemma~\ref{lem:Weil} we now see that 
\begin{align*}
W \ll  p^{\nu -1}\sum_{a_1\in \F_p^*}  \sum_{\substack{h_2, \ldots, h_d = -(p-1)/2\\(h_2, \ldots, h_d) \ne \0}}^{(p-1)/2} &
\prod_{i=2}^d \min \left\{\frac{L}{p},\frac{1}{|h_i|} \right\}
\\
&
\sum_{a_2, \ldots, a_d \in \F_p}  \left|  \sum_{\lambda \in \Lambda(a_1) }
\ep\(\sum_{j=2}^d  a_j h_j \lambda^j \) \right| .
\end{align*}

Using the Cauchy inequality, as in the proof of Lemma~\ref{lem:box dense},  we have
\begin{align*}
&\(\sum_{a_2, \ldots, a_d \in \F_p}  \left|  \sum_{\lambda \in \Lambda(a_1) }
\ep\(\sum_{j=2}^d  a_j h_j \lambda^j \) \right| \)^2\\
& \qquad\qquad\quad  \leqslant p^{d-1}  \sum_{a_2, \ldots, a_d \in \F_p}  \left|  \sum_{\lambda \in \Lambda(a_1) }
\ep\(\sum_{j=2}^d  a_j h_j \lambda^j \) \right|^2\ll p^{2(d-1)}  L. 
\end{align*}
Hence,
\begin{align*}
W & \ll  p^{\nu +d -2}L^{1/2} \sum_{a_1\in \F_p^*}  \sum_{\substack{h_2, \ldots, h_d = -(p-1)/2\\(h_2, \ldots, h_d) \ne \0}}^{(p-1)/2} 
\prod_{i=2}^d \min \left\{\frac{L}{p},\frac{1}{|h_i|} \right\}\\
&\leqslant  p^{\nu +d -1}L^{1/2}   \sum_{\substack{h_2, \ldots, h_d = -(p-1)/2\\(h_2, \ldots, h_d) \ne \0}}^{(p-1)/2} 
\prod_{i=2}^d \min \left\{\frac{L}{p},\frac{1}{|h_i|} \right\} \\
& \ll  p^{\nu +d -1}L^{1/2} (\log p)^{d-1}, 
\end{align*}
which together with~\eqref{eq:MMW} yields~\eqref{eq: MB MFq}.

\section*{Acknowledgement}

The authors are grateful to Fernando Chamizo,  Boris Kashin, Bryce Kerr, Sergei Konyagin and Trevor Wooley for  helpful advice and discussions.

This work was  supported in part  by ARC Grant~DP170100786.

\end{document}